\documentclass[10pt]{article}

\usepackage{amsmath,amssymb,amsthm,mathrsfs,dsfont,graphicx}

\usepackage[margin=3cm]{geometry} 

\usepackage{titlesec,hyperref}
\usepackage{esint}
\usepackage{natbib}
\usepackage{color}

\usepackage{fancyhdr}
\titleformat{\subsection}{\it}{\thesubsection.\enspace}{1pt}{}

\newtheorem{theo}{Theorem}[section]

\newtheorem{rema}[theo]{Remark}

\newtheorem{lemm}[theo]{Lemma}
\newtheorem{defi}[theo]{Definition}

\newtheorem{prop}[theo]{Proposition}

\numberwithin{equation}{section}

\allowdisplaybreaks 

\begin{document}

\title{Non-uniqueness of smooth solutions of the 5D
magnetohydrodynamic equations from critical data
\hspace{-4mm}
}

\author{Zipeng $\mbox{Chen}^1$ \footnote{Email: chenzp26@mail2.sysu.edu.cn},\quad Song $\mbox{Liu}^1$ \footnote{Email: lius37@mail2.sysu.edu.cn}\quad
	 and\quad
	Zhaoyang $\mbox{Yin}^{1}$\footnote{E-mail: mcsyzy@mail.sysu.edu.cn}\\
    $^1\mbox{School}$ of Science,\\ Shenzhen Campus of Sun Yat-sen University, Shenzhen 518107, China}
 
\date{}
\maketitle
\hrule

\begin{abstract}
Recently, Coiculescu and Palasek \cite{Coiculescu2025} shows the non-uniqueness of solutions for the 3D incompressible Navier-Stokes equations with initial data in $BMO^{-1}$. Inspired by their breakthrough work, we develop their schemes for the incompressible magnetohydrodynamic equations and obtain a similar result in 5 dimensional case. More precisely, we construct two distinct global solutions with a initial data, which has nonvanishing velocity and magnetic fields in $BMO^{-1}(\mathbb{T}^5)$. 
\end{abstract}
\noindent {\sl Keywords:}  , Non-uniqueness, MHD equations, $BMO^{-1}$

\vskip 0.2cm

\noindent {\sl AMS Subject Classification:} 35Q30, 76D03  \

\vspace*{10pt}

\tableofcontents

\section{Introduction }

  In this paper, we consider the following $5$-dimensional  magnetohydrodynamic equation:
\begin{equation}\label{e:MHDe}
\begin{cases}
\partial_tu-\Delta u+ \text{div}(u\otimes u-b\otimes b)+\nabla p=0, \quad\quad \\
\partial_tb-\Delta b+ \text{div}(u\otimes b-b\otimes u)=0, \\
\text{div}\,u=\text{div}\,b=0,
\quad\forall(t,x)\in [0,\infty)\times\mathbb{T}^5,
\end{cases}
\end{equation}
where $\mathbb{T}^5$ is the $5$-dimensional toru. Here, $v,p,b$ represent the flow velocity, the scalar pressure, and the magnetic field, respectively.The magnetohydrodynamics equation describes the dynamic coupling between electrically conducting fluids and magnetic fields, unifying the principles of hydrodynamics and electromagnetism. Originating from astrophysical and geophysical studies, MHD equations model the motion of ionized fluids (plasmas, liquid metals, saltwater) where fluid flow induces magnetic field changes, and magnetic forces in turn drive fluid motion \cite{MHDphy1,MHDphy2}.

When $b = 0$, system (\ref{e:MHDe}) is reduced to the famous incompressible Navier–Stokes equations:
\begin{equation}\label{e:NS}
\begin{cases}
\partial_tv+ \text{div}(v\otimes v)+\nabla p-\Delta v=0, \\
\text{div}\,v=0,
\end{cases}
\end{equation}
In the remarkable paper \cite{leray}, for any dimension $d\geq2$, Leray first showed that there exists a weak solution in $L^\infty(\mathbb{R}^+;L^2(\mathbb{R}^d))\cap L^2(\mathbb{R}^+;\dot{H}^1(\mathbb{R}^d))$ for any $L^2$ solenoidal initial data, which satisfies the energy inequality
\begin{gather*}
    \|v(t)\|^2_{L^2}+2\int^t_0\|\nabla v(s)\|^2_{L^2}ds\leq\|v(0)\|^2_{L^2},\quad\forall t\geq0.
\end{gather*}
For smooth bounded domains with Dirichlet boundary conditions, Hopf \cite{hopf} derived an analogous result. This class of weak solutions is now known as Leray–Hopf weak solutions. For MHD equations, this class of solutions has also been derived by Sermange and Temam \cite{MHDleray}.
To this day, the uniqueness problem for Leray–Hopf weak solutions of the Navier–Stokes equations remains an open question in dimensions greater than two. 

By the mild formulation of equations, the global well-posedness for small data and local well-posedness for large data are derived in some critical space 
\begin{gather*}
    H^{\frac{d}{2}-1}\subset L^d\subset B^{-1+\frac{d}{p}}_{p,\infty}\subset BMO^{-1}\subset B^{-1}_{\infty,\infty}
\end{gather*}
where $1\leq p<\infty$. These norms are all invariant under the following scaling transformations:
\begin{gather*}
    u(t,x)\mapsto\lambda u(\lambda^2t,\lambda x ),\quad p(t,x)\mapsto\lambda^2p(\lambda^2t,\lambda x).
\end{gather*}
Fujita-Kato\cite{30} and Kato\cite{40} proved that this holds for initial data in the first two spaces, respectively. A corresponding result in Besov spaces $B^{-1+\frac{d}{p}}_{p,\infty}(\mathbb{R}^d)(1\leq q<\infty)$ was obtained by Cannone \cite{13} and Planchon \cite{53}. Koch-Tataru\cite{BMO-1wp} established that global well-posedness holds for small data in $BMO^{-1}$. For the MHD equations, we refer the reader to \cite{criticalMHD1,criticalMHD2,criticalMHD3} for well-posedness results on critical spaces. The weak-strong uniqueness property for (\ref{e:NS}) or (\ref{e:MHDe}) in critical spaces has also attracted considerable attention in the literature. We refer the reader to the references \cite{kozono,lady,Prodi,serrin} on NS equations and \cite{MHD1,MHD2,MHD3,MHD4} on MHD equations.

Nevertheless, ill-posedness or non-uniqueness will also arise in the critical class. For the largest critical space $B^{-1}_{\infty,\infty}$, Bourgain-Pavlovi\'c \cite{Bourgain} showed that norm inflation is possible in 3D Navier-Stokes equations. In other words, the solutions with arbitrarily small data in $B^{-1}_{\infty,\infty}$ can grow arbitrarily large in a short time. On the other hand, Germain \cite{33} proved that the data-to-solution map is not $C^2$ in the spaces $B^{-1}_{\infty,q}$ for $q>2$ and Yoneda \cite{63} showed that ill-posedness holds in this class. Later in \cite{60}, Wang showed that norm inflation from small data may occur even in the spaces $B^{-1}_{\infty,q}$ for $q\in [1,2]$, which are continuously embedded into $BMO^{-1}$. Recently in \cite{Coiculescu2025}, Coiculescu-Palasek firstly constructed two distinct global solutions with identical initial data in the space $BMO^{-1}$. 

Driven by the development of convex integration techniques \cite{introductionconvex1,introductionconvex2,Isett3,B1,2donsager}, many non-uniqueness results have been obtained in supercritical spaces. The first such result is given by Buckmaster-Vicol \cite{ns有限能量不唯一}. By the convex integration scheme and the $L^2_x$-based intermittent spatial building block, they showed non-uniqueness of finite energy weak solutions to the 3D Navier-Stokes equations. See the surveys \cite{ns有限能量不唯一低于lion指标,buckmaster1} for the fractional dissipation case. Later In \cite{serrin准则luo}, Cheskidov-Luo used the temporal intermittency method and proved the non-uniqueness of (\ref{e:NS}) on $L^p_tL^\infty_x$ when $p<2$. Moreover, Cheskidov-Luo \cite{MR4610908} established the non-uniqueness on the class $L^{\infty}_tL^q_x$ when $q<2$ for the 2D NS equation. For the 3D hyperdissipative NS equation with hyperviscosity exponent beyond the Lion exponent, Li-Qu-Zeng-Zhang \cite{qupeng} showed non-uniqueness in some supercritical spaces. The above three results are all sharp with respect to the Lady\v{z}enskaja-Prodi-Serrin criteria and posed some additional favorable properties outside of singular times with arbitrarily Hausdorff dimension. However, due to the geometry of (\ref{e:MHDe}), the above non-uniqueness results in supercritical spaces may not completely extend to the case of the MHD equations. We refer the reader to \cite{yeMHD,MHDzeng0,MHDzeng1,MHDzeng2} for further details. 

We also refer to another programme by Jia and \v{S}ver\'ak \cite{jia2,jia1}. They demonstrated that non-uniqueness of Leray–Hopf weak solutions is valid, provided that a suitable spectral condition holds for the linearized Navier–Stokes operator. Inspired by \cite{vishik1,vishik2}, Albritton-Bru\'e-Colombo \cite{forcedns} proved non-uniqueness in the Leray-Hopf weak solutions for the forced 3D Navier-Stokes equations. See also the related work \cite{abchypodissipativens} for the hypodissipative Navier-Stokes equations in two dimensions.

\subsection{Main result}
In \cite{Coiculescu2025}, the authors demonstrated that uniqueness of the 3D Navier-Stokes equation may break down if the initial data is large in $BMO^{-1}$. Inspired by their work, we establish non-uniqueness for the 5D MHD equations. More precisely, we have
\begin{theo}\label{maintheo}
    There exists divergence-free initial data $(U^0,B^0)\in BMO^{-1}$ such that the MHD equations (\ref{e:MHDe}) admits two distinct global solutions
    \begin{align*}
        (\bar{v}^{(1)}, \bar{b}^{(1)}),(\bar{v}^{(2)}, \bar{b}^{(2)})\in &C^\infty_{t,x}((0,\infty)\times\mathbb{T}^5)\cap L^\infty([0,\infty);BMO^{-1}(\mathbb{T}^5))\\
        & \cap C^0([0,\infty);\dot{W}^{-1,p}(\mathbb{T}^5)).
    \end{align*}
\end{theo}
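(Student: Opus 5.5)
Following Coiculescu--Palasek, the plan is to build the two solutions through a backward-in-time iteration over dyadic time scales. Fix a rapidly growing frequency sequence $\lambda_q$ (say $\lambda_{q+1}=\lambda_q^{b}$), times $t_q\sim\lambda_q^{-2}$, and an amplitude $\lambda_q$ for the shear-type building blocks, so that each block is $O(1)$ in $BMO^{-1}$ but not small. We will construct a sequence of smooth solutions $(v_q,b_q)$ of (\ref{e:MHDe}) on $[t_q,\infty)\times\mathbb{T}^5$. Each $(v_q,b_q)$ is produced from $(v_{q-1},b_{q-1})$ by adding a high-frequency perturbation that is turned on near time $t_q$. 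Concretely, $v_q=v_{q-1}+w_q$ and $b_q=b_{q-1}+d_q$. Here $w_q,d_q$ are heat-flow-damped plane waves (or shear flows) in directions $\xi\in\mathbb{Z}^5$, with profile $\lambda_q\,a(x)\,\xi^\perp\sin(\lambda_q\xi\cdot x)e^{-\lambda_q^2|\xi|^2(t-t_q)}$. A shear flow in direction $\xi$ with amplitude in $\xi^\perp$ is an exact divergence-free, nonlinearity-free heat solution. The two solutions $(\bar v^{(1)},\bar b^{(1)})$ and $(\bar v^{(2)},\bar b^{(2)})$ will share the same initial datum $\sum_q(w_q,d_q)(t_q)$. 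They will differ because a chosen low mode is generated by the nonlinear self-interaction of the cascade in one scheme and cancelled (or produced with the opposite sign) in the other. The mechanism is the same as in \cite{Coiculescu2025}: at each step the low-frequency output of the quadratic interaction of the level-$q$ waves is designed to equal a prescribed forcing that drives the level-$(q-1)$ field.

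The key steps, in order, are as follows.

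First, set up the building blocks. We need families of pairs $(w,d)$ of shear flows whose quadratic interactions $w\otimes w-d\otimes d$ and $w\otimes d-d\otimes w$ produce a prescribed low-frequency symmetric tensor and a prescribed antisymmetric tensor, after averaging in the fast variable. This is where the dimension enters. In the MHD geometric lemma one must decompose an arbitrary pair (symmetric matrix near a multiple of the identity, antisymmetric matrix) into sums of $\xi_k\otimes\xi_k$ and $\xi_k\otimes\eta_k-\eta_k\otimes\xi_k$ with mutually orthogonal $\xi_k,\eta_k$. We also need distinct directions so that the different waves do not resonate. Five dimensions gives enough orthogonal room, so I would prove such a lemma by hand, in the style of the MHD convex-integration geometric lemmas.

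Second, perform a frequency-localised stability/gluing estimate. Write the equation for the correction $(\tilde v_q,\tilde b_q)$ needed to make $v_{q-1}+w_q+\tilde v_q$ an exact solution. This correction solves a perturbed MHD system with forcing given by the high-high-to-low error, the transport error, and the time-cutoff error. Solve it by a fixed point in a Koch--Tataru type space adapted to $[t_q,t_{q-1}]$. The smallness comes from $\lambda_{q-1}/\lambda_q\ll1$ and from the exponential heat decay of $w_q,d_q$ after time $t_q$.

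Third, sum the series. Show that the $BMO^{-1}$ norms of the increments stay uniformly bounded, using the Carleson-measure characterisation of $BMO^{-1}$ together with the near-orthogonality of the scales. Then pass to the limit $q\to\infty$ to get smooth solutions on $(0,\infty)$, lying in $L^\infty BMO^{-1}\cap C^0\dot W^{-1,p}$. The $\dot W^{-1,p}$ continuity at $t=0$ follows since $\|(w_q,d_q)\|_{\dot W^{-1,p}}\lesssim\|a\|_{L^p}$ can be made summable by choosing amplitudes with small-support profiles $a$.

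Fourth, show the two limits are distinct. Track a fixed low Fourier mode, for instance the mode of $v$ at frequency $\lambda_0$. In the first scheme the recursion keeps it equal to a nonzero datum-driven value. In the second scheme the sign choice of the magnetic part makes $w\otimes w-d\otimes d$ vanish on average, so the cascade feeds nothing into that mode. The two schemes must still share the same initial data, which is arranged by choosing the same $(w_q,d_q)(t_q)$ and differing only in the lower-order correctors, as in \cite{Coiculescu2025}.

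I expect the main obstacle to be the second step combined with the geometric lemma. The MHD nonlinearity has two coupled quadratic terms, and the induction equation's antisymmetric structure must be matched simultaneously with the Reynolds-type symmetric term. At the same time, the parasitic interactions between $v_{q-1}$ and $w_q$ (transport and stretching at critical scaling) must be controlled with only logarithmic losses. If the direct fixed point fails, I would first try to absorb the low-high interaction by advecting the phase $\xi\cdot x$ along the flow of $v_{q-1}$ (Lagrangian phase correction) before estimating the remainder.
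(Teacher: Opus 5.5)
\textbf{The central gap is your fourth step: the mechanism that is supposed to make the two solutions differ while sharing the same data.}

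If both schemes use the same waves $(w_q,d_q)(t_q)$, then the low-frequency output of the level-$q$ self-interaction is quadratic in $(w_q,d_q)$. It is therefore the same in both schemes to leading order, and ``lower-order correctors'' cannot create an $O(1)$ difference. Flipping the sign of $d_q$ leaves $d_q\otimes d_q$ unchanged. Any choice that makes the average of $w_q\otimes w_q-d_q\otimes d_q$ vanish in only one scheme changes the waves, and hence the data.

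What you are missing is the dual interpretation of each level of the datum. In the paper, the level-$(k+1)$ amplitudes $a_{\xi,k+1},b_{\xi,k+1}$ are built from $\mathcal{D}\psi^0_k+\mathcal{D}\theta^0_k-s_{k+1}$ and $\widetilde{\mathcal{D}}\phi^0_k$ via Lemmas \ref{lemmaNS} and \ref{lemmaMHD}. This is done precisely so that the level-$k$ datum is simultaneously the initial value of two fields:
\begin{itemize}
\item the heat flow $(v_k,b_k)$, which decays at rate $N_k^2$;
\item the explicit cascade field $(\bar v_k,\bar b_k)$, whose potentials satisfy $\partial_t\bar S_k=-\mathcal N_{k+1,1}$ and $\partial_t\bar T_k=-\mathcal M_{k+1,1}$ (Remark \ref{intialequal}, \eqref{dixiao}). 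This field is drained by the averaged self-interaction of a heat-evolved level $k+1$ on the much shorter time scale $N_{k+1}^{-2}$.
\end{itemize}
A cascade-evolved level needs a heat-evolved level above it. A heat-evolved level leaves an $O(1)$ critical error unless the level below it is cascade-evolved. So the levels must alternate, and the two solutions are the two parity choices. Their data coincide exactly. At $t_0=N_0^{-2}$ one solution still contains $v_0$, of size $\sim N_0$. In the other, $v_0$ has been replaced by $\bar v_0$, which has already decayed on the time scale $N_1^{-2}\ll t_0$. Nothing in your plan produces this.

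\textbf{Two other steps would also fail as written.}

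First, the $BMO^{-1}$ bound. Your cascade forces the level-$q$ profile to cover the support of level $q-1$, so with shear or plane-wave blocks the supports never shrink. A lacunary sum of blocks, each of size $O(1)$ in $BMO^{-1}$ on a fixed set, is not in $BMO^{-1}$. The Carleson integral over a ball of radius $R$ collects an $O(1)$ share from every scale $\lambda_q\gtrsim R^{-1}$, so near-orthogonality only gives a bound growing like the number of such scales. The paper instead uses concentrated Mikado blocks. Each level occupies a small fraction of a neighbourhood of the previous one, so that $|\Omega^3_k\cap Q|\le 2^{k(Q)-k}|Q|$ (property 4 of Lemma \ref{lemma divfree}). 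This is what puts $\sum_k\mathrm{curl}(\psi^0_k+\theta^0_k)$ and $\sum_k\mathrm{curl}\,\phi^0_k$ in $BMO$.

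This is also where $d=5$ enters, not in room for the geometric lemma (MHD geometric lemmas already exist in three dimensions). $\Phi^0_{\xi,k}$ and $\Theta^0_{\xi,k}$ must be constant along both $\xi_1$ and $\xi_2$ for $\mathrm{div}(\Phi^0_{\xi,k}\otimes\Theta^0_{\xi,k})=0$ to hold. So they concentrate near 2-planes, and distinct 2-planes can be kept disjoint only if $2+2<d$.

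Second, your backward iteration is not well defined. $(v_{q-1},b_{q-1})$ solves the equations only on $[t_{q-1},\infty)$. Adding $w_q$ at $t_q<t_{q-1}$ therefore requires the lower levels on $[t_q,t_{q-1}]$, and these cannot be obtained by running MHD backward. The paper avoids this in three moves:
\begin{itemize}
\item it writes the full approximate solution explicitly on $(0,1]$;
\item it shows the residual is small in the subcritical space $Y$ (Proposition \ref{Prop 4.1});
\item it closes a single forward fixed point using the linearized propagator bound of Proposition \ref{prop:4.2}.
\end{itemize}
That bound carries a $(t/t')^{\epsilon}$ loss, which comes from the estimate of Proposition \ref{prop of v} (growing only logarithmically in $t_2/t_1$, with small constant $(\log A)^{-1}$) combined with the fractional Gr\"onwall lemma. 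The loss is absorbed by the $t^{\alpha}$ gain of the residual.
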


\begin{rema}
    The $BMO^{-1}$ norm was introduced by Koch-Tataru in \cite{BMO-1wp}, and is defined by
    \begin{gather*}
        \|u\|_{BMO^{-1}}\triangleq \sup_{R>0}\sup_{x_0\in \mathbb{R}^d} \left(\int^{R^2}_0\int_{B(x_0,R)}|e^{t\Delta}u|^2dxdt
        \right)^{\frac{1}{2}}<\infty,
    \end{gather*}
    where the dimension $d\geq2$. Due to the Carleson measure characterization of $BMO^{-1}$ (see \cite{Harmonic}), we have $u\in BMO^{-1}$ if $u=\mathrm{div}f$ for some vector field $f\in (BMO)^d$.  
\end{rema}

\begin{rema}
    In \cite{BMO-1wp}, Koch-Tataru introduced a critical space $X_{KT}$ with 
    norm
    \begin{align*}
        \|u\|_{X_{KT}}\triangleq \sup_{t>0}t^{\frac{1}{2}}\|u(t)\|_{L^\infty}+\sup_{x\in\mathbb{R}^3,R>0} R^{-\frac{3}{2}}\left(\int^{R^2}_0\int_{B(x,R)}|u(t,y)|^2dydt\right)^{\frac{1}{2}}.
    \end{align*}
    By verifying that the Navier-Stokes bilinear operator are bounded on $X_{KT}$, the authors implemented Kato's method to obtain the global well-posedness for small initial data in $BMO^{-1}$. The solutions $(\bar{v}^{(1)}, \bar{b}^{(1)})$ and $(\bar{v}^{(2)}, \bar{b}^{(2)})$ we constructed in Theorem \ref{maintheo} also belong to the same class. Moreover, it is rather easy to get that the initial data $(U^0,B^0)$ is smooth outside of a finite set $\{x_\xi\}_{\xi\in\Lambda_U\cup\Lambda_B}$ (see more detail in Section \ref{subsectionMikado}).
\end{rema}

\begin{rema}
    Theorem \ref{maintheo} also reveals the phenomenon of non-uniqueness of MHD equations when the dimension is greater than 5. In fact, the solution pairs in Theorem \ref{maintheo} satisfy the higher-dimensional MHD equations when zero components are added to the remaining dimensions. Owing to the geometric structure of system (\ref {e:MHDe}) and the properties of Mikado flows, the approach employed herein appears to fail to apply in dimensions lower than 5 (see details in Section \ref{subsectionMikado}). The difficluties we face are similar to those encountered for the 2D Navier–Stokes equations and those faced in the approach to Onsager's conjecture for the 2D Euler equations. In our view, once the non-uniqueness problem for the the 2D Navier–Stokes equations with initial data in $BMO^{-1}$ will be settled, the corresponding problem for the MHD equations is expected to be solved.
\end{rema}

\subsection{Main idea of the construction}
The non-uniqueness mechanism of MHD equations follows from the approach of \cite{Palasek2025} and \cite{Coiculescu2025}. In this part, we outline the main idea of the proof, omitting certain details for the sake of exposition. In our construction, there exist two types of flows:

\noindent
$\bullet$ The first is the heat-dominated flow, which is the solution of heat equaiton, up to a small error. 
\begin{align*}
    \begin{cases}
        \partial_tv_k-\Delta v_k=l.o.t,\\
        \partial_tv_k^B-\Delta v_k^B=l.o.t,\\
        \partial_tb_k-\Delta b_k=l.o.t,
    \end{cases}
\end{align*}
where $l.o.t$ means some lower order terms. Moreover, we let 
\begin{align*}
    \mathrm{supp}\,v_k\cap\mathrm{supp}\,v_k^B=\mathrm{supp}\,v_k\cap\mathrm{supp}\,b_k=\emptyset.
\end{align*}
And the heat-dominated flows satisfy the following exponential decay in time:
\begin{gather*}
    \|(v_k,v_k^B,b_k)\|_{L^\infty(\mathbb{T}^5)}\lesssim N_k e^{-N_{k}^2t}.
\end{gather*}

\noindent
$\bullet$ The second is the inverse cascade-dominated flow, whose temporal partial derivative annihilates the nonlinear self-interactions of the heat-dominated flow.
\begin{align*}
    \begin{cases}
        \partial_t\bar{v}_k+\mathbb{P}\mathrm{div}(v_{k+1}\otimes v_{k+1}+v_{k+1}^B\otimes v_{k+1}^B-b_{k+1}\otimes b_{k+1})=l.o.t,\\
         \partial_t\bar{b}_k+\mathrm{div}(v_{k+1}^B\otimes b_{k+1}-b_{k+1}\otimes v_{k+1}^B)=l.o.t.\\
         \bar{v}_k(0,\cdot)=v_k(0,\cdot)+v_k^B(0,\cdot),\\
         \bar{b}_k(0,\cdot)=b_k(0,\cdot).
    \end{cases}
\end{align*}
Compared to the heat-dominated flows, the inverse cascade-dominated flows have slower exponential decay in time:
\begin{gather*}
    \|(\bar{v}_k,\bar{b}_k)\|_{L^\infty(\mathbb{T}^5)}\lesssim N_k e^{-N_{k+1}^2t}.
\end{gather*}

The principal part of the solutions to the magnetohydrodynamic equations is derived and given as follows:
\begin{gather*}
    v^{(1)}\triangleq\sum_{k\geq0\,even}(v_k+v_k^B)+\sum_{k\geq0\,odd}\bar{v}_k,\quad b^{(1)}\triangleq\sum_{k\geq0\,even}b_k+\sum_{k\geq0\,odd}\bar{b}_k
    \end{gather*}
    and 
    \begin{gather*}
    v^{(2)}\triangleq\sum_{k\geq0\,odd}(v_k+v_k^B)+\sum_{k\geq0\,even}\bar{v}_k,\quad b^{(2)}\triangleq\sum_{k\geq0\,odd}b_k+\sum_{k\geq0\,even}\bar{b}_k.
\end{gather*}

In the following discussion, $i$ denotes either $1$ or $2$. As a result of the cancellation between the heat-dominated flows and the inverse cascade-dominated flows, we can deduce remaining terms $(F^{(i)},G^{(i)})$ which are defined by
\begin{align*}
    \begin{cases}
        (\partial_t-\Delta) v^{(i)}+ \mathbb{P}\text{div}( v^{(i)}\otimes  v^{(i)}-b^{(i)}\otimes b^{(i)})=-\mathbb{P}\mathrm{div}F^{(i)},  \\
        (\partial_t-\Delta) b^{(i)}+\mathrm{div}(v^{(i)}\otimes  b^{(i)}-b^{(i)}\otimes v^{(i)})=-\mathrm{div}G^{(i)},
    \end{cases}
\end{align*}
are arbitrarily small in some subcritial norms. In other words, we conclude that $(v^{(1)},b^{(1)})$ and $(v^{(2)},b^{(2)})$ are solutions to the MHD equations (\ref{e:MHDe}) with a small error in divergence form. 

In the final step, we add the perturbation $(\omega^{(i)},\rho^{(i)})$ to $(v^{(i)},b^{(i)})$ to correct the associated errors, so that $(\bar{v}^{(i)},\bar{b}^{(i)})\triangleq(v^{(i)}+\omega^{(i)},b^{(i)}+\rho^{(i)})$ is indeed a solution to the MHD equations (\ref{e:MHDe}). Then $(\omega^{(i)},\rho^{(i)})$ must satisfy the perturbed MHD equations
\begin{align*}
        \begin{cases}
            (\partial_{t}-\Delta)\omega^{(i)}+\mathbb{P}\text{div}\, (v^{(i)}\otimes \omega^{(i)}+\omega^{(i)}\otimes v^{(i)}-b^{(i)}\otimes \rho^{(i)}- \rho^{(i)}\otimes b^{(i)}+\omega^{(i)}\otimes \omega^{(i)}-\rho^{(i)} \otimes \rho^{(i)})\\\,=\mathbb{P}\text{div}\, F^{(i)},\\
            (\partial_{t}-\Delta)\rho^{(i)}+\text{div}\,(v^{(i)}\otimes \rho^{(i)}-\rho^{(i)}\otimes v^{(i)}+\omega^{(i)}\otimes b^{(i)}-b^{(i)}\otimes \omega^{(i)}+\omega^{(i)}\otimes \rho^{(i)}-\rho^{(i)}\otimes \omega^{(i)})\\\,=\text{div}\, G^{(i)},\\
            \rho^{(i)}(0,\cdot)=\omega^{(i)}(0,\cdot)=0.
        \end{cases}
\end{align*}
Given that $(F^{(i)},G^{(i)})$ is small in certain subcritical spaces, we are able to establish the existence of $(\omega^{(i)},\rho^{(i)})$ and confirm its smallness in appropriate subcritical norms by means of a fixed point argument. 

Using the fact that $\bar{v}_k(0,\cdot)=v_k(0,\cdot)+v_k^B(0,\cdot)$, $\bar{b}_k(0,\cdot)=b_k(0,\cdot)$ and $\rho^{(i)}(0,\cdot)=w^{(i)}(0,\cdot)=0$, we know that $(\bar{v}^{(1)},\bar{b}^{(1)})$ and $(\bar{v}^{(2)},\bar{b}^{(2)})$ have the same initial data in $BMO^{-1}$, which is ensured by some support estimates. However, owing to the distinct temporal decay behaviors of $(v_k,v_k^B,b_k)$ and $(\bar{v}_k,\bar{b}_k)$, we can infer that $(\bar{v}^{(1)},\bar{b}^{(1)})$ and $(\bar{v}^{(2)},\bar{b}^{(2)})$ are distinct (see details in Section \ref{Proof of the Non-Uniqueness}).

\subsection{Notation and parameters}
In this section, we collect the notation used throughout the paper and introduce the parameters appearing in our construction.

Let $S^{5\times5}(\mathbb{R})$ be the space of real symmetric $5\times5$ matrices and $A^{5\times5}(\mathbb{R})$ be the space of real antisymmetric $5\times5$ matrices. For any real numbers $a$ and $b$, we write $a\wedge b$ and $a\vee b$ to denote $\min\{a,b\}$ and $\max\{a,b\}$, respectively. 

We use the notation $X\lesssim Y$ to mean that $X\leq CY$ with some constants $C>0$ independent of A, but which may depend on other parameters. We write $\lfloor x \rfloor$ means the largest integer $\geq x$.

We introduce a parameter $\alpha\in(0,\frac{1}{8})$ to quantify the subcriticality of the remaining terms $(F^{(i)},G^{(i)})$ and the perturbations $(\omega^{(i)},\rho^{(i)})$. To measure the amplitude and the frequency of the building blocks, we introduce the parameters
\begin{align*}
M_k\triangleq\lfloor A^{b^k} \rfloor, \quad
N_k \triangleq M_k \lfloor M_k^{\gamma-1}\rfloor, \quad \mathrm{for}\,\, k\in\mathbb{N},
\end{align*}
where $b>10\vee\frac{1}{1-8\alpha}$ and $\frac{1}{1-4\alpha}<\gamma<\frac{1}{4\alpha+b^{-1}}$. Moreover, we introduce the small H\"{o}lder exponent $\kappa\in(0,(\frac{1}{2}-\frac{1}{2\gamma}-2\alpha)\wedge\alpha)$. Finally, $A\gg1$ depending on other parameters will be chosen sufficiently large at the end.

\subsection{Organization of the paper}
The organization of the rest of the paper is as follows. In Section \ref{sectprincipal}, we design the intial data and the leading parts of the solutions. In Section \ref{sec:4}, we verify that the remaining terms are small in some subcritical norms and estimate the perturbations. In Section \ref{Proof of the Non-Uniqueness}, we complete the proof of Theorem \ref{maintheo} to show the non-uniqueness. Appendix contains some technical tools used in the paper.

\section{The leading part of the solutions}\label{sectprincipal}

In this section, we mainly construct the crucial part of the solutions. The key building block is the Mikado flow, which is introduced by \cite{6}, garantees that initial data belongs to $BMO^{-1}$.

\subsection{Construction of the Mikado potential}\label{subsectionMikado}

Let $\varphi\in C^\infty_0(B(0,1),\mathbb{R})$ satisfying $\varphi(0)=1$, where $B(0,1)$ is the ball of radius 1 centered at 0 in $\mathbb{R}^{d-2}$. By an abuse of nation, we periodize $\varphi$ so that $\varphi$ is treated as periodic functions defined on $\mathbb{T}^{d-2}$. For any $\xi\in \Lambda_U\cup \Lambda_B$ and $k\geq0$, we define Mikado flow as follows,
\begin{align}
    \varphi_{\xi}(x)&\triangleq\varphi(\xi x,\xi_3x,\xi_{4}x), \quad\forall x\in\mathbb{T}^5,\label{defivarphixi}
    \\
    \varphi_{\xi,k}(x)&\triangleq\varphi_{\xi}(M_k\xi(x-x_{\xi})), \quad\forall x\in\mathbb{T}^5,\label{defi Mikado}
\end{align}
where $x_\xi\in \mathbb{T}^5$.
Since the dimension is 5, we can choose $\{x_\xi\}_{\Lambda_U\cup \Lambda_B}$ such that $\{\xi_1\mathbb{R}+\xi_2\mathbb{R}+x_\xi\, \,mod\,(2\pi\mathbb{Z})^5\}_{\Lambda_U\cup \Lambda_B}$ never intersect each other. 
More precisely, we can achieve this by observing that for any distinct $\xi,\xi'\in {\Lambda_U\cup \Lambda_B}$, these $4\times5$ nonhomogeneous linear systems
\begin{align*}
        y_1\xi_1+y_2\xi_2-y_1'\xi_1'+y_2'\xi_2'=x_{\xi'}-x_{\xi}
\end{align*}
are unsolvable for $(y_1,y_2,y_1',y'_2)\in \mathbb{R}^4$ if we choose suitable $\{x_\xi\}_{\Lambda_U\cup \Lambda_B}$ carefully.

Note that 
\begin{gather}
    \text{supp}\,\varphi_{\xi,k}=\{\xi_1\mathbb{R}+\xi_2\mathbb{R}+x_\xi+\text{B}_\xi(0,\frac{1}{M_k})\,\, mod\,(2\pi\mathbb{Z})^5\},\quad\forall \xi\in\Lambda_U\cup \Lambda_B,\label{supp defi}
\end{gather}
where $\text{B}_\xi(0,\frac{1}{M_k})$ denotes $\{\xi(-\frac{1}{M_k},\frac{1}{M_k})+\xi_3(-\frac{1}{M_k},\frac{1}{M_k})+\xi_{4}(-\frac{1}{M_k},\frac{1}{M_k})\}$ in $\mathbb{R}^5$. Since $\{\xi_1\mathbb{R}+\xi_2\mathbb{R}+x_\xi\, \,mod\,(2\pi\mathbb{Z})^5\}_{\Lambda_U\cup \Lambda_B}$ never intersect each other, we choose A sufficiently large such that $\text{supp}\,\varphi_{\xi,k}\cap\text{supp}\,\varphi_{\xi',k}=\emptyset$ for any distinct $\xi, \xi'\in \Lambda_U\cup \Lambda_B$. 

Further, we define slight expansions of the support regions of $\{\varphi_{\xi,k}\}$: For $j\in\{1,2,3\}$, $\Omega_k^{j}$ is j$N_{k+1}^{-\frac{1}{3}}N_{k}^{-\frac{2}{3}}$-neighborhood of $\cup_{\xi\in\Lambda_U\cup \Lambda_B }\text{supp}\,\varphi_{\xi,k}$.

In order to estimate initial data in the $BMO^{-1}$ norm, we introduce the Mikado flows in term of vector potentials (magnetic potentials) rather than velocity fields (magnetic fields).
\begin{defi}\label{defi potential}
    We define Mikado flow potentials:
    \begin{align*}
        \Psi^0_{\xi,k}(x)&\triangleq N_k^{-2}\varphi_{\xi,k}(x)\mathrm{sin}(N_k(x-x_\xi)\cdot \xi)\xi_1,\quad\forall\xi\in\Lambda_U,\\
        \Phi^0_{\xi,k}(x)&\triangleq N_k^{-2}\varphi_{\xi,k}(x)\mathrm{sin}(N_k(x-x_\xi)\cdot \xi)\xi_1,\quad\forall\xi\in\Lambda_B.\\
        \Theta^0_{\xi,k}(x)&\triangleq N_k^{-2}\varphi_{\xi,k}(x)\mathrm{sin}(N_k(x-x_\xi)\cdot \xi)\xi_2,\quad\forall\xi\in\Lambda_B.
    \end{align*}
\end{defi}
    
Lemma \ref{lemma divfree} below contains some basic properties of the Mikado flow potentials.
\begin{lemm}\label{lemma divfree} For $k\in\mathbb{N}$, we have the following properties:

\noindent
1.With $\{\varphi_{\xi,k}\}$ defined in (\ref{defi Mikado}), we have 
\begin{align*}
    \text{supp}\,\varphi_{\xi,k}\cap\text{supp}\,\varphi_{\xi',k}=\emptyset,\quad\forall \xi\neq\xi'\in\Lambda_U\cap\Lambda_B,
\end{align*}
and 
\begin{align}
    \|\nabla^m\varphi_{\xi,k}\|_{L^{\infty}(\mathbb{T}^5)}\lesssim M_k^m,\quad \forall \xi\in\Lambda_U\cap\Lambda_B,\quad\forall m\in\mathbb{N}.\label{e varphi}
\end{align}

\noindent 
2.Each $\Psi_{\xi,k}^0$, $\Phi_{\xi,k}^0$ and $\Theta_{\xi,k}^0$ are divergence-free and solve the following steady equations:
\begin{gather}
    \mathrm{div}\,(\Psi_{\xi,k}^0\otimes \Psi_{\xi,k}^0)=0,\quad\forall\xi\in\Lambda_U,\\
    \mathrm{div}\,(\Phi_{\xi,k}^0\otimes \Phi_{\xi,k}^0)=\mathrm{div}\,(\Theta_{\xi,k}^0\otimes \Theta_{\xi,k}^0)=0,\quad\forall\xi\in \Lambda_B,\\
    \mathrm{div}\,(\Phi_{\xi,k}^0\otimes \Theta_{\xi,k}^0)=\mathrm{div}\,(\Theta_{\xi,k}^0\otimes \Phi_{\xi,k}^0)=0,\quad\forall\xi\in\Lambda_B.
\end{gather}
Moreover, we have
\begin{gather}
    \|\nabla^m\Psi^0_{\xi,k}\|_{L^{\infty}(\mathbb{T}^5)}\lesssim N_k^{-2+m},\quad\forall\xi\in\Lambda_U,\label{e Psi}\\
    \|\nabla^m\Phi^0_{\xi,k}\|_{L^{\infty}(\mathbb{T}^5)}\lesssim N_k^{-2+m},\quad\forall\xi\in\Lambda_B,\label{e Phi}\\
    \|\nabla^m\Theta^0_{\xi,k}\|_{L^{\infty}(\mathbb{T}^5)}\lesssim N_k^{-2+m},\quad\forall\xi\in\Lambda_B.\label{e Theta}
\end{gather}

\noindent
3.With 
\begin{gather*}
    A_{\xi,k}\triangleq\fint_{\mathbb{T}^5}\varphi_{\xi,k}^2(x)\mathrm{sin}^2(N_k(x-x_\xi)\cdot \xi)dx,\quad\forall\xi\in\Lambda_U,\\
    B_{\xi,k}\triangleq\fint_{\mathbb{T}^5}\varphi_{\xi,k}^2(x)\mathrm{sin}^2(N_k(x-x_\xi)\cdot \xi)dx,\quad\forall\xi\in \Lambda_B,
\end{gather*}
there exists an universal constant $\delta>0$ such that
\begin{gather}
    \delta^{-1}\leq A_{\xi,k}\leq \delta,\quad\forall\xi\in\Lambda_U,\label{e Axi}\\
    \delta^{-1}\leq B_{\xi,k}\leq \delta,\quad\forall\xi\in\Lambda_B.\label{e Bxi}
\end{gather}

\noindent
4.For any subset $Q\subset\mathbb{T}^5$, we have
\begin{gather}
    |\Omega_k^3\cap Q|\leq2^{k(Q)-k}|Q|,\quad\forall k\geq k(Q),\label{e Omegak}
\end{gather}
where $k(Q)\triangleq \inf\{k\in\mathbb{N}:|\Omega_k^3|\leq|Q|\}$.
\end{lemm}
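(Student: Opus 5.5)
The four parts are essentially independent, so I will prove them in order, each by direct computation from the definitions (\ref{defivarphixi})–(\ref{supp defi}). Throughout, $\xi=(\xi_1,\dots,\xi_5)$ is an orthonormal frame of $\mathbb{R}^5$ with rational entries, and $\xi$ in the phase denotes the direction $\xi_5$ (the notation of the paper). With this convention, $\varphi_\xi$ depends only on the three coordinates $x\cdot\xi_3,x\cdot\xi_4,x\cdot\xi_5$ transverse to the plane $\xi_1\mathbb{R}+\xi_2\mathbb{R}$, and the phase depends only on $x\cdot\xi_5$.

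\emph{Part 1.} The disjointness of supports follows from (\ref{supp defi}) and the choice of $\{x_\xi\}$. The closed two-dimensional subtori $\xi_1\mathbb{R}+\xi_2\mathbb{R}+x_\xi$ are pairwise disjoint; they are compact because the frame is rational. Hence they are at positive mutual distance $d_0$. Choosing $A$ large so that $2\sqrt3/M_0<d_0$ gives disjointness of the $M_k^{-1}$-tubes for every $k$, since $M_k\geq M_0$. The bound (\ref{e varphi}) is the chain rule: $\varphi_{\xi,k}$ is $\varphi$ composed with a linear map of norm $\lesssim M_k$, and $\xi$ is fixed. Since $M_k\xi$ has integer entries for $A$ large, provided $M_k$ is a multiple of the common denominator, the function $\varphi_{\xi,k}$ is genuinely $(2\pi\mathbb{Z})^5$-periodic. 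I would note this as a harmless adjustment of the definition of $M_k$.

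\emph{Part 2.} Each potential has the form $f(x)\,e$ with $e\in\{\xi_1,\xi_2\}$ and $f=N_k^{-2}\varphi_{\xi,k}\sin(N_k(x-x_\xi)\cdot\xi_5)$. The gradient $\nabla f$ lies in $\mathrm{span}\{\xi_3,\xi_4,\xi_5\}\perp e$. Therefore $\mathrm{div}(fe)=e\cdot\nabla f=0$. Likewise, for $e,e'\in\{\xi_1,\xi_2\}$ and $f,g$ of this type,
\begin{align*}
\mathrm{div}(fe\otimes ge')=(e\cdot\nabla(fg))\,e'=0 .
\end{align*}
This yields all the stated stationarity identities, including the mixed $\Phi\otimes\Theta$ ones, which is precisely where two transverse directions $\xi_1,\xi_2$ sharing the profile are needed. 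This is the reason the dimension has to be $5$: three directions for the tube and two for the field. The derivative bounds (\ref{e Psi})–(\ref{e Theta}) follow by the Leibniz rule, using $\|\nabla^m\varphi_{\xi,k}\|_{L^\infty}\lesssim M_k^m\leq N_k^m$ and the fact that derivatives of the phase cost $N_k$.

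\emph{Part 3.} Here lies the only real subtlety: the constants must be uniform in $k$, and $\varphi_{\xi,k}$ has support of measure $\sim M_k^{-3}$. The normalized average $\fint\varphi_{\xi,k}^2$ is therefore not bounded below. I would read $A_{\xi,k},B_{\xi,k}$ as intended with the profile normalized so that $\fint\varphi_\xi^2=1$, or equivalently I would prove two-sided bounds after this normalization. With that normalization the argument runs as follows. Write $\sin^2=\tfrac12-\tfrac12\cos(2N_k(x-x_\xi)\cdot\xi_5)$. The constant part gives $\tfrac12\fint\varphi_{\xi,k}^2=\tfrac12\fint\varphi_\xi^2$, by the change of variables $x\mapsto M_k\xi(x-x_\xi)$, a measure-preserving covering of the torus, independent of $k$. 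For the oscillatory part, $N_k/M_k=\lfloor M_k^{\gamma-1}\rfloor$ is an integer. Hence $\varphi_{\xi,k}^2$ has Fourier support on frequencies in $M_k\mathbb{Z}$-multiples along $\xi_5$, while the cosine has frequency $2N_k\xi_5$. By stationary phase, meaning integration by parts $n$ times in the $\xi_5$ direction, the oscillatory part is $O((M_k/N_k)^n)$, which is small for $A$ large. This gives $\delta$ universal.

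\emph{Part 4.} Use $|\Omega_k^3|\lesssim M_k^{-3}$: the neighborhood width $3N_{k+1}^{-1/3}N_k^{-2/3}\ll M_k^{-1}$, and the number of tubes is finite. Then $|\Omega_{k+1}^3|\leq\tfrac12|\Omega_k^3|$ for $A$ large, using the superexponential growth of $M_k$. By definition of $k(Q)$, $|\Omega^3_{k(Q)}|\leq|Q|$, so
\begin{align*}
|\Omega_k^3\cap Q|\leq|\Omega_k^3|\leq2^{k(Q)-k}|\Omega_{k(Q)}^3|\leq2^{k(Q)-k}|Q| .
\end{align*}
I expect Part 3, specifically the uniform normalization and the periodicity bookkeeping, to be the main obstacle; the rest is routine.
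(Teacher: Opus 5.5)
\textbf{Parts 1, 2, 4.} These are sound in the single-tube geometry of (\ref{supp defi}). Part 4 is essentially the paper's argument; the paper proves only property 4 and calls 1--3 trivial. It compares $|\Omega_k^3|=C(M_k^{-1}+3N_{k+1}^{-1/3}N_k^{-2/3})^3$ directly with $|\Omega^3_{k(Q)}|\le|Q|$ and obtains the factor $(2M_{k(Q)}/M_k)^3\le 2^{k(Q)-k}$. Your iterated halving is equivalent, but you should also record the lower bound $|\Omega_k^3|\gtrsim M_k^{-3}$, which is needed to compare consecutive generations.

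\textbf{The gap is in Part 3.} You rightly note that if $\mathrm{supp}\,\varphi_{\xi,k}$ is one tube of measure $\sim M_k^{-3}$, then $A_{\xi,k}\lesssim M_k^{-3}$. So (\ref{e Axi})--(\ref{e Bxi}) cannot hold with $\delta$ independent of $k$. Your repair does not resolve this:
\begin{itemize}
\item Normalizing $\fint\varphi_\xi^2=1$ leaves the factor $M_k^{-3}$ untouched.
\item The identity $\fint\varphi_{\xi,k}^2=\fint\varphi_\xi^2$ via the covering $x\mapsto M_k(x-x_\xi)$ holds only if $\varphi_{\xi,k}$ is a $2\pi/M_k$-periodic array of $\sim M_k^3$ parallel tubes. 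The same applies to your claim that $\varphi_{\xi,k}^2$ has Fourier support in $M_k\mathbb{Z}^5$.
\item That array picture contradicts your own preceding observation that the support has measure $\sim M_k^{-3}$.
\end{itemize}

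So you are implicitly using two incompatible building blocks. The paper does the same: read literally, (\ref{defi Mikado}) with a periodized $\varphi$ gives the array, while (\ref{supp defi}) describes a single tube. No single choice makes all four of your arguments valid.

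\emph{Periodic array.} Your Part 3 is then correct, but $|\mathrm{supp}\,\varphi_{\xi,k}|$ is comparable to $|\mathbb{T}^5|$ for every $k$. Your disjointness argument (tubes of radius $\sim M_k^{-1}$ around subtori at fixed distance $d_0$) no longer applies. Part 4 becomes false: take $Q=\mathbb{T}^5$, so $k(Q)=0$, and property 4 would force $|\Omega_k^3|\le 2^{-k}|\mathbb{T}^5|$.

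\emph{Single tubes.} This is the geometry that Part 4 and the $BMO$ estimate require. Here your computation actually yields the rescaled bound $\delta^{-1}M_k^{-3}\le A_{\xi,k},B_{\xi,k}\le \delta M_k^{-3}$. In transverse coordinates, $A_{\xi,k}=c_\xi M_k^{-3}\int_{\mathbb{R}^3}\varphi^2(w)\sin^2(\tfrac{N_k}{M_k}w_1)\,dw$, and the cosine part is $O((M_k/N_k)^n)$ after integrating by parts in $w_1$. Equivalently, a $k$-uniform $\delta$ exists for the unit-scale average $\fint_{\mathbb{T}^5}\varphi_\xi^2(y)\sin^2(\tfrac{N_k}{M_k}y\cdot\xi)\,dy$, which is the mean implicitly subtracted when $h_{\xi,k}$ is declared mean-free in Lemma \ref{Fh}. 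It does not exist for $A_{\xi,k}$ as defined.

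You need to fix one geometry and state explicitly which modified version of property 3 you prove.
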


\begin{proof}
    The proof of properties 1-3 is trivial and thus is omitted. In the following, we verify property 4 directly. When $k=k(Q)$, it is clearly valid. For any $k> k(Q)$, by (\ref{supp defi}) and the definition of $k(Q)$, we have
    \begin{align*}
        |\Omega_k^3\cap Q|&\leq|\Omega_k^3|\\
        &=C_d(M_k^{-1}+3N_{k+1}^{-\frac{1}{3}}N_{k}^{-\frac{2}{3}})^{3}\\
        &\leq C_d(M_{k(Q)}^{-1}+3N_{{k(Q)}+1}^{-\frac{1}{3}}N_{{k(Q)}}^{-\frac{2}{3}})^{3}(\frac{M_{k(Q)}}{M_k}+(\frac{N_{k(Q)+1}}{N_{k+1}})^{\frac{1}{3}}(\frac{N_{k(Q)}}{N_k})^{\frac{2}{3}})^{3}\\
        &\leq(2\frac{M_{k(Q)}}{M_k})^{3}|Q|,
    \end{align*}
    where the last inequality follows from the fact 
    \begin{gather*}
        (\frac{N_{k(Q)+1}}{N_{k+1}})^{\frac{1}{3}}(\frac{N_{k(Q)}}{N_k})^{\frac{2}{3}}\leq (\frac{M_{k(Q)}}{M_k})^\gamma\leq\frac{M_{k(Q)}}{M_k},\quad\forall k>k(Q).
    \end{gather*}
   
    Moreover, by the rapid growth of $M_k$ and choosing A sufficiently large, it follows immediately that
    \begin{align*}
        (2\frac{M_{k(Q)}}{M_k})^{3}<2^{k(Q)-k}, \quad\forall k>k(Q),
    \end{align*}
    which completes the proof of property 4.
\end{proof}

\subsection{Construction of the initial data}
Based on the Mikado flow potentials defined in above, we build the vector (magnetic) potentials for the initial data and then the initial data in this part. Firstly, Let us introducing some useful elements: 
\begin{defi}
    We define:
\begin{itemize}
    \item Two types of differrential operators that take vector fileds and magnetic fileds to symmetric rank-2 tensors and skew-symmetric rank-2 tensors respectively,
\begin{align*}
    \mathcal{D}f&\triangleq-\nabla f-\nabla f^{T}+2(\mathrm{div}f)\mathrm{Id},\\
    \widetilde{\mathcal{D}}f&\triangleq-\nabla f+\nabla f^{T}.
\end{align*}
\item two cutoff functions $\chi_k^V$ and $\chi_k^B$. $\chi_k^V$ is identically 1 in $\Omega_{k-1}^2$ and identically 0 outside of $\Omega_{k-1}^3$. $\chi_k^B$ is identically 1 in $\Omega_{k-1}^1$ and identically 0 outside of $\Omega_{k-1}^2$. Moreover, 
\begin{gather}
    \|\nabla^m(\chi_k^V,\chi_k^B)\|_{L^\infty(\mathbb{T}^5)}\lesssim(N_k^{\frac{1}{3}}N_{k-1}^{\frac{2}{3}})^m.\label{e chi}
\end{gather}
\item A standard mollifier $\eta_k$ with length scale $N_{k+1}^{-\frac{1}{3}}N_{k}^{-\frac{2}{3}}$, supported in $B(0,1)$ with $\int_{\mathbb{T}^5}\phi_k=1$.
\end{itemize}
\end{defi}

We bulid the vector potentials and the magnetic potentials by induction. Recalling $\varepsilon_0>0$ is the universal constant introduced in Lemma \ref{lemmaNS} and \ref{lemmaMHD}.
\begin{defi}\label{defi of psiok}
    Let $\bar{\xi}$ and $\bar{\bar{\xi}}$ be two vectors in $\Lambda_U$ and $\Lambda_B$, respectively. We define  
    \begin{gather*}
        \psi^0_0\triangleq N_0\eta_0*\Psi^0_{\bar{\xi},0},\\
        \phi^0_0\triangleq N_0\eta_0*\Phi^0_{\bar{\bar{\xi}},0},\\
        \theta^0_0\triangleq N_0\eta_0*\Theta^0_{\bar{\bar{\xi}},0}.
    \end{gather*}
    Given $\psi^0_{k-1}$, $\phi^0_{k-1}$ and $\theta^0_{k-1}$, we define   
    \begin{gather}
    a_{\xi,k}(x)=N_k(\frac{2l_k}{A_{\xi,k}})^{\frac{1}{2}}\chi_k^V(x)\Gamma_\xi(\mathrm{Id}+l_k^{-1}(\mathcal{D}\psi^0_{k-1}+\mathcal{D}\theta^0_{k-1}-s_{k})),\quad\forall\xi\in\Lambda_U,\label{defi of a}\\
    b_{\xi,k}(x)=N_k(\frac{2\|\widetilde{D}\phi_{k-1}^0\|_{L^\infty(\mathbb{T}^5)}}{\varepsilon_0B_{\xi,k}})^{\frac{1}{2}}\chi_k^B(x)\gamma_{\xi}(\frac{\varepsilon_0\widetilde{D}\phi_{k-1}^0}{\|\widetilde{D}\phi_{k-1}^0\|_{L^\infty(\mathbb{T}^5)}}),\quad\forall\xi\in\Lambda_B,\label{defi of b}
    \end{gather}
    where
    \begin{gather}
    l_k\triangleq\varepsilon_0^{-1}(\|\mathcal{D}\psi^0_{k-1}\|_{L^\infty(\mathbb{T}^5)}+\|\mathcal{D}\theta^0_{k-1}\|_{L^\infty(\mathbb{T}^5)}+\|s_k\|_{L^\infty(\mathbb{T}^5)}),\\
    s_k\triangleq(2N_k)^{-2}\sum_{\xi\in\Lambda_B}B_{\xi,k}b_{\xi,k}^2(\xi_2\otimes\xi_2-\xi_1\otimes\xi_1).\label{defi Sk}
\end{gather}
We inductively define
    \begin{align}
\psi^0_k\triangleq\eta_k*\sum_{\xi\in\Lambda_U}a_{\xi,k}\Psi^0_{\xi,k},\\
\phi^0_k\triangleq\eta_k*\sum_{\xi\in\Lambda_B}b_{\xi,k}\Phi^0_{\xi,k},\label{definition of phi}\\
\theta^0_k\triangleq\eta_k*\sum_{\xi\in\Lambda_B}b_{\xi,k}\Theta^0_{\xi,k}.
    \end{align}

\end{defi}

From Definition \ref{defi of psiok}, we get the following estimation.
\begin{lemm}
For any $k\geq1$, it holds that
    \begin{gather}
        \|s_k\|_{L^{\infty}(\mathbb{T}^5)}\lesssim\|\widetilde{D}\phi_{k-1}^0\|_{L^\infty(\mathbb{T}^5)},\label{e S}\\
        \|a_{\xi,k}\|_{L^{\infty}(\mathbb{T}^5)}\lesssim N_k(\|\mathcal{D}\psi^0_{k-1}\|_{L^\infty(\mathbb{T}^5)}+\|\mathcal{D}\theta^0_{k-1}\|_{L^\infty(\mathbb{T}^5)}+\|\widetilde{D}\phi_{k-1}^0\|_{L^\infty(\mathbb{T}^5)})^{\frac{1}{2}},\label{e a}\\
         \|b_{\xi,k}\|_{L^{\infty}(\mathbb{T}^5)}\lesssim N_k(\|\widetilde{D}\phi_{k-1}^0\|_{L^\infty(\mathbb{T}^5)})^{\frac{1}{2}},\label{e b}
    \end{gather}
    and
    \begin{gather}
        \|\nabla s_k\|_{L^{\infty}(\mathbb{T}^5)}\lesssim  N_k^{\frac{1}{3}}N_{k-1}^{\frac{2}{3}}\|\widetilde{D}\phi_{k-1}^0\|_{L^\infty(\mathbb{T}^5)},\\
        \|\nabla a_{\xi,k}\|_{L^{\infty}(\mathbb{T}^5)}\lesssim N_k^{\frac{4}{3}}N_{k-1}^{\frac{2}{3}}(\|\mathcal{D}\psi^0_{k-1}\|_{L^\infty(\mathbb{T}^5)}+\|\mathcal{D}\theta^0_{k-1}\|_{L^\infty(\mathbb{T}^5)}+\|\widetilde{D}\phi_{k-1}^0\|_{L^\infty(\mathbb{T}^5)})^{\frac{1}{2}},\label{e nabla a}\\
        \|\nabla b_{\xi,k}\|_{L^{\infty}(\mathbb{T}^5)}\lesssim N_k^{\frac{4}{3}}N_{k-1}^{\frac{2}{3}}(\|\widetilde{D}\phi_{k-1}^0\|_{L^\infty(\mathbb{T}^5)})^{\frac{1}{2}}.\label{e nabla b}
    \end{gather}
\end{lemm}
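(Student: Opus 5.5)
The plan is to read the estimates directly off Definition \ref{defi of psiok}, using the facts stated earlier. These are: the two-sided bound (\ref{e Bxi}) on $B_{\xi,k}$, and the cutoff bounds (\ref{e chi}). We will also use that $\Gamma_\xi$ and $\gamma_\xi$ are smooth on a closed neighbourhood of the arguments they are evaluated at. For $\gamma_\xi$ this is the ball of radius $\varepsilon_0$ in $A^{5\times5}(\mathbb{R})$; for $\Gamma_\xi$ it is the ball of radius $\varepsilon_0$ around $\mathrm{Id}$ in $S^{5\times5}(\mathbb{R})$, as in the geometric Lemmas \ref{lemmaNS} and \ref{lemmaMHD}. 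On these compact sets $\Gamma_\xi,\gamma_\xi$ and their first derivatives are bounded by universal constants. The order of proof matters, since $s_k$ is built from $b_{\xi,k}$ and $l_k$ is built from $s_k$. We therefore prove (\ref{e b}) first, then (\ref{e S}), and then (\ref{e a}). The gradient bounds follow in the same order.

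\emph{Step 1: the $L^\infty$ bounds.} Set $\epsilon\triangleq\|\widetilde{D}\phi^0_{k-1}\|_{L^\infty}$. The argument of $\gamma_\xi$ has norm at most $\varepsilon_0$, and $|\chi^B_k|\le1$. Together with $B_{\xi,k}\ge\delta^{-1}$, this gives $\|b_{\xi,k}\|_{L^\infty}\lesssim N_k\epsilon^{1/2}$, which is (\ref{e b}). Inserting this into (\ref{defi Sk}), and using $B_{\xi,k}\le\delta$ and the boundedness of $\xi_2\otimes\xi_2-\xi_1\otimes\xi_1$, we get $\|s_k\|_{L^\infty}\lesssim N_k^{-2}N_k^2\epsilon=\epsilon$, which is (\ref{e S}). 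By definition of $l_k$, the matrix $\mathrm{Id}+l_k^{-1}(\mathcal{D}\psi^0_{k-1}+\mathcal{D}\theta^0_{k-1}-s_k)$ lies within $\varepsilon_0$ of $\mathrm{Id}$, so $\Gamma_\xi$ of it is $O(1)$. Since $l_k\lesssim\|\mathcal{D}\psi^0_{k-1}\|_{L^\infty}+\|\mathcal{D}\theta^0_{k-1}\|_{L^\infty}+\epsilon$ by (\ref{e S}), estimate (\ref{e a}) follows.

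\emph{Step 2: the gradient bounds.} We use the product and chain rules. For $b_{\xi,k}$, the derivative either falls on $\chi^B_k$, costing $N_k^{1/3}N_{k-1}^{2/3}$ by (\ref{e chi}), or on $\gamma_\xi(\cdot)$. In the second case it costs $\varepsilon_0\epsilon^{-1}\|\nabla\widetilde{D}\phi^0_{k-1}\|_{L^\infty}$. To control this we need $\|\nabla\widetilde{D}\phi^0_{k-1}\|_{L^\infty}\lesssim N_k^{1/3}N_{k-1}^{2/3}\,\epsilon$. We will get it because $\phi^0_{k-1}=\eta_{k-1}*(\cdots)$, and $\eta_{k-1}$ has length scale $N_k^{-1/3}N_{k-1}^{-2/3}$. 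Moving the derivative onto the mollifier gives $\nabla\widetilde{D}\phi^0_{k-1}=(\nabla\eta_{k-1})*\widetilde{D}(\cdots)$. However, this bounds the gradient in terms of $\|\widetilde{D}(\cdots)\|_{L^\infty}$ rather than $\epsilon=\|\eta_{k-1}*\widetilde{D}(\cdots)\|_{L^\infty}$.

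This is the main obstacle: the $L^\infty$ norm of the unmollified field must be comparable to that of the mollified one. We would argue that the unmollified field $\widetilde{D}(\sum b_{\xi,k-1}\Phi^0_{\xi,k-1})$ oscillates at frequency $N_{k-1}$. This frequency is much smaller than the inverse mollification scale $N_k^{1/3}N_{k-1}^{2/3}$, so mollification changes it only by a relatively small amount. Hence, for $A$ large, the two $L^\infty$ norms are comparable. If that comparison is awkward, we would instead read the lemma's implicit constant as allowing $\epsilon$ to mean the unmollified norm; the two agree up to constants by the same argument.

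With the mollifier bound in hand, $\nabla b_{\xi,k}$ is bounded by $N_k\epsilon^{1/2}\cdot N_k^{1/3}N_{k-1}^{2/3}$, which is (\ref{e nabla b}). Differentiating (\ref{defi Sk}) and using $\|b\nabla b\|_{L^\infty}\lesssim N_k^2\epsilon\,N_k^{1/3}N_{k-1}^{2/3}$ gives the bound for $\nabla s_k$. Finally, $\nabla a_{\xi,k}$ is handled the same way. Its derivative lands on $\chi^V_k$, on $\mathcal{D}\psi^0_{k-1}$ and $\mathcal{D}\theta^0_{k-1}$ (both mollified at the same scale), or on $s_k$. Each term gains at most $N_k^{1/3}N_{k-1}^{2/3}$ relative to $l_k$, and this yields (\ref{e nabla a}).
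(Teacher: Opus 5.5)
Your proposal is essentially the paper's proof. You read off the $L^\infty$ bounds for $b_{\xi,k}$, $s_k$, $a_{\xi,k}$ in that order from the bounds on $A_{\xi,k},B_{\xi,k},\Gamma_\xi,\gamma_\xi$, and you get the gradient bounds from (\ref{e chi}) plus $\|\nabla\widetilde{D}\phi^0_{k-1}\|_{L^\infty}\lesssim N_k^{1/3}N_{k-1}^{2/3}\|\widetilde{D}\phi^0_{k-1}\|_{L^\infty}$. The paper simply asserts that last bound as a ``standard mollifier estimate'' and never addresses the mollified-versus-unmollified issue you correctly flag.

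Your comparability fix works, but "changes it only by a relatively small amount" tacitly needs a lower bound on the unmollified field. The simplest way to supply it is a joint induction with the two-sided bounds of Proposition \ref{prop of Dpsi} at indices $k-2,k-1$. With $f=\widetilde{D}\sum_{\xi\in\Lambda_B}b_{\xi,k-1}\Phi^0_{\xi,k-1}$ these give $\|\nabla\widetilde{D}\phi^0_{k-1}\|_{L^\infty}\le\|\nabla f\|_{L^\infty}\lesssim N_{k-1}\|\widetilde{D}\phi^0_{k-2}\|_{L^\infty}^{1/2}\lesssim N_{k-1}\|\widetilde{D}\phi^0_{k-1}\|_{L^\infty}$. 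The same comparability, via $\|\widetilde{D}\phi^0_{k-1}\|_{L^\infty}\lesssim\|\mathcal{D}\theta^0_{k-1}\|_{L^\infty}\le\varepsilon_0 l_k$, is also what justifies your bound on $l_k^{-1}\|\nabla s_k\|_{L^\infty}$ inside $\nabla a_{\xi,k}$.
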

\begin{proof}
    Combining (\ref{e Bxi}), (\ref{defi of b}) and (\ref{gamma jie}), it is rather easy to get (\ref{e S}) and (\ref{e b}). Similarly, (\ref{e Axi}), (\ref{defi of a}), (\ref{e S}) and (\ref{Gamma xiajie}) imply (\ref{e a}).
  
   With a standard mollifier estimate , we have
   \begin{gather}
       \|\nabla\widetilde{D}\phi_{k-1}^0\|_{L^\infty(\mathbb{T}^5)}\lesssim N_k^{\frac{1}{3}}N_{k-1}^{\frac{2}{3}}\|\widetilde{D}\phi_{k-1}^0\|_{L^\infty(\mathbb{T}^5)}. \label{e mollification}
   \end{gather}
   Thus, combining (\ref{e Bxi}), (\ref{e chi}), (\ref{defi of b}) and (\ref{e mollification}), we obtain
   \begin{gather*}
       \|\nabla b_{\xi,k}\|_{L^{\infty}(\mathbb{T}^5)}\lesssim N_k(\|\widetilde{D}\phi_{k-1}^0\|_{L^\infty(\mathbb{T}^5)})^{\frac{1}{2}}N_k^{\frac{1}{3}}N_{k-1}^{\frac{2}{3}}\lesssim N_k^{\frac{4}{3}}N_{k-1}^{\frac{2}{3}}(\|\widetilde{D}\phi_{k-1}^0\|_{L^\infty(\mathbb{T}^5)})^{\frac{1}{2}}.
   \end{gather*}
   Moreover, due to (\ref{e Bxi}), (\ref{defi Sk}) and (\ref{e nabla b}), we have
   \begin{gather*}
        \|\nabla s_k\|_{L^{\infty}(\mathbb{T}^5)}\lesssim  N_k^{\frac{1}{3}}N_{k-1}^{\frac{2}{3}}\|\widetilde{D}\phi_{k-1}^0\|_{L^\infty(\mathbb{T}^5)}.
   \end{gather*}
   
   The proof of (\ref{e nabla a}) is similar, here we omit the detail.
\end{proof}

Because $\|\widetilde{\mathcal{D}}\phi^0_{k-1}\|_{L^{\infty}}$, $\|{\mathcal{D}}\theta^0_{k-1}\|_{L^{\infty}}$ and $\|{\mathcal{D}}\psi^0_{k-1}\|_{L^{\infty}}$ appear in the denominator, it is necessary to estimate their lower bounds. In fact, the following proposition shows that their magnitudes are essentially constant.
\begin{prop}\label{prop of Dpsi}
    There exists a universal constant $C$ such that the following estimates hold
    \begin{gather}
        C^{-2}(C^2\|\widetilde{\mathcal{D}}\phi^0_0\|_{L^{\infty}})^{2^{-k}}\leq \|\widetilde{\mathcal{D}}\phi^0_k\|_{L^{\infty}}\leq C^{2}(C^2\|\widetilde{\mathcal{D}}\phi^0_0\|_{L^{\infty}})^{2^{-k}},\quad\forall k\in\mathbb{N},\label{estimate Dphi}\\
        C^{-2}(C^2\|\widetilde{\mathcal{D}}\phi^0_0\|_{L^{\infty}})^{2^{-k}}\leq \|{\mathcal{D}}\theta^0_k\|_{L^{\infty}}\leq C^{2}(C^2\|\widetilde{\mathcal{D}}\phi^0_0\|_{L^{\infty}})^{2^{-k}},\quad\forall k\in\mathbb{N},\label{e Dtheta}\\
        C^{-2}(C^2\|{\mathcal{D}}\psi^0_0\|_{L^{\infty}})^{2^{-k}}\leq \|{\mathcal{D}}\psi^0_k\|_{L^{\infty}}\leq C^{2}(C^2(\|{\mathcal{D}}\psi^0_0\|_{L^{\infty}}+\|{\widetilde{\mathcal{D}}}\phi^0_0\|_{L^{\infty}}))^{2^{-k}},\quad\forall k\in\mathbb{N}.\label{estimate Dpsi}
    \end{gather}
\end{prop}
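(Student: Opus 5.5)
We aim for a one-step recursion: there is a universal $C_0$ with
\begin{gather*}
C_0^{-1}\|\widetilde{\mathcal{D}}\phi^0_{k-1}\|_{L^\infty}^{1/2}\le \|\widetilde{\mathcal{D}}\phi^0_k\|_{L^\infty}\le C_0\|\widetilde{\mathcal{D}}\phi^0_{k-1}\|_{L^\infty}^{1/2},
\end{gather*}
and the analogous bound for $\mathcal{D}\theta^0_k$. For $\psi$ we expect the upper bound to involve $\|\mathcal{D}\psi^0_{k-1}\|_{L^\infty}+\|\mathcal{D}\theta^0_{k-1}\|_{L^\infty}+\|\widetilde{\mathcal{D}}\phi^0_{k-1}\|_{L^\infty}$ under the square root, and the lower bound only $\|\mathcal{D}\psi^0_{k-1}\|_{L^\infty}^{1/2}$. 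Iterating $x_k\sim C_0 x_{k-1}^{1/2}$ gives $x_k\le C_0^{1+1/2+\dots}x_0^{2^{-k}}\le C_0^2 x_0^{2^{-k}}$; conversely $x_k\ge C_0^{-2}x_0^{2^{-k}}$. We then rewrite this in the stated normalized form $C^{\pm2}(C^2x_0)^{2^{-k}}$, choosing $C\ge C_0$ and absorbing the harmless factor $C^{2\cdot2^{-k}}\in[1,C^2]$.

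\emph{Upper bound.} Since $\phi^0_k=\eta_k*\sum_\xi b_{\xi,k}\Phi^0_{\xi,k}$, mollification does not increase $L^\infty$ norms. Applying the Leibniz rule to $\nabla(b_{\xi,k}\Phi^0_{\xi,k})$ gives $\|b_{\xi,k}\|_{L^\infty}\|\nabla\Phi^0_{\xi,k}\|_{L^\infty}+\|\nabla b_{\xi,k}\|_{L^\infty}\|\Phi^0_{\xi,k}\|_{L^\infty}$. By (\ref{e b}), (\ref{e Phi}) and (\ref{e nabla b}), this is bounded by
\begin{gather*}
\bigl(N_k\cdot N_k^{-1}+N_k^{4/3}N_{k-1}^{2/3}N_k^{-2}\bigr)\|\widetilde{\mathcal{D}}\phi^0_{k-1}\|_{L^\infty}^{1/2}\lesssim \|\widetilde{\mathcal{D}}\phi^0_{k-1}\|_{L^\infty}^{1/2}.
\end{gather*}
The same argument handles $\theta$. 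For $\psi$ we use (\ref{e a}) and (\ref{e nabla a}) in the same way. The $\psi$ upper bound then iterates with the sum, which we control using the already established bounds for $\phi$ and $\theta$. Each of these sums is at most $C(x_0+y_0)^{2^{-k}}$, because $\|\mathcal{D}\theta^0_k\|_{L^\infty}$ is comparable to $\|\widetilde{\mathcal{D}}\phi^0_k\|_{L^\infty}$.

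\emph{Lower bound (the main obstacle).} We want to evaluate the top-order term at a well-chosen point. We split
\begin{gather*}
\nabla(b_{\xi,k}\Phi^0_{\xi,k})=N_k^{-1}b_{\xi,k}\varphi_{\xi,k}\cos(N_k(x-x_\xi)\cdot\xi)\,\xi_1\otimes\xi+O\bigl(N_k^{-2}M_k|b|+N_k^{-2}|\nabla b|\bigr).
\end{gather*}
The error is relatively small by the factor $M_k/N_k+N_{k-1}^{2/3}N_k^{-2/3}\ll1$ once $A$ is large. Convolution with $\eta_k$ has length scale $N_{k+1}^{-1/3}N_k^{-2/3}\ll N_k^{-1}$, so it perturbs the oscillating term only by a relatively small amount. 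The antisymmetrized tensor $\xi_1\otimes\xi-\xi\otimes\xi_1$ is nonzero because $\xi\perp\xi_1$ are independent unit directions.

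We then pick $x$ in $\Omega^1_{k-1}$, where $\chi^B_k=1$, with $\varphi_{\xi,k}(x)=1$ (the point on the core line) and $\cos=1$. At such a point,
\begin{gather*}
b_{\xi,k}\gtrsim N_k\|\widetilde{\mathcal{D}}\phi^0_{k-1}\|_{L^\infty}^{1/2}\,\gamma_\xi(\cdot)\gtrsim N_k\|\widetilde{\mathcal{D}}\phi^0_{k-1}\|_{L^\infty}^{1/2},
\end{gather*}
by the lower bound $\gamma_\xi\ge c>0$ from the geometric lemma (\ref{gamma jie}) and (\ref{e Bxi}). Supports of distinct $\xi$ are disjoint, so no cancellation occurs. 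The same argument gives the lower bound for $\theta$ (with the tensor $\xi_2$) and for $\psi$, using $\Gamma_\xi\ge c$ from (\ref{Gamma xiajie}) and $l_k\ge\varepsilon_0^{-1}\|\mathcal{D}\psi^0_{k-1}\|_{L^\infty}$.

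Two points need care. First, the base case $k=0$ has no amplitude functions; there the two-sided estimate is simply a constant comparison. Second, we must check that the support point lies where $\chi^B_k=1$: the core of $\varphi_{\xi,k}$ must lie inside $\Omega^1_{k-1}$. This is the delicate point, and we would verify it through the definitions of the $\Omega^j$ neighborhoods and the nesting of the supports.
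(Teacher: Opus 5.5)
Your proposal is correct and is essentially the paper's own argument. The paper also splits $\mathcal{D}\psi^0_k$ into three pieces. The top-order piece, where $\mathcal{D}$ hits the sine, is further split into the unmollified term $I_1$ and the mollification error $\eta_k*I_1-I_1$, bounded by $\|\nabla I_1\|_{L^\infty}N_{k+1}^{-1/3}N_k^{-2/3}$. The lower-order pieces are where $\mathcal{D}$ hits $\varphi_{\xi,k}$ or the amplitude. The paper then evaluates $I_1$ at $x_\xi$ using $\varphi_{\xi,k}(x_\xi)=\chi_k(x_\xi)=\cos 0=1$ and the lower bound on $\Gamma_\xi$, and runs the induction directly on the stated bounds with absolute rather than relative error estimates. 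Your one-step recursion followed by iteration is therefore only a cosmetic reorganization.

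Two small remarks. First, the point you flag as delicate is immediate: $\mathrm{supp}\,\varphi_{\xi,k}$ is the $M_k^{-1}$-tube around the same plane $x_\xi+\xi_1\mathbb{R}+\xi_2\mathbb{R}$ for every $k$, so $x_\xi\in\mathrm{supp}\,\varphi_{\xi,k-1}\subset\Omega^1_{k-1}$. Second, at $k=0$ the lower bound for $\theta$ is the exact inequality $\|\widetilde{\mathcal{D}}\phi^0_0\|_{L^\infty}\le\|\mathcal{D}\theta^0_0\|_{L^\infty}$, not a comparison up to constants. It does hold, with equality: $\phi^0_0$ and $\theta^0_0$ are the same scalar profile times $\xi_1$ and $\xi_2$ respectively, and that profile's gradient is orthogonal to both $\xi_1$ and $\xi_2$.
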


\begin{proof}
    The proof of (\ref{estimate Dphi}) and (\ref{e Dtheta}) is similar to \cite[Proposition 3.7]{Coiculescu2025}, here we omit it. In the following, we establish (\ref{estimate Dpsi}) by induction. Noting that (\ref{estimate Dpsi}) is clear for $k=0$. We suppose that (\ref{estimate Dpsi}) holds for $k-1$. We divide $\|{\mathcal{D}}\psi^0_k\|_{L^{\infty}}$ into 3 parts: \textit{I} when $\mathcal{D}$ falls on $\mathrm{sin}$ within $\Psi_{\xi,k}^0$; \textit{II} when it falls on $\varphi_{\xi,k}$ within $\Psi_{\xi,k}^0$; \textit{III} when it falls on $a_{\xi,k}$. Moreover, we proceed to decompose the term $\textit{I}$ into two distinct components:
    \begin{gather}
        I_1=N_k^{-2}\sum_{\xi\in\Lambda_U}\varphi_{\xi,k}(x)a_{\xi,k}(x)\mathcal{D}(\mathrm{sin}(N_k(x-x_\xi)\cdot \xi)\xi_1),\label{defi I1}\\
        I_2=\eta_k*I_1-I_1.
    \end{gather}
    Using (\ref{defi of a}) and the fact that $\{\varphi_{\xi,k}\}$ have disjoint support, we get
    \begin{align*}
        &\quad\, I_1|_{\mathrm{supp}\varphi_{\xi,k}}\\&=-\varphi_{\xi,k}(x)(\frac{2l_k}{A_{\xi,k}})^{\frac{1}{2}}\chi_k(x)\Gamma_\xi(\mathrm{Id}+l_k^{-1}(\mathcal{D}\psi^0_{k-1}+\mathcal{D}\theta^0_{k-1}-s_{k})\mathrm{cos}(N_k(x-x_\xi)\cdot \xi)(\xi\otimes\xi_1+\xi_1\otimes\xi).
    \end{align*}
    Since $\varphi_{\xi,k}(x_{\xi})=\chi_k(x_\xi)=\mathrm{cos}(N_k(x_{\xi}-x_\xi)\cdot \xi)=1$, (\ref{Gamma xiajie}) and induction hypothesis, by choosing sufficiently large $C$, we obtain
    \begin{gather}
        \|I_1\|_{L^\infty(\mathbb{T}^5)}\gtrsim l_k\gtrsim(\|\mathcal{D}\psi^0_{k-1}\|_{L^\infty(\mathbb{T}^5)})^{\frac{1}{2}}\geq100C^{-2}(C^2\|{\mathcal{D}}\psi^0_0\|_{L^{\infty}})^{2^{-k}}.\label{e I1low}
    \end{gather}
    
    On the other hand, combining (\ref{e a}), (\ref{defi I1}) and the induction hypothesis, we achieve
    \begin{align}
        \|I_1\|_{L^\infty(\mathbb{T}^5)}&\lesssim(\|\mathcal{D}\psi^0_{k-1}\|_{L^\infty(\mathbb{T}^5)}+\|\mathcal{D}\theta^0_{k-1}\|_{L^\infty(\mathbb{T}^5)}+\|\widetilde{D}\phi_{k-1}^0\|_{L^\infty(\mathbb{T}^5)})^{\frac{1}{2}}\label{e I1upper}\\&\lesssim \frac{1}{2} C^{2}(C^2(\|{\mathcal{D}}\psi^0_0\|_{L^{\infty}}+\|{\widetilde{\mathcal{D}}}\phi^0_0\|_{L^{\infty}}))^{2^{-k}}.\nonumber
    \end{align}
    And (\ref{e varphi}), (\ref{e nabla a}), (\ref{defi I1}) and the induction hypothesis imply
    \begin{gather}
        \|\nabla I_1\|_{L^\infty(\mathbb{T}^5)}\lesssim(\|\mathcal{D}\psi^0_{k-1}\|_{L^\infty(\mathbb{T}^5)}+\|\mathcal{D}\theta^0_{k-1}\|_{L^\infty(\mathbb{T}^5)}+\|\widetilde{D}\phi_{k-1}^0\|_{L^\infty(\mathbb{T}^5)})^{\frac{1}{2}}(M_k+N_k^{\frac{1}{3}}N_{k-1}^{\frac{2}{3}}+N_k)\lesssim N_k.
    \end{gather}
    Here, we choose $C$ to be sufficiently large.
    Thus, by a standard mollifier estimate, we get
    \begin{gather}
        \|I_2\|_{L^\infty(\mathbb{T}^5)}\lesssim \|\nabla I_1\|_{L^\infty(\mathbb{T}^5)}N_{k+1}^{-\frac{1}{3}}N_k^{-\frac{2}{3}}\lesssim(\frac{N_k}{N_{k+1}})^{\frac{1}{3}}.\label{e I2}
    \end{gather}
    
    Next, we consider \textit{II} and \textit{III}. From (\ref{e varphi}), (\ref{e a}), (\ref{e nabla a}) and the induction hypothesis, we can conclude that
    \begin{gather}
        \| \textit{II}\,\|_{L^\infty(\mathbb{T}^5)}\lesssim \frac{M_k}{N_k},\label{e II}\\
        \| \textit{III}\,\|_{L^\infty(\mathbb{T}^5)}\lesssim (\frac{N_{k-1}}{N_k})^{\frac{2}{3}}.\label{e III}
    \end{gather}

    Finally, combining (\ref{e I1low}), (\ref{e I1upper}), (\ref{e I2}), (\ref{e II}) and (\ref{e III}) together, we obtain (\ref{estimate Dpsi}) by choosing $A$ sufficiently large.
\end{proof}

\begin{prop} \label{prop nablampsi}
   For any $m\geq0$, it holds
   \begin{gather}
       \|\nabla^m(a_{\xi,k},b_{\xi',k})\|_{L^{\infty}(\mathbb{T}^5)}\lesssim_m N^m_{k-1}N_k,\quad\forall (\xi,\xi')\in\Lambda_U\times\Lambda_B,\quad\forall k\geq1,\label{e nablbma}\\
        \|\nabla^m(\psi^0_k,\phi^0_k,\theta^0_k)\|_{L^{\infty}(\mathbb{T}^5)}\lesssim_m N_k^{-1+m},\quad\forall k\geq 0.\label{e nablampsi}
    \end{gather}
\end{prop}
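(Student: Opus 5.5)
We argue by induction on $k$, proving \eqref{e nablbma} for level $k$ from \eqref{e nablampsi} at level $k-1$, and then \eqref{e nablampsi} at level $k$ from \eqref{e nablbma} at level $k$.

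\emph{Base case.} For $k=0$, the functions $\psi^0_0,\phi^0_0,\theta^0_0$ are $N_0\eta_0$ convolved with a single Mikado potential. By \eqref{e Psi}--\eqref{e Theta} and the fact that convolution with $\eta_0$ does not increase $L^\infty$ norms of derivatives, we get $\|\nabla^m\psi^0_0\|_{L^\infty}\lesssim N_0\cdot N_0^{-2+m}=N_0^{-1+m}$, and the same for $\phi^0_0,\theta^0_0$.

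\emph{Bound on the coefficients.} Assume \eqref{e nablampsi} at level $k-1$. Then for every $m$,
\[
\|\nabla^m\mathcal{D}\psi^0_{k-1}\|_{L^\infty},\ \|\nabla^m\mathcal{D}\theta^0_{k-1}\|_{L^\infty},\ \|\nabla^m\widetilde{\mathcal{D}}\phi^0_{k-1}\|_{L^\infty}\lesssim_m N_{k-1}^m .
\]
Proposition \ref{prop of Dpsi} shows that the normalizing quantities $\|\widetilde{\mathcal{D}}\phi^0_{k-1}\|_{L^\infty}$, $l_k$ and $B_{\xi,k}$, $A_{\xi,k}$ (via \eqref{e Axi}, \eqref{e Bxi}) are bounded above and below by universal constants. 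This is the step where the lower bounds are essential, because these quantities sit in denominators. Both $b_{\xi,k}$ and $s_k$ are then $N_k$ times a smooth function ($\gamma_\xi$, applied on a compact set where its argument stays) of an argument with $C^m$ norm $\lesssim N_{k-1}^m$, multiplied by $\chi_k^B$. Hence $a_{\xi,k}$ is $N_k$ times $\chi^V_k\,\Gamma_\xi(\mathrm{Id}+l_k^{-1}(\dots))$, whose argument has $C^m$ norm $\lesssim N_{k-1}^m$ by the bound on $s_k$. For $\Gamma_\xi$ and $\gamma_\xi$ we apply the Fa\`a di Bruno formula with the smoothness of these maps on the $\varepsilon_0$-neighbourhood of $\mathrm{Id}$.

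\emph{The cutoff derivatives (main obstacle).} By \eqref{e chi}, derivatives falling on $\chi_k^V$ or $\chi_k^B$ cost $N_k^{1/3}N_{k-1}^{2/3}$, which is much larger than $N_{k-1}$. So the crude bound only gives $(N_k^{1/3}N_{k-1}^{2/3})^m N_k$. To obtain \eqref{e nablbma} as stated, I would first note that the argument of $\Gamma_\xi$ and $\gamma_\xi$ is supported near $\Omega_{k-1}$. The fields $\psi^0_{k-1}$ etc.\ are supported in the $N_k^{-1/3}N_{k-1}^{-2/3}$-neighbourhood of $\Omega_{k-1}^0$ because of the mollification, and the cutoffs equal $1$ on a larger set. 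The point is then to show that $\chi^V_k$ and $\chi^B_k$ multiply $a_{\xi,k}$ and $b_{\xi,k}$ only in order to localize. On $\mathrm{supp}\,\varphi_{\xi,k}$, which is where $a_{\xi,k}$ is actually used, the cutoff is locally constant whenever $\mathrm{supp}\,\varphi_{\xi,k}$ lies inside the region where $\chi=1$. If this localization argument fails, I would fall back on proving the statement with $N_{k-1}$ replaced by $N_k^{1/3}N_{k-1}^{2/3}$. That weaker bound is still $\ll N_k^m$, and it is all that is needed for \eqref{e nablampsi}.

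\emph{Closing the induction.} For \eqref{e nablampsi} at level $k$, expand $\nabla^m(a_{\xi,k}\Psi^0_{\xi,k})$ by the Leibniz rule, using \eqref{e Psi} together with the bound on $a_{\xi,k}$ (or its weaker substitute). Since $N_{k-1}\le N_k^{1/3}N_{k-1}^{2/3}\le N_k$, each term is bounded by $N_k\cdot N_k^{j}\cdot N_k^{-2+m-j}=N_k^{-1+m}$. Convolution with $\eta_k$ preserves these $L^\infty$ bounds, and the sum over the finite sets $\Lambda_U$ and $\Lambda_B$ only costs a constant. The same computation applies verbatim to $\phi^0_k$ and $\theta^0_k$ with $b_{\xi,k}$.
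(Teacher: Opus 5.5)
Your diagnosis of the cutoffs is right, and it settles the matter: \eqref{e nablbma} is false on $\mathbb{T}^5$ for every $m\geq1$, so no localization argument can give it as stated. The paper's one-line proof (definitions, Proposition \ref{prop of Dpsi}, Leibniz rule) does not account for the derivatives of $\chi^V_k,\chi^B_k$. It also contradicts the paper's own bounds \eqref{e nabla a}, \eqref{e nabla b}, which carry $N_k^{4/3}N_{k-1}^{2/3}$.

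Concretely, set $r=N_k^{-1/3}N_{k-1}^{-2/3}$. The fields $\psi^0_{k-1},\theta^0_{k-1}$ are supported in $\overline{\Omega^1_{k-1}}$ and $s_k$ in $\overline{\Omega^2_{k-1}}$. So on the layer $\Omega^3_{k-1}\setminus\overline{\Omega^2_{k-1}}$, of width $r$, one has $a_{\xi,k}=N_k(2l_k/A_{\xi,k})^{1/2}\Gamma_\xi(\mathrm{Id})\chi^V_k$. The prefactor is $\gtrsim N_k$ by Proposition \ref{prop of Dpsi}, \eqref{e Axi} and \eqref{Gamma xiajie}. Since $\chi^V_k$ drops from $1$ to $0$ across this layer, the mean value theorem gives $\|\nabla a_{\xi,k}\|_{L^\infty}\gtrsim N_k r^{-1}=N_k^{4/3}N_{k-1}^{2/3}$. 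This exceeds $N_{k-1}N_k$ by the unbounded factor $(N_k/N_{k-1})^{1/3}$. The same happens for $b_{\xi,k}$ on $\Omega^2_{k-1}\setminus\overline{\Omega^1_{k-1}}$, where it equals a constant $\gtrsim N_k$ times $\gamma_\xi(0)\chi^B_k$.

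What is true is:
(i) your fallback, $\|\nabla^m(a_{\xi,k},b_{\xi',k})\|_{L^\infty(\mathbb{T}^5)}\lesssim_m (N_k^{1/3}N_{k-1}^{2/3})^m N_k$; and
(ii) the bound $N_{k-1}^mN_k$ on $\Omega^1_{k-1}$, where both cutoffs are identically $1$.

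Since $x_\xi$ does not depend on $k$, \eqref{supp defi} gives $\mathrm{supp}\,\varphi_{\xi,k}\subset\mathrm{supp}\,\varphi_{\xi,k-1}\subset\Omega^1_{k-1}$, so (ii) is exactly what your localization yields. Your coefficient step needs this localization too. Because $s_k$ contains $(\chi^B_k)^2$, the argument of $\Gamma_\xi$ has $C^m$ norm $\lesssim N_{k-1}^m$ only on $\Omega^1_{k-1}$, not globally as you assert. Also, $\mathrm{supp}\,\varphi_{\xi,k}$ is not the only place the coefficients are used. The squares $a_{\xi,k}^2$, $b_{\xi,k}^2$ appear without a factor $\varphi_{\xi,k}$ in $\bar v_{k-1}$, $\bar b_{k-1}$ and $\mathcal{N}_{k,1}$, and there only (i) is available.

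Your Leibniz computation for \eqref{e nablampsi} is fine with either (i) or (ii). However, the induction as organized does not give constants uniform in $k$. The order-$j$ constant for $a_{\xi,k}$ depends on the order-$(j+1)$ constant for $\psi^0_{k-1}$. When you bound the $j\geq1$ terms crudely by $N_k\cdot N_k^j\cdot N_k^{-2+m-j}$, you discard the smallness of $N_{k-1}/N_k$ that would damp this dependence, so the constants can compound as $k$ grows. You could keep that factor and absorb it by taking $A$ large.

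A simpler route is the non-inductive one implicit in the paper's proof of \eqref{e nabla a}. Since $\eta_{k-1}$ has scale $r$,
$\|\nabla^n\mathcal{D}\psi^0_{k-1}\|_{L^\infty}\lesssim_n r^{-n}\|\mathcal{D}\sum_\xi a_{\xi,k-1}\Psi^0_{\xi,k-1}\|_{L^\infty}\lesssim_n r^{-n}$,
with $k$-independent constants by \eqref{e Psi}, \eqref{e a}, \eqref{e nabla a} and Proposition \ref{prop of Dpsi}. The same holds for $\theta^0_{k-1}$ and $\phi^0_{k-1}$. Every derivative of the arguments of $\Gamma_\xi$ and $\gamma_\xi$ then costs $r^{-1}$, the same as a derivative of the cutoffs. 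So Fa\`a di Bruno and \eqref{e chi} give (i) directly with uniform constants, and \eqref{e nablampsi} follows from your Leibniz step. Your inductive approach buys the sharper local rate (ii). That is useful later for terms carrying $\varphi_{\xi,k}$, such as $v^e_k$ and $b^e_k$, but it is not needed for \eqref{e nablampsi}.
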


\begin{proof}
    (\ref{defi of a}), (\ref{defi of b}), Proposition \ref{prop of Dpsi} and the Leibnitz rule imply the above results.
\end{proof}

\begin{prop}
Given $\psi^0_k$, $\phi^0_k$ and $\theta^0_k$ in Definition \ref{defi of psiok}, we have
    \begin{gather}
        \sum_{k=0}^\infty\mathrm{curl}(\psi^0_k+\theta^0_k)\in L^1(\mathbb{T}^5)\cap BMO,\\
        \sum_{k=0}^\infty\mathrm{curl}\,\phi_k^0\in L^1(\mathbb{T}^5)\cap BMO.
    \end{gather}
\end{prop}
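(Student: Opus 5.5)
We follow the strategy of \cite[Proposition 3.8]{Coiculescu2025}, treating $\psi^0_k+\theta^0_k$ and $\phi^0_k$ in the same way. Set $f_k\triangleq \mathrm{curl}(\psi^0_k+\theta^0_k)$ (resp. $\mathrm{curl}\,\phi^0_k$). Here $\mathrm{curl}$ denotes the antisymmetric gradient, so by (\ref{e nablampsi}) with $m=1$,
\begin{gather*}
\|f_k\|_{L^\infty(\mathbb{T}^5)}\lesssim 1,\qquad \|\nabla f_k\|_{L^\infty(\mathbb{T}^5)}\lesssim N_k,
\end{gather*}
uniformly in $k$. Moreover, since $\eta_k$ has length scale $N_{k+1}^{-1/3}N_k^{-2/3}$ and the amplitudes $a_{\xi,k},b_{\xi,k}$ carry the cutoffs $\chi^V_k,\chi^B_k$ (supported in $\Omega^3_{k-1}$), while the $\varphi_{\xi,k}$ are supported in $\cup_\xi\mathrm{supp}\,\varphi_{\xi,k}$, we get
\begin{gather*}
\mathrm{supp}\, f_k\subset \Omega^1_k .
\end{gather*}
The $L^1$ part is then immediate:
\begin{gather*}
\sum_k\|f_k\|_{L^1}\lesssim\sum_k|\Omega^3_k|\lesssim \sum_k M_k^{-3}<\infty .
\end{gather*}

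For $BMO$, fix a cube (or ball) $Q\subset\mathbb{T}^5$ of side $r$, and bound
\begin{gather*}
\fint_Q\Big|\sum_k f_k-c_Q\Big|dx
\end{gather*}
by splitting the sum at two thresholds. For $k$ with $N_k r\le 1$ (low frequencies), we subtract the means and use the Lipschitz bound:
\begin{gather*}
\fint_Q|f_k-(f_k)_Q|\lesssim rN_k .
\end{gather*}
Since $N_k$ grows superexponentially, summing over all such $k$ gives at most a constant times the largest term $rN_{k_*}\le1$. For the remaining $k$ we use only the $L^\infty$ bound and the support:
\begin{gather*}
\fint_Q|f_k|\lesssim \frac{|\Omega^3_k\cap Q|}{|Q|}.
\end{gather*}
If $k\ge k(Q)$, property 4 of Lemma \ref{lemma divfree} gives the bound $2^{k(Q)-k}$, which is summable with sum $\le 2$. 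For the finitely many $k$ with $N_kr>1$ but $k<k(Q)$, we bound each term trivially by $\|f_k\|_{L^\infty}\lesssim1$ and must show there are only $O(1)$ such indices.

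\textbf{Main obstacle.} The hard step is this counting: we must show that the window between $N_k r>1$ and $|\Omega^3_k|>|Q|\sim r^5$ contains only boundedly many $k$. Using $|\Omega^3_k|\sim M_k^{-3}$, the condition $k<k(Q)$ means $M_{k}^{-3}\gtrsim r^5$, while $N_k r>1$ means $r>N_k^{-1}\approx M_k^{-\gamma}$. Together these force
\begin{gather*}
M_k^{-3}\gtrsim M_{k'}^{-5\gamma}
\end{gather*}
for the smallest such index $k'$. Since $M_{k+1}=M_k^b$ with $b>10$, while the exponents involved ($3$ and $5\gamma$) are fixed, only a bounded number of indices, $O(1)$ depending on $b,\gamma$, lie in that window. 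If this count turns out to be too crude, the fallback is a more refined bound on the overlap of $Q$ with the tube-like set $\Omega^3_k$. Since $\Omega^3_k$ is a neighbourhood of thickness $\sim M_k^{-1}$ in 3 directions, a cube of side $r\ge M_k^{-1}$ meets it in measure $\lesssim r^2M_k^{-3}$, which gives $\fint_Q|f_k|\lesssim (rM_k)^{-3}$. This decays geometrically in $k$ once $r\gtrsim M_k^{-1}$, and for $M_k^{-1}>r>N_k^{-1}$ there is at most one index $k$, because $N_k<M_{k+1}$.

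Combining the three ranges gives a bound on $\fint_Q|\sum_k f_k-c_Q|$ that is uniform in $Q$, and hence $\sum_k f_k\in BMO$.
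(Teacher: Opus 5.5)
Your proof is correct and follows essentially the same route as the paper's (itself adapted from \cite[Proposition 3.8]{Coiculescu2025}): uniform $L^\infty$ bounds and $O(N_k)$ Lipschitz bounds on $\mathrm{curl}(\psi^0_k+\theta^0_k)$ and $\mathrm{curl}\,\phi^0_k$, together with their support in $\Omega_k^3$, give the $L^1$ bound, and for $BMO$ you combine a Lipschitz mean-oscillation estimate on the low modes with Property 4 of Lemma \ref{lemma divfree} on the high modes. The differences are minor: you split at the frequency threshold $N_k r\le 1$ and count the $O(1)$ indices lying below $k(Q)$, which avoids the paper's implicit use of $N_{k(Q)-2}\lesssim M_{k(Q)-1}^{3/5}$, and cubes of side larger than the period (the paper's Case 1) should be handled explicitly with your $L^1$ bound.
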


\begin{proof}
    For simplicity, $\zeta_k$ denotes $\mathrm{curl}(\psi^0_k+\theta^0_k)$ or $\mathrm{curl}\,\phi_k^0$ 
    and $\zeta$ denotes $\sum_{k=0}^\infty\zeta_k$ in the following.
    Due to Proposition \ref{prop of Dpsi}, there exists a constant $C_\zeta$ such that $\|\zeta_k\|_{L^{\infty}(\mathbb{T}^5)}\leq C_\zeta$ for $k\in\mathbb{N}$. Recalling $k(Q)$ defined in Property 4 of Lemma \ref{lemma divfree}. Thus, using (\ref{e Omegak}) and the fact that $\{\zeta_k\}$ are smooth, we have
    \begin{align*}
        \sum_{k=0}^\infty \|\zeta_k\|_{L^{1}(\mathbb{T}^5)}&\leq \sum_{k=0}^{k(\mathbb{T}^5)-1}\|\zeta_k\|_{L^{1}(\mathbb{T}^5)}+\sum_{k=k(\mathbb{T}^5)}^\infty\|\zeta_k\|_{L^{\infty}(\mathbb{T}^5)}|\mathrm{supp}\,\zeta_k|\\
        &\lesssim \sum_{k=0}^{k(\mathbb{T}^5)-1} \|\zeta_k\|_{L^{1}(\mathbb{T}^5)}+\sum_{k=k(\mathbb{T}^5)}^\infty 2^{k(\mathbb{T}^5)-k}|\mathbb{T}^5|\\
        &<\infty.
    \end{align*}

  We recall the definition of \textit{BMO} space. For any $f\in L^1(\mathbb{T}^5)$, we identify $f$ as a periodic function on $\mathbb{R}^5$ and $f\in BMO$ if 
  \begin{gather*}
      \|f\|_{BMO}\triangleq\sup_{Q\subset\mathbb{R}^d}\fint_Q|f-f_Q|<\infty.
  \end{gather*}
  
  Fix a cube $Q\subset \mathbb{R}^d$. We proceed to estimate the quantity by dividing into two cases: 
  
  \noindent
  {\bf Case 1: $k(Q)=0$}: Suppose that $Q$ is decomposed into disjoint subsets as follows,
  \begin{gather*}
      Q=(\bigcup_{l=1}^{M}Q_l)\cup (\bigcup_{l=1}^{2^5(M+1)}\widetilde{Q}_l),
  \end{gather*}
  where $Q_l\pmod{(2\pi\mathbb{Z})^d}=\mathbb{T}^5$ and $\widetilde{Q}_l$ denotes the part that is not the 5-dimensional unit cube and interacts with other unit cubes. Here we take the number of sets $\widetilde{Q}_l$ as small as possible, so that it is less than $2^5(M+1)$.

Recalling $k(Q)= \inf\{k\in\mathbb{N}:|\Omega_k^3|\leq|Q|\}$, then we have $|Q|\geq|\widetilde{\Omega}_0|$. Thus, we obtain
\begin{align}
    \fint_Q|\zeta-\zeta_Q|&\leq\frac{2}{|Q|}\int_Q|\zeta|\nonumber\\&\leq\frac{2}{|Q|}\left(M\|\zeta\|_{L^1(\mathbb{T}^5)}+\sum_{l=1}^{2^5(M+1)}\int_{\widetilde{Q}_l}|\zeta|\right)\nonumber\\
    &\leq\frac{2}{|Q|}\left(2^5(M+1)+M\right)\|\zeta\|_{L^1(\mathbb{T}^5)}\nonumber\\
    &\leq
    \begin{cases}
        130|\mathbb{T}^5|^{-1}\|\zeta\|_{L^1(\mathbb{T}^5)}\quad \text{if } M\geq1,\\
        64|\widetilde{\Omega}_0|^{-1}\|\zeta\|_{L^1(\mathbb{T}^5)} \quad \text{if } M=0.
    \end{cases}\label{estimate case1}
\end{align}
Here we use $|Q|>M|\mathbb{T}^5|$ in the last inequality if $M\geq1$.

  \noindent
  {\bf Case 2: $k(Q)\geq1$}: In this case, we have $|Q|\leq|\widetilde{\Omega}_0|$, thus we can assume $Q\subset[0,2\pi]^5$ without loss of generality. Following the same idea of \cite[Proposition 3.8]{Coiculescu2025}, we estimate by dividing into two parts,
  \begin{gather*}
      \fint_Q|\zeta-\zeta_Q|\leq2\fint_Q\sum_{k\geq k(Q)-1}|\zeta_k|+\sum_{k\leq k(Q)-2}\|\zeta_k-(\zeta_k)_Q\|_{L^{\infty}(Q)}= I+II.
  \end{gather*}
  Note that $\|\zeta_k\|_{L^{\infty}(\mathbb{T}^5)}\leq C_\zeta$ for $k\in\mathbb{N}$ and (\ref{e Omegak}), we deduce
  \begin{gather}
      I\lesssim\sum_{k\leq k(Q)-1}\frac{|Q\cap\Omega_k^3|}{|Q|}\lesssim\sum_{k\leq k(Q)-1}2^{k(Q)-k}\lesssim1.\label{estimate I}
  \end{gather}
  To bound \textit{II}, we note that for any $k\leq k(Q)-2$,
  \begin{align*}
      |\zeta_k(x)-(\zeta_k)_Q|=\left|\fint_Q(\zeta_k(x)-\zeta(y))dy\right|\lesssim\|\nabla\zeta_k\|_{L^{\infty}(\mathbb{T}^5)}l(Q)\lesssim N_k |\widetilde{\Omega}_{k(Q)-1}|^{\frac{1}{5}}.
  \end{align*}
  Hence, we have
  \begin{gather}
      II\lesssim \sum_{k\leq k(Q)-2}N_k |\widetilde{\Omega}_{k(Q)-1}|^{\frac{1}{5}}\lesssim N_{k(Q)-2}(M_{k(Q)-1})^{\frac{3}{5}}\lesssim1.\label{estimate II}
  \end{gather}
  
  Combing (\ref{estimate case1}), (\ref{estimate I}) and (\ref{estimate II}) together, we can conclude that $\zeta\in BMO$.
\end{proof}

\subsection{Construction of the principal parts }
\begin{defi}\label{defi of v}
    Given $\psi^0_k$, $\phi^0_k$ and $\theta^0_k$ in Definition \ref{defi of psiok}, we define the initial data
    \begin{gather}
        U^0\triangleq\sum_{k=0}^{\infty}\mathrm{curl}\mathrm{curl}(\psi^0_k+\theta^0_k),\\
        B^0\triangleq \sum_{k=0}^{\infty}\mathrm{curl}\mathrm{curl}\,\phi^0_k.
    \end{gather}
    For any $k\geq0$, we define the heat-dominated evolution
    \begin{align}
        v_k&\triangleq\mathrm{currl}\mathrm{curl}(\psi_k+\theta_k)=\mathrm{currl}\mathrm{curl}(\psi^0_k+\theta^0_k)\mathrm{exp}(-N_k^2t),\\
        b_k&\triangleq\mathrm{currl}\mathrm{curl}\,\phi_k=\mathrm{currl}\mathrm{curl}\,\phi^0_k\mathrm{exp}(-N_k^2t),
    \end{align}
    and the inverse cascade-dominated evolution
    \begin{gather}
        \bar{v}_k\triangleq\frac{1}{2}N_{k+1}^{-2}\mathbb{P}\mathrm{div}\left(\sum_{\xi\in\Lambda_U}A_{\xi,k+1}a_{\xi,k+1}^2\xi_1\otimes\xi_1+\sum_{\xi\in\Lambda_B}B_{\xi,k+1}b_{\xi,k+1}^2(\xi_2\otimes\xi_2-\xi_1\otimes\xi_1)\right)\mathrm{exp}(-2N_{k+1}^2t),\\
        \bar{b}_k\triangleq\frac{1}{2}N_{k+1}^{-2}\mathrm{div}\sum_{\xi\in\Lambda_B}B_{\xi,k+1}b_{\xi,k+1}^2(\xi_2\otimes\xi_1-\xi_1\otimes\xi_2)\mathrm{exp}(-2N_{k+1}^2t).
    \end{gather}
    We define the principal parts of two solutions of (\ref{e:MHDe})
    \begin{gather}
    v^{(1)}\triangleq\sum_{k\geq0\,even}v_k+\sum_{k\geq0\,odd}\bar{v}_k,\quad b^{(1)}\triangleq\sum_{k\geq0\,even}b_k+\sum_{k\geq0\,odd}\bar{b}_k
    \end{gather}
    and 
    \begin{gather}
    v^{(2)}\triangleq\sum_{k\geq0\,odd}v_k+\sum_{k\geq0\,even}\bar{v}_k,\quad b^{(2)}\triangleq\sum_{k\geq0\,odd}b_k+\sum_{k\geq0\,even}\bar{b}_k.
    \end{gather}
\end{defi}

\begin{rema}\label{intialequal}
    Let us show that $(v^{(1)},b^{(1)})(0,\cdot)=(v^{(2)},b^{(2)})(0,\cdot)$.
    To prove this, it suffices to verify that $\bar{v}_k(0,\cdot)=v_k(0,\cdot)$ and $\bar{b}_k(0,\cdot)=b_k(0,\cdot)$.
    By the definition of $a_{\xi,k+1}$ and $s_{k+1}$, we compute
    \begin{align*}
        \bar{v}_k(0,\cdot)&=\mathbb{P}\mathrm{div}\left(\sum_{\xi\in\Lambda_U}l_{k+1}\chi_{k+1}^2\Gamma_\xi^2(\mathrm{Id}+l_{k+1}^{-1}(\mathcal{D}\psi^0_{k}+\mathcal{D}\theta^0_{k}-s_{k+1}))\xi_1\otimes\xi_1+s_{k+1}\right)\\
        &=\mathbb{P}\mathrm{div}\left(\chi_{k+1}^V2(\mathcal{D}\psi^0_{k}+\mathcal{D}\theta^0_{k}-s_{k+1})+s_{k+1}\right)\\
        &=\mathbb{P}\mathrm{div}(\mathcal{D}\psi^0_{k}+\mathcal{D}\theta^0_{k})\\
        &=v_k(0,\cdot)
    \end{align*}
    where we use the fact that $\chi_{k+1}^V$ is identically equal to 1 on support of $\psi^0_{k}$, $\theta^0_{k}$ and $s_{k+1}$ and Lemma \ref{lemmaNS}. Similarly, it is straightforward to check  $\bar{b}_k(0,\cdot)=b_k(0,\cdot)$ by applying the properties of $\chi_{k+1}^B$ and Lemma \ref{lemmaMHD}. We omit the details here.
\end{rema}

\begin{lemm}\label{lemma vk}
    Let $\beta>0$. For any $k,m\in\mathbb{N}$, $v_k$, $\bar{v}_k$, $b_k$ and $\bar{b}_k$ are divergence-free and satisfy the following estimates
    \begin{gather}
        \|\nabla^m(v_k,b_k)\|_{_{L^{\infty}(\mathbb{T}^5)}}\lesssim N_k^{1+m}\mathrm{exp}({-N^2_kt}),\label{e vk}\\
        \|\nabla^m(\bar{v}_k,\bar{b}_k)\|_{_{L^{\infty}(\mathbb{T}^5)}}\lesssim N_k^{1+\beta+m}\mathrm{exp}({-N^2_{k+1}t}).\label{e barvk}
    \end{gather}
\end{lemm}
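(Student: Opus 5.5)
Divergence-freeness follows directly from the structure of each field. Since $v_k$ and $b_k$ are defined as $\mathrm{curl}\,\mathrm{curl}$ of potentials times a function of $t$ only, they are divergence-free: $\mathrm{div}\,\mathrm{curl}=0$, understood in the 5D sense where $\mathrm{curl}\,\mathrm{curl}\, f = \nabla\mathrm{div} f-\Delta f$ is replaced by the antisymmetric-gradient divergence. The field $\bar v_k$ is in the range of the Leray projector $\mathbb{P}$, so it is divergence-free. For $\bar b_k$, it is the divergence of the antisymmetric tensor $\xi_2\otimes\xi_1-\xi_1\otimes\xi_2$ times a scalar, and $\partial_i\partial_j$ contracted against an antisymmetric matrix vanishes.

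For (\ref{e vk}), I would use that $\mathrm{curl}\,\mathrm{curl}$ is a second-order constant-coefficient operator. It therefore commutes with $\exp(-N_k^2t)$, which gives
\[
\|\nabla^m v_k\|_{L^\infty}\le \bigl(\|\nabla^{m+2}\psi^0_k\|_{L^\infty}+\|\nabla^{m+2}\theta^0_k\|_{L^\infty}\bigr)e^{-N_k^2t}.
\]
Applying (\ref{e nablampsi}) from Proposition \ref{prop nablampsi} with $m+2$ derivatives bounds this by $N_k^{1+m}e^{-N_k^2t}$. The argument for $b_k$ is identical, using $\phi^0_k$.

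For (\ref{e barvk}), set $T_k=\sum_\xi A_{\xi,k+1}a_{\xi,k+1}^2\xi_1\otimes\xi_1+\sum_\xi B_{\xi,k+1}b_{\xi,k+1}^2(\cdots)$, so that $\bar v_k=\tfrac12N_{k+1}^{-2}\mathbb{P}\mathrm{div}\,T_k\,e^{-2N_{k+1}^2t}$. The time factor is harmless, since $e^{-2N_{k+1}^2t}\le e^{-N_{k+1}^2t}$. The main obstacle is that $\mathbb{P}$ is not bounded on $L^\infty$. I would handle this in the standard way. First, estimate $\|\nabla^{m}\mathbb{P}\mathrm{div}\,T_k\|_{L^\infty}\lesssim \|\nabla^{m+1}T_k\|_{C^{\beta'}}$ for a small $\beta'>0$, using boundedness of the zeroth-order Calder\'on--Zygmund operator $\mathbb{P}$ on H\"older spaces. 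Note that $\mathbb{P}$ acts on a mean-zero quantity, since it is a divergence on $\mathbb{T}^5$, so there is no low-frequency issue. Then interpolate $\|f\|_{C^{\beta'}}\lesssim\|f\|_{L^\infty}^{1-\beta'}\|\nabla f\|_{L^\infty}^{\beta'}$.

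Next, use the Leibniz rule together with (\ref{e nablbma}) and the bounds (\ref{e Axi}), (\ref{e Bxi}) on the constants $A_{\xi,k+1}$ and $B_{\xi,k+1}$. This gives
\[
\|\nabla^{j}T_k\|_{L^\infty}\lesssim_j N_k^{j}N_{k+1}^2.
\]
Hence $N_{k+1}^{-2}\|\nabla^{m+1}T_k\|_{C^{\beta'}}\lesssim N_k^{1+m+\beta'}$. Choosing $\beta'=\beta$ (or $\beta'\le\beta$ if $\beta$ is large) yields (\ref{e barvk}). The estimate for $\bar b_k$ is the same but simpler, because it has no projector and the $\beta$-loss is unnecessary.

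The only delicate point is the derivative count for $T_k$. Derivatives of $a_{\xi,k+1}$ cost only $N_k$ and not $N_{k+1}$, which relies on the cutoff and mollifier scales in (\ref{e nablbma}). Since that is already given by Proposition \ref{prop nablampsi}, the step to watch is the $C^{\beta}$ boundedness of $\mathbb{P}$, which is where the $N_k^\beta$ loss enters.
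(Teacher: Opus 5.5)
Your proposal is correct and takes the same route as the paper. The paper's proof simply cites Proposition \ref{prop nablampsi} and Definition \ref{defi of v}, and it handles the unboundedness of $\mathbb{P}$ on $L^\infty$ by estimating $\bar v_k$ in $C^\beta$, which is the same source of the $N_k^\beta$ loss you identify; you additionally write out the details the paper omits, namely the $m+2$ derivative count for $v_k,b_k$, the bound $\|\nabla^j(a_{\xi,k+1}^2,b_{\xi,k+1}^2)\|_{L^\infty}\lesssim N_k^jN_{k+1}^2$, and H\"older boundedness of $\mathbb{P}$ plus interpolation.
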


\begin{proof}
    The above estimates are directly derived from Proposition \ref{prop nablampsi} and Definition \ref{defi of v}. More precisely, since $\mathbb{P}$ is not bounded on $L^\infty(\mathbb{T}^5)$, we estimate $\bar{v}_k$ on $C^{\beta}(\mathbb{T}^5)$ instead of $L^\infty(\mathbb{T}^5)$. Thus, $\beta$ appears in (\ref{e barvk}). 
\end{proof}

Applying Lemma \ref{lemma vk} with $\beta<\frac{b-1}{4}\wedge\frac{1}{2}$, we can obtain the following results. Since the proof closely parallels that of \cite[Proposition 3.13]{Coiculescu2025}, the detailed verification is omitted for brevity.

\begin{prop}\label{prop of v}
    For any $i\in\{1,2\}, $ $(v^{(i)},b^{(i)})$ are divergence-free and obey the following estimates
    \begin{gather}
        \|(\nabla^mv^{(i)},\nabla^mb^{(i)})\|_{L^{\infty}(\mathbb{T}^5)}\lesssim_m t^{-\frac{m+1}{2}}\mathrm{exp}({-C(m)N^2_0t})\label{1 estimate1 of v}
    \end{gather}
    for all $m\geq0$ and $t>0$. Moreover, we have
    \begin{gather}
        \|(v^{(i)},b^{(i)})\|_{L^2([t_1,t_2],dt;L^{\infty})}^2+\|(v^{(i)},b^{(i)})\|_{L^1([t_1,t_2],t^{-\frac{1}{2}}dt;L^{\infty})}\lesssim1+(\mathrm{logA})^{-1}(\frac{t_2}{t_1})\label{2 estimate of v}
    \end{gather}
    for any $0<t_1\leq t_2\leq1$.
\end{prop}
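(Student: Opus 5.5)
The plan is to sum the termwise bounds of Lemma \ref{lemma vk} over $k$. Divergence-freeness is immediate, since every $v_k,b_k$ is a curl curl and every $\bar v_k,\bar b_k$ is either a Leray projection or the divergence of an antisymmetric tensor. For fixed $i$ and $m$ we get
\begin{gather*}
\|\nabla^m(v^{(i)},b^{(i)})\|_{L^\infty}\lesssim_m \sum_{k}N_k^{1+m}e^{-N_k^2t}+\sum_k N_k^{1+\beta+m}e^{-N_{k+1}^2t}.
\end{gather*}

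For the first sum we use the elementary bound $N^{1+m}e^{-N^2t}\lesssim_m t^{-\frac{m+1}{2}}e^{-\frac12N^2t}$. Because $N_{k+1}\geq N_k^{b}$ with $b>10$, the sequence is lacunary. Hence at most $O(1)$ terms have $N_k^2t\sim1$, and the others decay super-geometrically, both for $N_k\ll t^{-1/2}$ (these are bounded by $N_k^{1+m}$, and the largest one dominates) and for $N_k\gg t^{-1/2}$. Summing gives $t^{-\frac{m+1}{2}}e^{-cN_0^2t}$, where the exponential factor comes from $N_k\ge N_0$.

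The second sum carries the loss $N_k^{\beta}$ and the slower decay rate $N_{k+1}^2$. We write
\begin{gather*}
N_k^{1+\beta+m}e^{-N_{k+1}^2t}\leq N_{k+1}^{(1+\beta+m)/b}e^{-N_{k+1}^2t},
\end{gather*}
which is bounded by $t^{-\frac{m+1}{2}}e^{-\frac12N_{k+1}^2t}$ times $(N_{k+1}^2t)^{\frac{m+1}{2}}\cdot N_{k+1}^{-(m+1)(1-\frac1b)+\frac{\beta}{b}}$. Since $\beta<\frac{b-1}{4}$, the exponent of $N_{k+1}$ is negative, and the same lacunary summation closes the bound. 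The constant $C(m)$ in \eqref{1 estimate1 of v} absorbs the factors of $\frac12$.

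For \eqref{2 estimate of v} we take $m=0$ and integrate termwise. For the heat part, each $k$ contributes $\int_{t_1}^{t_2}N_k^2e^{-2N_k^2t}\,dt\le\frac12$ to the $L^2_tL^\infty$ norm. The same holds for the $L^1(t^{-1/2}dt)$ norm, using $\int_0^\infty t^{-1/2}N_ke^{-N_k^2t}\,dt\lesssim1$. Such an $O(1)$ contribution arises only when $N_k^2\in[t_2^{-1}/C,\,Ct_1^{-1}]$. Because $\log N_k\sim b^k\log A$, the number of such $k$ is at most $1+\log_b\big(\log(t_2/t_1)/\log A\big)$, which is bounded by $1+(\log A)^{-1}\log(t_2/t_1)$, and in particular by $1+(\log A)^{-1}\frac{t_2}{t_1}$. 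The remaining terms are summable by the same lacunarity argument, since they are off-scale by super-geometric factors. For the $\bar v_k,\bar b_k$ terms the time integrals gain a factor $N_k^{1+\beta}/N_{k+1}$, and the squares gain its square, which is tiny and summable because $\beta<\frac{b-1}{4}$; so these terms add only $O(1)$.

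The main obstacle is the careful counting in the second estimate. One has to show that only the indices with $N_k^{-2}\in[t_1,t_2]$ give order-one contributions, and that the cross terms $\|\sum_k f_k\|_{L^\infty}^2$ in the $L^2_t$ norm are controlled. For the latter, Cauchy--Schwarz with the weights $N_k^{-\epsilon}$ suggested by lacunarity works, or one can notice that for each $t$ essentially a single $k$ dominates.
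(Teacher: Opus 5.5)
Your proposal is correct and is essentially the argument the paper intends: apply Lemma \ref{lemma vk} (with $\beta<\frac{b-1}{4}$) term by term and sum over $k$ using the lacunarity of $N_k$, which is the route of \cite[Proposition 3.13]{Coiculescu2025}, to which the paper defers the details. You also correctly prove the bound with $(\log A)^{-1}\log(t_2/t_1)$, which is the form actually used in the proof of Proposition \ref{prop:4.2}; the displayed $(\log A)^{-1}(t_2/t_1)$ is a weaker misprint of it.
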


\section{Construction of the perturbation}\label{sec:4}
 
 $(v^{(1)},b^{(1)})$ and $(v^{(2)},b^{(2)})$ we constructed in the above section are almost solutions of (\ref{e:MHDe}). To show this, we prove that the residual terms are small in a subcritical norm. Consequently, for $i\in\{1,2\}$, we can add a perturbation to $(v^{(i)}, b^{(i)})$ to obtain an exact solution.
\subsection{Estimates on the residual}\label{sec:4.1}
First of all, let us introduce a subcritical norm to measure the size of the residual terms.
Recalling $\alpha \in (0,\frac{1}{8})$ and $\kappa \in (0,\frac{1}{2}-\frac{1}{2\gamma}-2\alpha)$, we consider the Banach space
\begin{align*}
    Y=\{(a,b) \in C^0((0,1];C^{1,\kappa}(\mathbb{T}^5;S^{5 \times 5})) \times C^0((0,1];C^{1,\kappa}(\mathbb{T}^5;A^{5 \times 5}))\},
\end{align*}
with norm
\begin{align*}
    \| (a,b)\|_Y:=\sup_{t \in (0,1]}(t^{1-\alpha}(\| a\|_{L^{\infty}}+\| b\|_{L^{\infty}})+t^{\frac{3}{2}-\alpha}(\| \nabla a\|_{C^k(T^5)}+\| \nabla b\|_{C^k(T^5)})) < \infty.
\end{align*}

For $i\in\{1,2\}$, we define the residual terms $(F^{(i)},G^{(i)})$ in divergence form:
\begin{align*}
    \begin{cases}
        -\mathbb{P}\mathrm{div}F^{(i)}=(\partial_t-\Delta) v^{(i)}+ \mathbb{P}\text{div}( v^{(i)}\otimes  v^{(i)}-b^{(i)}\otimes b^{(i)}),  \\
        -\mathrm{div}G^{(i)}=(\partial_t-\Delta) b^{(i)}+\mathrm{div}(v^{(i)}\otimes  b^{(i)}-b^{(i)}\otimes v^{(i)}).
    \end{cases}
\end{align*}

For the purpose of estimating $(F^{(i)},G^{(i)})$, we decompose $(v_k,b_k)=(v^p_k+v^e_k,b^p_k+b^e_k)$. The leading part $(v^p_k,b^p_k)$ is defined as follows:
\begin{align*}
    v^p_k&\triangleq N_k^2(\psi^0_k+\theta^0_k)\mathrm{exp}(-N_k^2t),\\
    b^p_k&\triangleq N_k^2\phi^0_k\mathrm{exp}(-N_k^2t).
\end{align*}
And the minor part $(v^e_k,b^e_k)$ is given by
\begin{align*}
    v^e_k&\triangleq v^{e,1}_k+v^{e,2}_k+v^{e,3}_k+v^{e,4}_k,\\
    b^e_k&\triangleq b^{e,1}_k+b^{e,2}_k+b^{e,3}_k+b^{e,4}_k,
\end{align*}
where
\begin{align*}
    v^{e,1}_k&\triangleq(\eta_k-\delta)*\mathrm{curl}\mathrm{curl}\left(\sum_{\xi\in\Lambda_U}a_{\xi,k}\Psi^0_{\xi,k}+\sum_{\xi\in\Lambda_B}b_{\xi,k}\Theta^0_{\xi,k}\right)\mathrm{exp}(-N_k^2t),\\
    v^{e,2}_k&\triangleq-N_k^{-2}\mathrm{exp}(-N_k^2t)\times\\
    &\quad\left(\sum_{\xi\in\Lambda_U}\Delta(a_{\xi,k}\varphi_{\xi,k})\mathrm{sin}(N_k(x-x_\xi)\cdot \xi)\xi_1+\sum_{\xi\in\Lambda_B}\Delta(b_{\xi,k}\varphi_{\xi,k})\mathrm{sin}(N_k(x-x_\xi)\cdot \xi)\xi_2\right),\\
    v^{e,3}_k&\triangleq -2N_k^{-1}\mathrm{exp}(-N_k^2t)\times
    \\&\quad\left(\sum_{\xi\in\Lambda_U}\xi\cdot\nabla(a_{\xi,k}\varphi_{\xi,k})\mathrm{cos}(N_k(x-x_\xi)\cdot \xi)\xi_1+\sum_{\xi\in\Lambda_B}\xi\cdot\nabla(b_{\xi,k}\varphi_{\xi,k})\mathrm{cos}(N_k(x-x_\xi)\cdot \xi)\xi_2\right),\\
    v^{e,4}_k&\triangleq N_k^{-2}\mathrm{exp}(-N_k^2t)\times
    \\&\quad\nabla\left(\sum_{\xi\in\Lambda_U}\xi_1\cdot\nabla a_{\xi,k}\varphi_{\xi,k}\mathrm{cos}(N_k(x-x_\xi)\cdot \xi)+\sum_{\xi\in\Lambda_B}\xi_2\cdot\nabla b_{\xi,k}\varphi_{\xi,k}\mathrm{cos}(N_k(x-x_\xi)\cdot \xi)\right),
\end{align*}
and
\begin{flalign*}
    &b^{e,1}_k\triangleq(\eta_k-\delta)*\mathrm{curl}\mathrm{curl}\sum_{\xi\in\Lambda_B}b_{\xi,k}\Phi^0_{\xi,k}\mathrm{exp}(-N_k^2t),&\\
   &b^{e,2}_k\triangleq-N_k^{-2}\sum_{\xi\in\Lambda_B}\Delta(b_{\xi,k}\varphi_{\xi,k})\mathrm{sin}(N_k(x-x_\xi)\cdot \xi)\xi_1\mathrm{exp}(-N_k^2t),&\\
    &b^{e,3}_k\triangleq-2N_k^{-1}\sum_{\xi\in\Lambda_B}\xi\cdot\nabla(b_{\xi,k}\varphi_{\xi,k})\mathrm{cos}(N_k(x-x_\xi)\cdot \xi)\xi_1\mathrm{exp}(-N_k^2t),&\\
    &b^{e,4}_k\triangleq N_k^{-2}\nabla\sum_{\xi\in\Lambda_B}\xi_1\cdot\nabla b_{\xi,k}\varphi_{\xi,k}\mathrm{cos}(N_k(x-x_\xi)\cdot \xi)\mathrm{exp}(-N_k^2t).
\end{flalign*}

The estimates for $(v^p_k,b^p_k)$ is of the same order as those for $(v_k,b_k)$ while the estimates of $(v^e_k,b^e_k)$ is of lower order than those for $(v^p_k,b^p_k)$. 
\begin{lemm}\label{lemmvp}
    For any $k,m\in\mathbb{N}$, we have
    \begin{align}
        \|\nabla^m(v^p_k,b^p_k)\|_{L^{\infty}(\mathbb{T}^5)}\lesssim N_{k}^{1+m}\mathrm{exp}(-N_k^2t),\label{e vpk}\\
        \|\nabla^m(v^e_k,b^e_k)\|_{L^{\infty}(\mathbb{T}^5)}\lesssim M_kN_{k}^{m}\mathrm{exp}(-N_k^2t).\label{e vek}
    \end{align}
\end{lemm}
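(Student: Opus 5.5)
The plan is to estimate each piece directly from the building-block bounds. The inputs are Proposition \ref{prop nablampsi}, in particular (\ref{e nablbma}) for the amplitudes $a_{\xi,k},b_{\xi,k}$; the Mikado bounds (\ref{e varphi}); and the trivial bounds on the oscillatory factors $\sin(N_k(x-x_\xi)\cdot\xi)$ and $\cos(N_k(x-x_\xi)\cdot\xi)$, whose $m$-th derivatives are $O(N_k^m)$. Throughout I use the scale ordering $N_{k-1}\le M_k\le N_k$, with $N_{k+1}^{-\frac13}N_k^{-\frac23}\le N_k^{-1}$ for the mollifier scale. This ordering follows from $b>10$ and $\gamma>1$ once $A$ is large. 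The factor $\exp(-N_k^2t)$ is time-only and passes through every spatial derivative. The case $k=0$, where the amplitudes are constants, is included trivially.

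\emph{Bound (\ref{e vpk}).} Since $v^p_k=N_k^2(\psi^0_k+\theta^0_k)\exp(-N_k^2t)$ and $b^p_k=N_k^2\phi^0_k\exp(-N_k^2t)$, estimate (\ref{e nablampsi}) gives directly
\[
\|\nabla^m(v^p_k,b^p_k)\|_{L^\infty}\lesssim N_k^2N_k^{-1+m}\exp(-N_k^2t)=N_k^{1+m}\exp(-N_k^2t).
\]

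\emph{Bound (\ref{e vek}), terms $v^{e,2},v^{e,3},v^{e,4}$.} These are explicit products, so I expand with the Leibniz rule.
\begin{itemize}
\item Each derivative falling on $a_{\xi,k}$ costs at most $N_{k-1}$.
\item Each derivative falling on $\varphi_{\xi,k}$ costs $M_k$.
\item Each derivative falling on the trigonometric factor costs $N_k$.
\end{itemize}
It follows that $\|\nabla^j(a_{\xi,k}\varphi_{\xi,k})\|_{L^\infty}\lesssim N_k M_k^j$ for every $j$. Hence $v^{e,3}_k$, which carries $N_k^{-1}$ and one derivative on $a\varphi$, is bounded by $N_k^{-1}\cdot N_kM_k\cdot N_k^m=M_kN_k^m$. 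The term $v^{e,2}_k$ is bounded by $N_k^{-2}N_kM_k^2N_k^m\le M_kN_k^m$, using $M_k\le N_k$. The gradient term $v^{e,4}_k$ is bounded by $N_k^{-2}\cdot N_{k-1}N_k\cdot N_k^{m+1}\le M_kN_k^m$. The $b^{e,j}_k$ terms are treated identically.

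\emph{Bound (\ref{e vek}), term $v^{e,1}$.} This is the mollification error, and I expect it to be the only delicate step. Write $f=\mathrm{curl}\,\mathrm{curl}\sum a_{\xi,k}\Psi^0_{\xi,k}$, so that $\nabla^m v^{e,1}_k=(\eta_k-\delta)*\nabla^m f$ up to the exponential factor. The standard estimate is
\[
\|(\eta_k-\delta)*g\|_{L^\infty}\lesssim \ell\|\nabla g\|_{L^\infty},\qquad \ell=N_{k+1}^{-\frac13}N_k^{-\frac23}.
\]
Applying it with $g=\nabla^m f$ gives $\ell N_k^{1+m+1}\exp(-N_k^2t)=(N_k/N_{k+1})^{\frac13}N_k^{1+m}\exp(-N_k^2t)$. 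It therefore suffices to have $(N_k/N_{k+1})^{\frac13}N_k\le M_k$. Since $N_k\approx M_k^\gamma$ and $N_{k+1}\approx M_k^{b\gamma}$, this reduces to
\[
\gamma+\tfrac{\gamma(1-b)}{3}\le1,
\]
which holds because $b>10$ and $\gamma<\frac{1}{4\alpha+b^{-1}}$ is close to $1$ when $\alpha$ is small. For larger $\gamma$ I could instead first rewrite the mollification error through a second-order Taylor expansion, which gains $\ell^2$. I expect the first-order bound to suffice, and otherwise I will use $\ell^2N_k^2\le M_k/N_k$ to close.
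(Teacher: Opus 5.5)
Your proof is correct and follows the paper's argument essentially line by line: the $(v^p_k,b^p_k)$ bound is read off from the potential estimates, the three explicit error terms are handled by the Leibniz rule (giving $M_k^2N_k^{-1+m}$, $M_kN_k^m$ and $N_{k-1}N_k^m$), and the convolution error gets the first-order mollifier bound $N_{k+1}^{-1/3}N_k^{4/3+m}$, exactly as in the paper. One correction to your justification: the condition $(N_k/N_{k+1})^{1/3}N_k\le M_k$ does not depend on $\gamma$ being close to $1$ (it need not be, since $\frac{1}{4\alpha+b^{-1}}\to b$ as $\alpha\to0$); the exponent $\gamma+\frac{\gamma(1-b)}{3}=\frac{\gamma(4-b)}{3}$ is negative for every $\gamma>0$ once $b>4$, so your Taylor-expansion fallback is unnecessary. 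Likewise, $N_{k-1}\le M_k$ requires $\gamma<b$, which follows from the upper bound on $\gamma$ and not from $\gamma>1$.
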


\begin{proof}
    (\ref{e nablampsi}) and the definition of $(v^p_k,b^p_k)$ imply (\ref{e vpk}) immediately. In the following, we focus on the proof of the estimates on $b^e_k$. The proof of the estimates on $v^e_k$ is similar and we omit it.
    
    Applying (\ref{e varphi}), (\ref{e Phi}), (\ref{e nablbma}), the standard mollifier estimate, and the Leibnitz rule, we obtain
    \begin{align*}
        \|\nabla^m b^{e,1}_k\|_{L^{\infty}(\mathbb{T}^5)}&\lesssim N_{k+1}^{-\frac{1}{3}}N_k^{-\frac{2}{3}}\|\nabla^{m+3}\sum_{\xi\in\Lambda_B}b_{\xi,k}\Phi^0_{\xi,k}\|_{L^{\infty}(\mathbb{T}^5)}\mathrm{exp}(-N_k^2t)\\
        &\lesssim N_{k+1}^{-\frac{1}{3}}N_k^{\frac{4}{3}+m}\mathrm{exp}(-N_k^2t),
     \end{align*}
     and 
     \begin{align*}
         \|\nabla^m b^{e,2}_k\|_{L^{\infty}(\mathbb{T}^5)}&\lesssim M_k^2N_k^{-1+m}\mathrm{exp}(-N_k^2t),\\
         \|\nabla^m b^{e,3}_k\|_{L^{\infty}(\mathbb{T}^5)}&\lesssim M_kN_k^{m}\mathrm{exp}(-N_k^2t),\\
         \|\nabla^m b^{e,4}_k\|_{L^{\infty}(\mathbb{T}^5)}&\lesssim N_{k-1}N_k^{m}\mathrm{exp}(-N_k^2t).
     \end{align*}
     Combining the above estimates, we conclude that
     \begin{align*}
         \|\nabla^m b^{e}_k\|_{L^{\infty}(\mathbb{T}^5)}&\lesssim\|\nabla^m b^{e,1}_k\|_{L^{\infty}(\mathbb{T}^5)}+\|\nabla^m b^{e,2}_k\|_{L^{\infty}(\mathbb{T}^5)}+\|\nabla^m b^{e,3}_k\|_{L^{\infty}(\mathbb{T}^5)}+\|\nabla^m b^{e,4}_k\|_{L^{\infty}(\mathbb{T}^5)}\\
         &\lesssim M_kN_{k}^{m}\mathrm{exp}(-N_k^2t).
     \end{align*}
\end{proof}

In the following, we consider the nonlinear part of $(v^p_k,b^p_k)$. Using the fact that $\mathrm{supp}\varphi_{\xi,k}\cap\mathrm{supp}\varphi_{\xi',k}=\emptyset$ for any distinct $\xi,\xi'\in\Lambda_U\cup\Lambda_B$, we proceed to decompose the nonlinear term as follows:
\begin{align*}
    v^p_k\otimes v^p_k-b^p_k\otimes b^p_k&=N_k^4\left(\sum_{\xi\in\Lambda_U}(a_{\xi,k}\Psi^0_{\xi,k})^2+\sum_{\xi\in\Lambda_B}(b_{\xi,k}\Theta^0_{\xi,k})^2-\sum_{\xi\in\Lambda_B}(b_{\xi,k}\Phi^0_{\xi,k})^2\right)\mathrm{exp}(-2N_k^2t)\\
    &=\mathcal{N}_{k,1}+\mathcal{N}_{k,2},\\
    v^p_k\otimes b^p_k-b^p_k\otimes v^p_k&=N_k^4\sum_{\xi\in\Lambda_B}b_{\xi,k}^2\left(\Theta^0_{\xi,k}\otimes\Phi^0_{\xi,k}-\Phi^0_{\xi,k}\otimes\Theta^0_{\xi,k}\right)\mathrm{exp}(-2N_k^2t)\\
    &=\mathcal{M}_{k,1}+\mathcal{M}_{k,2},
\end{align*}
where
\begin{align*}
    \mathcal{N}_{k,1}&\triangleq\left(\sum_{\xi\in\Lambda_U}a_{\xi,k}^2A_{\xi,k}\xi_1\otimes\xi_1+\sum_{\xi\in\Lambda_B}b_{\xi,k}^2B_{\xi,k}\left(\xi_2\otimes\xi_2-\xi_1\otimes\xi_1\right)\right)\mathrm{exp}(-2N_k^2t),\\
    \mathcal{N}_{k,2}&\triangleq\sum_{\xi\in\Lambda_U}a_{\xi,k}^2\left(\varphi_{\xi,k}^2(x)\mathrm{sin}^2(N_k(x-x_\xi)\cdot \xi)-A_{\xi,k}\right)\xi_1\otimes\xi_1\mathrm{exp}(-2N_k^2t)\\
 &\quad+\sum_{\xi\in\Lambda_B}b_{\xi,k}^2\left(\varphi_{\xi,k}^2(x)\mathrm{sin}^2(N_k(x-x_\xi)\cdot \xi)-B_{\xi,k}\right)\left(\xi_2\otimes\xi_2-\xi_1\otimes\xi_1\right)\mathrm{exp}(-2N_k^2t),
\end{align*}
and
\begin{align*}
    \mathcal{M}_{k,1}&\triangleq\sum_{\xi\in\Lambda_B}b_{\xi,k}^2B_{\xi,k}\left(\xi_2\otimes\xi_1-\xi_1\otimes\xi_2\right)\mathrm{exp}(-2N_k^2t),\\
    \mathcal{M}_{k,2}&\triangleq\sum_{\xi\in\Lambda_B}b_{\xi,k}^2\left(\varphi_{\xi,k}^2(x)\mathrm{sin}^2(N_k(x-x_\xi)\cdot \xi)-B_{\xi,k}\right)\left(\xi_2\otimes\xi_1-\xi_1\otimes\xi_2\right)\mathrm{exp}(-2N_k^2t).
\end{align*}
Let us emphasize that $\mathcal{N}_{k,1}$ and $\mathcal{M}_{k,1}$ are principle parts of the nonlinear term, which will be cancelled by the inverse cascade-dominated flows $(\bar{v}_k,\bar{b}_k)$.

We recall two types of antidivergence operators $\mathcal{R}^V$ and $\mathcal{R}^B$ in Proposition \ref{def of antidiv}, and we define
\begin{align*}
    F^{(1)}_h\triangleq-\sum_{k\geq0,even}\mathcal{R}^V\mathrm{div}\mathcal{N}_{k,2},
    \quad G^{(1)}_h\triangleq-\sum_{k\geq0,even}\mathcal{R}^B\mathrm{div}\mathcal{M}_{k,2}.
\end{align*}
and
\begin{align*}
    F^{(2)}_h=-\sum_{k\geq0,odd}\mathcal{R}^V\mathrm{div}\mathcal{N}_{k,2},
    \quad G^{(2)}_h=-\sum_{k\geq0,odd}\mathcal{R}^B\mathrm{div}\mathcal{M}_{k,2}.
\end{align*}

Thus, we have the following estimates.
\begin{lemm}\label{Fh}
    For any $\varepsilon_0>0$ and $i\in\{1,2\}$, let $A$ be sufficiently large, then we have
    \begin{align}
        \|( F^{(i)}_h, G^{(i)}_h)\|_Y\leq\varepsilon_0.
    \end{align}
\end{lemm}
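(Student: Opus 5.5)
The plan is to estimate each summand $\mathcal{R}^V\mathrm{div}\mathcal{N}_{k,2}$ and $\mathcal{R}^B\mathrm{div}\mathcal{M}_{k,2}$ separately in the $Y$ norm and then sum over $k$. The key structural fact is that $\mathcal{N}_{k,2}$ and $\mathcal{M}_{k,2}$ are high-frequency objects. Expand
\[
\varphi_{\xi,k}^2\sin^2(N_k(x-x_\xi)\cdot\xi)-A_{\xi,k} = \bigl(\varphi_{\xi,k}^2-\tfrac{1}{2}\fint\varphi_{\xi,k}^2\bigr)\cdot\tfrac12 - \tfrac12\varphi_{\xi,k}^2\cos(2N_k(x-x_\xi)\cdot\xi)
\]
(with the analogous identity for $B_{\xi,k}$). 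This splits the oscillation into two pieces:
\begin{itemize}
\item a mean-zero piece at frequency $M_k$, coming from the Mikado profile;
\item a piece at frequency $2N_k$, coming from the sine.
\end{itemize}
Since $\varphi_{\xi,k}$ depends only on the directions $\xi,\xi_3,\xi_4$ orthogonal to $\xi_1,\xi_2$, both pieces are constant along $\xi_1,\xi_2$. The tensors $\xi_1\otimes\xi_1$, $\xi_2\otimes\xi_2$ and $\xi_2\otimes\xi_1-\xi_1\otimes\xi_2$ are therefore annihilated by divergence acting on these profiles. It follows that $\mathrm{div}\,\mathcal{N}_{k,2}$ only sees derivatives falling on the slow amplitudes $a_{\xi,k}^2$ and $b_{\xi,k}^2$.

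Using Proposition~\ref{prop nablampsi}, these derivatives cost $N_{k-1}$ each, against amplitude $N_k^2$. This yields
\[
\mathrm{div}\,\mathcal{N}_{k,2}=\sum_\xi \nabla(a_{\xi,k}^2)\cdot(\xi_1\otimes\xi_1)\,\bigl(\text{oscillation at freq.\ } M_k \text{ or } N_k\bigr)\,e^{-2N_k^2t},
\]
with $L^\infty$ size $\lesssim N_{k-1}N_k^2 e^{-2N_k^2t}$.

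Next, apply the antidivergence operators of Proposition~\ref{def of antidiv}. I will use the standard stationary-phase or Bogovskii-type bound $\|\mathcal{R}(f g(\lambda\cdot))\|_{C^{\kappa}}\lesssim \lambda^{-1+\kappa}\|f\|_{C^{m}}$ plus lower-order terms, valid when the fast factor $g$ is mean-zero, with $\lambda\ge M_k$. Because $N_{k-1}\ll M_k$ (recall $M_k=A^{b^k}$, $N_{k-1}\approx M_{k-1}^\gamma$ and $b>10>\gamma$), each derivative on the amplitude gains a factor $N_{k-1}/M_k$. The resulting bounds are
\[
\|\mathcal{R}\,\mathrm{div}\,\mathcal{N}_{k,2}\|_{L^\infty}\lesssim N_{k-1}N_k^2M_k^{-1+\kappa}e^{-2N_k^2t},\qquad \|\nabla\mathcal{R}\,\mathrm{div}\,\mathcal{N}_{k,2}\|_{C^\kappa}\lesssim N_{k-1}N_k^{3+\kappa}M_k^{-1}e^{-2N_k^2t}.
\]
The same bounds hold for $\mathcal{M}_{k,2}$. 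Here the extra derivative costs $N_k$, because it may land on the $N_k$-oscillation.

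Finally, insert the time weights. Using $\sup_t t^{1-\alpha}e^{-2N_k^2t}\lesssim N_k^{-2+2\alpha}$, the $L^\infty$ part contributes $\lesssim N_{k-1}N_k^{2\alpha+\kappa}M_k^{-1}$. Since $N_k\sim M_k^\gamma$, this is $M_k^{\gamma(2\alpha+\kappa)-1}N_{k-1}$. The gradient part behaves the same way, using $t^{3/2-\alpha}$.

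The exponent $\gamma(2\alpha+\kappa)-1$ is negative thanks to $\gamma<1/(4\alpha+b^{-1})$ and the constraint on $\kappa$. This negative power of $M_k$ dominates the factor $N_{k-1}\approx M_k^{\gamma/b}$, because $\gamma(2\alpha+\kappa+b^{-1})<1$. Hence each term is bounded by $M_k^{-c}$ for some $c>0$. This is summable in $k$ and, being at most $A^{-c}$, is $\le\varepsilon_0$ for $A$ large.

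The main obstacle is the antidivergence step. There are two concerns:
\begin{itemize}
\item One must confirm that, after using disjoint supports, the cancellation coming from $\xi_1,\xi_2\perp\nabla\varphi_{\xi,k}$ really removes the $M_k$ and $N_k$ derivatives.
\item One must confirm that the properties of $\mathcal{R}^V,\mathcal{R}^B$ in Proposition~\ref{def of antidiv} give the gain $\lambda^{-1}$ uniformly, including the low-frequency $M_k$ piece. For this piece only $M_k^{-1}$ is available, not $N_k^{-1}$.
\end{itemize}
If that gain is insufficient, I would instead treat the $\varphi^2$ mean-zero part through its period-$M_k^{-1}$ structure. I would also track the mollifier-scale derivatives $N_k^{1/3}N_{k-1}^{2/3}$ from $\chi_k$. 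These are still $\ll M_k$, because $\gamma<b$.
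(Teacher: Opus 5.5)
Your proposal is correct and is essentially the paper's proof. It uses the same two ingredients: the Mikado cancellation, which leaves only $\nabla(a_{\xi,k}^2)$ and $\nabla(b_{\xi,k}^2)$ at cost $N_{k-1}$ by (\ref{e nablbma}), and the stationary-phase bound (\ref{RBestimate}) with gain $M_k^{-1+\beta}$. The paper gets that gain by Fourier-expanding the $M_k^{-1}$-periodic profile rather than splitting it. For the gradient, the paper simply uses that $\nabla\mathcal{R}^B$ is bounded on $C^\kappa$, which gives $N_{k-1}N_k^{2+\kappa}$ with no oscillation gain needed; your cruder bound $N_{k-1}N_k^{3+\kappa}M_k^{-1}$ also suffices after the time weight.

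There are two minor fixes.
\begin{itemize}
\item The split should read $\tfrac12\bigl(\varphi_{\xi,k}^2-\fint\varphi_{\xi,k}^2\bigr)-\tfrac12\bigl(\varphi_{\xi,k}^2\cos(2N_k(x-x_\xi)\cdot\xi)-\fint\varphi_{\xi,k}^2\cos(2N_k(x-x_\xi)\cdot\xi)\bigr)$, so that both pieces are exactly mean-zero. As written, your identity is off by a constant, and a leftover constant multiple of $a_{\xi,k}^2\xi_1\otimes\xi_1$ would get no antidivergence gain.
\item Your closing claim that $N_k^{1/3}N_{k-1}^{2/3}\ll M_k$ ``because $\gamma<b$'' is false in general, since $N_k^{1/3}N_{k-1}^{2/3}\approx M_k^{\gamma(1+2/b)/3}$. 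Your main line, like the paper's, takes the $N_{k-1}$ cost from (\ref{e nablbma}) and never uses this claim.
\end{itemize}
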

\begin{proof}
    As $\mathcal{R}^V$ and $\mathcal{R}^B$ both act as an operator of degree 1, with $\mathcal{N}_{k,2}$ and $\mathcal{M}_{k,2}$ being of similar form, we only need to prove the estimate on  $G^{(1)}_h$ and the other follows similarly.
    For any $\xi\in\Lambda_B$ and $k\geq0$, we denote 
    \begin{align*}
        h_{\xi,k}(x)=\varphi_{\xi}^2(x)\mathrm{sin}^2(\frac{N_k}{M_k}x\cdot  \xi)-B_{\xi,k},
    \end{align*}
    where $\varphi_{\xi}$ is defined in (\ref{defivarphixi}).
    
    Note that $h_{\xi,k}$ is mean-free, we take the fournier form of $h_{\xi,k}$:
    \begin{gather*}
        h_{\xi,k}(x)=\sum_{q\in\mathbb{Z}^d\setminus \{0\}}\hat{h}_{\xi,k}(q)e^{-i qx}
    \end{gather*}
    Thus, by Property 2 in Lemma \ref{lemma divfree}, we can rewrite $G^{(1)}_h$:
    \begin{gather}
        G^{(1)}_h=-\sum_{k\geq0,even}\sum_{\xi\in\Lambda_B}\sum_{q\in\mathbb{Z}^d\setminus \{0\}}\hat{h}_{\xi,k}(q)\mathcal{R}^B\left(\mathrm{div}(b_{\xi,k}^2\left(\xi_2\otimes\xi_1-\xi_1\otimes\xi_2\right))e^{-i qM_k(x-x_\xi)}\right)\mathrm{exp}(-2N_k^2t).\label{canti0}
    \end{gather}
    The function $x\colon\mapsto\mathrm{sin}^2(\frac{N_k}{M_k}x\cdot  \xi)$ has Fouriner support at just $q=0$ and $q=\pm2\frac{N_k}{M_k}\xi$, this implies
    \begin{gather*}
        \hat{h}_{\xi,k}(q)=\frac{1}{2}\mathcal{F}(\varphi^2_\xi)(q)-\frac{1}{4}\mathcal{F}(\varphi^2_\xi)(q+2\frac{N_k}{M_k}\xi)-\frac{1}{4}\mathcal{F}(\varphi^2_\xi)(q-2\frac{N_k}{M_k}\xi),\quad\forall q\in\mathbb{Z}^d\setminus \{0\}.
    \end{gather*}
    Recall that $\varphi^2_\xi$ is smooth. there holds
    \begin{gather}
        |\hat{h}_{\xi,k}(q)|\lesssim\langle q\rangle^{-10}+\langle q+2\frac{N_k}{M_k}\xi\rangle^{-10}+\langle q-2\frac{N_k}{M_k}\xi\rangle^{-10}.\label{canti1}
    \end{gather}
    Appying (\ref{e nablbma}) and (\ref{RBestimate}) with $\beta=\frac{1}{2}-\frac{\gamma}{2}(4\alpha+b^{-1})$, we obtain
    \begin{align}
        &\quad\quad\|\mathcal{R}^B\left(\mathrm{div}\left(b_{\xi,k}^2(\xi_2\otimes\xi_1-\xi_1\otimes\xi_2)\right)e^{-i qM_k(x-x_\xi)}\right)\|_{C^\beta(\mathbb{T}^5)}\nonumber\\
        &\lesssim_mN_{k-1}N_k^2(M_k^{-1+\beta}+M_k^{-m+\beta}N_{k-1}^{m}+M_k^{-m}N_{k-1}^{m+\beta})\nonumber\\&\lesssim_m N_{k-1}N_k^2M_k^{-1+\beta},\label{canti2}
    \end{align}
    where we choose $m>b/(b-\gamma)$ so that $(\frac{N_{k-1}}{M_k})^m\leq M_k^{-1}$.
    Combining (\ref{canti0}), (\ref{canti1}) and (\ref{canti2}), it holds
    \begin{align*}
        \|G^{(1)}_h(t)\|_{L^\infty(\mathbb{T}^5)}&\lesssim\sum_{k\geq0,even}N_{k-1}N_k^2M_k^{-1+\beta}\mathrm{exp}(-2N_k^2t)\nonumber\\&\quad\quad\times\sum_{q\in\mathbb{Z}^d\setminus \{0\}}\left(\langle q\rangle^{-10}+\langle q+2\frac{N_k}{M_k}\xi\rangle^{-10}+\langle q-2\frac{N_k}{M_k}\xi\rangle^{-10}\right)\nonumber\\
        &\lesssim\sup_k(N_{k-1}M_k^{-1+\beta})^{\frac{1}{2}}\sum_{k\geq0,even}(N_{k-1}M_k^{-1+\beta})^{\frac{1}{2}}N_k^2\mathrm{exp}(-2N_k^2t).
    \end{align*}
    By definition of $\beta$, we obtain
    \begin{gather*}
        (N_{k-1}M_k^{-1+\beta})^{\frac{1}{2}}\leq N_k^{\frac{1}{2b}+\frac{-1+\beta}{2\gamma}}\leq N_k^{-2\alpha},
    \end{gather*}
    which immediately implies
    \begin{gather}
        \|G^{(1)}_h(t)\|_{L^\infty(\mathbb{T}^5)}\leq \varepsilon_0t^{-1+\alpha}\label{nablag1h1}
    \end{gather}
    if $A$ is large enough.
    
    Using the fact that $\nabla\mathcal{R}^B$ is bounded on $C^\kappa(\mathbb{T}^5)$ and (\ref{e nablbma}), we have
    \begin{align}
        \|\nabla G^{(1)}_h\|_{C^\kappa(\mathbb{T}^5)}&\lesssim\sum_{k\geq0,even}\sum_{\xi\in\Lambda_B}\|\nabla(b_{\xi,k}^2)(\varphi_{\xi,k}^2(x)\mathrm{sin}^2(N_k(x-x_\xi)\cdot  \xi)-B_{\xi,k})\|_{C^\kappa(\mathbb{T}^5)}\mathrm{exp}(-2N_k^2t)\nonumber\\
        &\lesssim \sum_{k\geq0,even}N_{k-1}N_k^{2+\kappa}\mathrm{exp}(-2N_k^2t)\nonumber\\
        &\lesssim \sup_k(N_{k-1}N_k^{-1+\kappa})^\frac{1}{2}\sum_{k\geq0,even}(N_{k-1}N_k^{-1+\kappa})^\frac{1}{2}N_k^{3}\mathrm{exp}(-2N_k^2t).
    \end{align}
    By definition of $\kappa$, we have
    \begin{gather*}
        (N_{k-1}N_k^{-1+\kappa})^\frac{1}{2}\leq N_k^{\frac{1}{2}(b^{-1}-1+\kappa)}\leq N_k^{-2\alpha}.
    \end{gather*}
    Thus, we obtain
    \begin{gather}
        \|\nabla G^{(1)}_h(t)\|_{C^\kappa(\mathbb{T}^5)}\leq \varepsilon_0t^{-\frac{3}{2}+\alpha}\label{nablag1h2}
    \end{gather}
    provided that $A$ is sufficiently large.
    
    Combing (\ref{nablag1h1}) and (\ref{nablag1h2}) together and taking $A$ sufficiently large, we conclude that
    \begin{gather*}
        t^{1-\alpha}\|G^{(1)}_h\|_{L^\infty(\mathbb{T}^5)}+t^{\frac{3}{2}-\alpha}\|\nabla G^{(1)}_h\|_{C^\kappa(\mathbb{T}^5)}\leq\varepsilon_0.
    \end{gather*}
\end{proof}

Recall Definition \ref{defi of psiok}, let us define the tensor fields
\begin{align*}
    S_k&\triangleq\mathcal{D}(\psi_k^0+\theta_k^0)\mathrm{exp}(-N_k^2t),\\
    \bar{S}_k&\triangleq\frac{1}{2}N_{k+1}^{-2}\left(\sum_{\xi\in\Lambda_U}A_{\xi,k+1}a_{\xi,k+1}^2\xi_1\otimes\xi_1+\sum_{\xi\in\Lambda_B}B_{\xi,k+1}b_{\xi,k+1}^2(\xi_2\otimes\xi_2-\xi_1\otimes\xi_1)\right)\mathrm{exp}(-2N_{k+1}^2t),\\
    T_k&\triangleq\widetilde{\mathcal{D}}\phi_k^0\mathrm{exp}(-N_k^2t),\\
    \bar{T}_k&\triangleq\frac{1}{2}N_{k+1}^{-2}\sum_{\xi\in\Lambda_B}B_{\xi,k+1}b_{\xi,k+1}^2(\xi_2\otimes\xi_1-\xi_1\otimes\xi_2)\mathrm{exp}(-2N_{k+1}^2t).
\end{align*}
Due to Definition \ref{defi of v}, it is easy to see
\begin{align*}
\mathbb{P}\mathrm{div}S_k=v_k,\quad\mathbb{P}\mathrm{div}\bar{S}_k=\bar{v}_k,\quad\mathrm{div}\,T_k=b_k\quad\text{and}\quad\mathrm{div}\,\bar{T}_k=\bar{b}_k.
\end{align*}
Moreover, by the same computation of Remark \ref{intialequal}, we have 
\begin{align}
    \partial_t\bar{S}_k=-\mathcal{N}_{k+1,1}\quad\text{and}\quad\partial_t\bar{T}_k=-\mathcal{M}_{k+1,1}.\label{dixiao}
\end{align}

Finally, we decompose $(F^{(1)},G^{(1)})$ as follows:
\begin{align*}
    F^{(1)}= F^{(1)}_1+F^{(1)}_2+F^{(1)}_3+F^{(1)}_4+F^{(1)}_h,\\
    G^{(1)}= G^{(1)}_1+G^{(1)}_2+G^{(1)}_3+G^{(1)}_4+G^{(1)}_h,\\
\end{align*}
where
\begin{align*}
    F^{(1)}_1&\triangleq-\sum_{k\geq0,even}(\partial_t-\Delta)S_k,\\
    F^{(1)}_2&\triangleq\sum_{k\geq0,odd}\Delta\bar{S}_k,\\
    F^{(1)}_3&\triangleq-\sum_{k\geq0,odd}(\partial_t\bar{S}_k+\mathcal{N}_{k+1,1})-\mathcal{R}^V\mathrm{div}\mathcal{N}_{0,1},\\
    F^{(1)}_4&\triangleq-v^{(1)}\otimes v^{(1)}+b^{(1)}\otimes b^{(1)}+\sum_{k\geq0,even}(v^{p}_k\otimes v^{p}_{k}-b^{p}_k\otimes b^{p}_{k}),
\end{align*}
and
\begin{align*}
    G^{(1)}_1&\triangleq-\sum_{k\geq0,even}(\partial_t-\Delta)T_k,\\
    G^{(1)}_2&\triangleq\sum_{k\geq0,odd}\Delta\bar{T}_k,\\
    G^{(1)}_3&\triangleq-\sum_{k\geq0,odd}(\partial_t\bar{T}_k+\mathcal{M}_{k+1,1})-\mathcal{R}^B\mathrm{div}\mathcal{M}_{0,1},\\
    G^{(1)}_4&\triangleq-v^{(1)}\otimes b^{(1)}+b^{(1)}\otimes v^{(1)}+\sum_{k\geq0,even}(v^{p}_k\otimes b^{p}_{k}-b^{p}_k\otimes v^{p}_{k}).
\end{align*}
Based on the definition of $(v^{(2)},b^{(2)})$, we can construct $(F^{(2)},G^{(2)})$ analogously. Here we omit the details. In the following, we prove $\{(F^{(i)},G^{(i)})\}_{i\in\{\{1,2\}}$ are arbitrary small in the subcritical space $Y$.

\begin{prop}\label{Prop 4.1}
    For any $\varepsilon_0>0$ and $i\in\{1,2\}$, we have
    \begin{gather}
        \|(F^{(i)},G^{(i)})\|_{Y}\leq \varepsilon_0,
    \end{gather}
    if $A$ is sufficiently large.
\end{prop}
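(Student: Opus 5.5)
The plan is to treat only $i=1$; the case $i=2$ is symmetric after exchanging even and odd indices. By the decomposition just above, $F^{(1)}=\sum_{j=1}^4F^{(1)}_j+F^{(1)}_h$, and similarly for $G^{(1)}$. Lemma \ref{Fh} already gives $\|(F^{(1)}_h,G^{(1)}_h)\|_Y\le\varepsilon_0$. So it suffices to show that each pair $(F^{(1)}_j,G^{(1)}_j)$, $j=1,\dots,4$, has $Y$-norm at most $\varepsilon_0/5$ once $A$ is large. In each case the target bound is
\begin{gather*}
\|X(t)\|_{L^\infty}\lesssim \varepsilon\, t^{-1+\alpha},\qquad \|\nabla X(t)\|_{C^\kappa}\lesssim \varepsilon\, t^{-\frac32+\alpha}.
\end{gather*}
I will use one mechanism throughout. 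A bound of the form $\sum_k N_k^{-\delta}N_k^{2}e^{-cN_k^2t}$ is $\lesssim (\sup_k N_k^{-\delta/2})\,t^{-1+\alpha}$ provided $\delta/2\ge 2\alpha$. This holds because $\sum_k N_k^{2-2\alpha}e^{-cN_k^2t}\lesssim t^{-1+\alpha}$, thanks to the superexponential growth of $N_k$. The same trick works with $N_k^3$ and $t^{-3/2+\alpha}$.

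\textbf{Linear terms $j=1,2$.} For $F^{(1)}_1$, compute $(\partial_t-\Delta)S_k=-\mathcal{D}\big((N_k^2+\Delta)(\psi^0_k+\theta^0_k)\big)e^{-N_k^2t}$. The leading symbol $N_k^2$ cancels $-\Delta$ acting on $\sin(N_k(x-x_\xi)\cdot\xi)$. What survives are terms where derivatives fall on $a_{\xi,k},b_{\xi,k},\varphi_{\xi,k}$, or on the mollification error $\eta_k-\delta$. These are exactly the structures in $v^{e,i}_k$, $b^{e,i}_k$. Repeating the proof of Lemma \ref{lemmvp} gives $\|\nabla^m(\partial_t-\Delta)S_k\|_{L^\infty}\lesssim M_kN_k^{m}e^{-N_k^2t}$, and likewise for $T_k$. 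Since $M_k\le N_k^{1/\gamma}$ and $1-1/\gamma>4\alpha+\kappa$ by the constraints on $\gamma$ and $\kappa$, the factor $N_k^{-(1-1/\gamma)}$ gives the required gain. For $F^{(1)}_2$, Proposition \ref{prop nablampsi} gives $\|\nabla^m\bar S_k\|_{L^\infty}\lesssim N_k^{2+m}$, so $\|\Delta \bar S_k\|\lesssim N_k^{2}e^{-2N_{k+1}^2t}$. Compared with $N_{k+1}^{2}$, this gains $(N_k/N_{k+1})^{2}$, which is enormous.

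\textbf{Cancellation term $j=3$.} By (\ref{dixiao}), the sum over odd $k$ vanishes identically. Only $\mathcal{R}^V\mathrm{div}\mathcal{N}_{0,1}$ and $\mathcal{R}^B\mathrm{div}\mathcal{M}_{0,1}$ remain, and these are smooth, fixed and bounded by $e^{-2N_0^2t}$. I need their $Y$-norm small. Taking $A$ large makes $N_0$ large, so $\sup_t t^{1-\alpha}N_0^{c}e^{-2N_0^2t}\lesssim N_0^{c-2+2\alpha}$. The needed fact is that $\mathcal{N}_{0,1}$ has a size of order $1$ (from Proposition \ref{prop of Dpsi} with $N_0$ normalization) times low frequencies. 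I would double-check the normalization of $a_{\xi,0}$ here. If there is no gain, I would instead absorb this term by noting that $k=0$ contributes only a low-frequency smooth piece.

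\textbf{Nonlinear interaction term $j=4$ (main obstacle).} Write $v^{(1)}$ and $b^{(1)}$ as sums of $v_k,b_k$ ($k$ even) and $\bar v_k,\bar b_k$ ($k$ odd). Expanding $F^{(1)}_4$ gives three kinds of terms.
\begin{itemize}
\item[(a)] Self-interactions $v_k\otimes v_k-v^p_k\otimes v^p_k$, which involve $v^e_k$. These are bounded by $M_kN_ke^{-2N_k^2t}$ and gain $M_k/N_k$.
\item[(b)] Cross terms between different $k\ne k'$. Their supports and time scales separate: the product is bounded by $N_kN_{k'}e^{-N_{\max}^2t}$ with $k<k'$, and relative to $N_{k'}^2$ this gains $N_k/N_{k'}$. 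Terms involving $\bar v_k$ are similar, using (\ref{e barvk}) with small $\beta$ and the slower decay $e^{-N_{k+1}^2t}$. The relevant pairs are $\bar v_k\otimes v_{k+1}$, where both have the same time scale $N_{k+1}$ but amplitudes $N_k^{1+\beta}N_{k+1}$, gaining $N_k^{1+\beta}/N_{k+1}$.
\item[(c)] Interactions of $v_k$ with $b_k$ coming from the support disjointness built into $\chi^V,\chi^B$. These vanish except through the already-treated structure.
\end{itemize}
Since the series is infinite, I would organize it as $\sum_{k\le k'}$ and dominate it by $\sup_{k'}$ of a geometric sum, using the $\log$-type bound (\ref{2 estimate of v}) if needed. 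The gradient estimates follow by Leibniz with one extra factor $N_{k'}^{1+\kappa}$. I expect the delicate point to be the borderline pair $\bar v_k$ versus $v_{k+1}$ and the $C^\beta$ loss from $\mathbb{P}$. This is where $\beta<\frac{b-1}{4}$ and the constraints on $\gamma$ are needed.
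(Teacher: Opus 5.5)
\textbf{Gap in the term $j=3$.} Your decomposition and your estimates for $j=1,2,4$ follow the paper's proof essentially line by line, and those parts are fine. That means the gain $M_k/N_k$ for the linear error, $(N_k/N_{k+1})^2$ for $\Delta\bar S_k$, and $N_{k-1}/N_k$, $M_k/N_k$, $N_k^{1+\beta}/N_{k+1}$ for the interaction terms, followed by summation via lacunarity.

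After (\ref{dixiao}) removes the odd-$k$ sum, what remains is $\mathcal{R}^V\mathrm{div}\,\mathcal{N}_{0,1}$ and $\mathcal{R}^B\mathrm{div}\,\mathcal{M}_{0,1}$. No size estimate can make these small.
\begin{itemize}
\item By Definition \ref{defi of psiok}, $a_{\bar\xi,0}=N_0$, so $\mathcal{N}_{0,1}$ has size $N_0^2e^{-2N_0^2t}$, not $O(1)$.
\item At $t\sim N_0^{-2}$ this gives $t^{1-\alpha}N_0^2e^{-2N_0^2t}\sim N_0^{2\alpha}$, which grows with $A$.
\item Your fallback, treating it as a harmless low-frequency piece, gives no smallness in $Y$ either. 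Nothing in $v^{(1)}$ cancels the $k=0$ self-interaction, so this term has to vanish for a structural reason.
\end{itemize}
The missing observation is that at $k=0$ the amplitudes $a_{\xi,0},b_{\xi,0}$ are constants ($N_0$ or $0$). Hence $\mathcal{N}_{0,1}$ and $\mathcal{M}_{0,1}$ are constant in $x$, so $\mathrm{div}\,\mathcal{N}_{0,1}=\mathrm{div}\,\mathcal{M}_{0,1}=0$ and $F^{(1)}_3=G^{(1)}_3=0$ identically. For $i=2$ there is no leftover at all: the even-indexed $\bar S_k,\bar T_k$ cancel $\mathcal{N}_{k+1,1},\mathcal{M}_{k+1,1}$ for every odd $k+1$. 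So the two cases are not literally symmetric at this step.

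\textbf{Two smaller slips, neither fatal.}
\begin{itemize}
\item $(\partial_t-\Delta)S_k=\mathcal{D}\big((\partial_t-\Delta)(\psi_k+\theta_k)\big)$ carries one more derivative than $v^e_k$. Its bound is therefore $M_kN_k^{1+m}e^{-N_k^2t}$, not $M_kN_k^{m}e^{-N_k^2t}$. The gain $N_k^{-(1-1/\gamma)}$ you then invoke is the correct one for this count, so your conclusion stands.
\item Proposition \ref{prop nablampsi} gives $\|\nabla^m\bar S_k\|_{L^\infty}\lesssim N_k^{m}$, not $N_k^{2+m}$. This is what actually yields your (correct) bound $\|\Delta\bar S_k\|_{L^\infty}\lesssim N_k^2e^{-2N_{k+1}^2t}$.
\end{itemize}
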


\begin{proof}
    For $i\in\{1,2\}$, we decompose the pair $(F^{(i)},G^{(i)})$ into structures of a similar form, thus we only prove the esimates on $F^{(1)}$.

   Firstly, we establish the estimate for $F^{(1)}_1$. Observing that $\mathrm{sin}(N_k(x-x_\xi)\cdot  \xi)\mathrm{exp}(-N_{k}^2t)$ is a solution of the heat equation, we obtain
    \begin{align*}
        &(\partial_t-\Delta)S_k\\
        =&N_k^{-2}\eta_k*\mathcal{D}\left(\sum_{\xi\in\Lambda_U}\Delta(a_{\xi,k}\varphi_{\xi,k})\mathrm{sin}(N_k(x-x_\xi)\cdot  \xi)\xi_1+2N_k\xi\cdot\nabla(a_{\xi,k}\varphi_{\xi,k})\mathrm{cos}(N_k(x-x_\xi)\cdot  \xi)\xi_1\right.\\
      &+\left.\sum_{\xi\in\Lambda_B}\Delta(b_{\xi,k}\varphi_{\xi,k})\mathrm{sin}(N_k(x-x_\xi)\cdot  \xi)\xi_2+2N_k\xi\cdot\nabla(b_{\xi,k}\varphi_{\xi,k})\mathrm{cos}(N_k(x-x_\xi)\cdot  \xi)\xi_2\right)\mathrm{exp}(-N_{k}^2t).
     \end{align*}
     Thus, using (\ref{e varphi}) and (\ref{e nablbma}), we deduce
     \begin{align*}
         \|\nabla^mF^{(1)}_1\|_{L^\infty(\mathbb{T}^5)}&\lesssim \sum_{k\geq0,even} M_kN_{k}^{1+m}\mathrm{exp}(-N_{k}^2t). 
     \end{align*}
     Hence, we have
     \begin{align}
         t\|F^{(1)}_1\|_{L^\infty(\mathbb{T}^5)}+t^{\frac{3}{2}}\|\nabla F^{(1)}_1\|_{L^\infty(\mathbb{T}^5)}&\lesssim \sup_k(\frac{M_kN_k^\kappa}{N_k})^{\frac{1}{2}}t^{-\frac{1}{2}(\frac{1}{\gamma}+\kappa-1)}\leq\frac{\varepsilon_0}{5}t^\alpha\label{F1}
     \end{align}
     if $A$ is chosen sufficiently large.
    
    Next, consider $F^{(1)}_2$. Once again, by (\ref{e nablbma}) and the definition of $F^{(1)}_2$, we get
    \begin{align*}
        \|\nabla^mF^{(1)}_2\|_{L^\infty(\mathbb{T}^5)}\lesssim \sum_{k\geq0,odd}N_{k}^{2+m}\mathrm{exp}(-N_{k+1}^2t).
    \end{align*}
    Similarly with the estimes of $F^{(1)}_1$, we obtain
    \begin{align}
         t\|F^{(1)}_2\|_{L^\infty(\mathbb{T}^5)}+t^{\frac{3}{2}}\|\nabla F^{(1)}_2\|_{L^\infty(\mathbb{T}^5)}\leq\frac{\varepsilon_0}{5}t^\alpha\label{F2}
     \end{align}
     provided that $A$ is chosen sufficiently large.

    For $F^{(1)}_3$, we claim that $F^{(1)}_3$ is actually 0. In fact, due to Definition \ref{definition of phi}, we know $\{a_{\xi,0}\}_{\Lambda_U}$ and $\{b_{\xi,0}\}_{\Lambda_B}$ are $N_0$ or 0. Thus, we have $\mathrm{div}\mathcal{N}_{0,1}=0$. Combing (\ref{dixiao}) together, we know $F^{(1)}_3=0$.

    Finally, we focus on $F^{(1)}_4$. We decompose $v^{(1)}$ and $b^{(1)}$ into two parts:
    \begin{align*}
        v^{(1)}=v^{p}+\widetilde{v}\quad\mathrm{and}\quad b^{(1)}=b^{p}+\widetilde{b},
    \end{align*}
    where 
    \begin{gather*}
        v^{p}=\sum_{k\geq0,even}v^p_k,\quad b^{p}=\sum_{k\geq0,even}b^p_k,\\
        \widetilde{v}=\sum_{k\geq0,even}v^e_k+\sum_{odd}\bar{v}_k,\quad\widetilde{b}=\sum_{k\geq0,even}b^e_k+\sum_{odd}\bar{b}_k.
    \end{gather*}
    Thus, we can spilt
    \begin{align*}
        F^{(1)}_4&=\left(-v^{p}\otimes\widetilde{v}-\widetilde{v}\otimes v^{(1)}+b^{p}\otimes\widetilde{b}+\widetilde{b}\otimes b^{(1)}\right)+\left(-\sum_{k\neq j,even}v^{p}_k\otimes v^{p}_j+\sum_{k\neq j,even}b^{p}_k\otimes b^{p}_j\right)\\
        &\triangleq F^{(1)}_{4,1}+F^{(1)}_{4,2}.
    \end{align*}
    From (\ref{e vk}), (\ref{e barvk}), (\ref{e vpk}) and (\ref{e vek}), it yields that
    \begin{gather*}
        \|\nabla^mF^{(1)}_{4,1}\|_{L^\infty(\mathbb{T}^5)}\lesssim\sum_{k}N_k^{1+m}\mathrm{exp}(-N_k^2t)\times\left(\sum_{k}N_k^{2}\mathrm{exp}(-N_{k+1}^2t)+\sum_{k}M_k\mathrm{exp}(-N_k^2t)\right),
    \end{gather*}
    Here we apply (\ref{e barvk}) with choosing $\beta=1$. By the same reasoning as before, this implies
    \begin{align}
         t\|F^{(1)}_{4,1}\|_{L^\infty(\mathbb{T}^5)}+t^{\frac{3}{2}}\|\nabla F^{(1)}_{4,1}\|_{L^\infty(\mathbb{T}^5)}\leq\frac{\varepsilon_0}{5}t^\alpha.\label{F41}
     \end{align}
    
    Consider $F^{(1)}_{4,2}$, we deduce 
    \begin{align*}
        \|\nabla^mF^{(1)}_{4,2}\|_{L^\infty(\mathbb{T}^5)}&\lesssim \sum_{k\neq j,even}(N_k+N_j)^mN_kN_j\mathrm{exp}(-(N_{k}^2+N_{j}^2)t)\nonumber\\
        &\lesssim\sum_{k}N_k^{1+m}\mathrm{exp}(-N_k^2t)\sum_{j<k}N_j\nonumber\\
        &\lesssim\sum_{k}N_k^{1+m}N_{k-1}\mathrm{exp}(-N_k^2t),
    \end{align*}
    which gives 
    \begin{align}
         t\|F^{(1)}_{4,2}\|_{L^\infty(\mathbb{T}^5)}+t^{\frac{3}{2}}\|\nabla F^{(1)}_{4,2}\|_{L^\infty(\mathbb{T}^5)}\leq\frac{\varepsilon_0}{5}t^\alpha.\label{F42}
     \end{align}
    
    Combining Lemma \ref{Fh}, \eqref{F1}, \eqref{F2}, \eqref{F41} and \eqref{F42}, we conclude that
\begin{gather*}
    t^{1-\alpha}\|F^{(1)}(t)\|_{L^\infty(\mathbb{T}^5)}+t^{\frac{3}{2}-\alpha}\|\nabla F^{(1)}(t)\|_{C^\kappa(\mathbb{T}^5)}\leq\varepsilon_0,\quad\forall t\in(0,1]
\end{gather*}
which finishs the proof.
\end{proof}

\subsection{Semigroup of the linearization around the principal part}\label{sec:4.2}

For $i \in \{1,2\}$, $(f,g) \in Y$, and $0<t'\le t \le 1$, we define the semigroup $P^{(i)}(t,t')f$ and $Q^{(i)}(t,t')g$ as the solution of
\[
\begin{cases}
    \begin{aligned}
        (\partial_{t}-\Delta) P^{(i)}(t,t')f + \mathbb{P}\text{div}\Bigl(
        &P^{(i)}(t,t')f \otimes v^{(i)}(t) + v^{(i)}(t) \otimes P^{(i)}(t,t')f \\
        &- b^{(i)}(t)\otimes Q^{(i)}(t,t')g - Q^{(i)}(t,t')g\otimes b^{(i)}(t)\Bigr) = 0,
    \end{aligned} \\
    P^{(i)}(t',t')f = \mathbb{P}\text{div}\,f(t'), \\
    \begin{aligned}
        (\partial_{t}-\Delta) Q^{(i)}(t,t')g + \text{div}\Bigl(
        &v^{(i)}(t)\otimes Q^{(i)}(t,t')g - Q^{(i)}(t,t')g\otimes v^{(i)}(t) \\
        &+ P^{(i)}(t,t')f\otimes b^{(i)}(t) - b^{(i)}(t)\otimes P^{(i)}(t,t')f\Bigr) = 0,
    \end{aligned} \\
    Q^{(i)}(t',t')g = \text{div}\,g(t').
\end{cases}
\]
Because $v^{(i)}$ and $b^{(i)}$ are smooth with uniform estimates on $[t',1] \times \mathbb{T}^{d}$, we then have $(P^{(i)},Q^{(i)})(t,t')$ is well-defined and smooth. For convenience, we give a proof of existence on this perturbed MHD systems in Proposition \ref{lemmaExistence}. The following proposition shows that $(P^{(i)},Q^{(i)})(t,t')$ behaves comparably to $(e^{(t-t')\Delta}\mathbb{P}\mathrm{div},e^{(t-t')\Delta}\mathrm{div})$ up to a small loss of the form $\left(\frac{t}{t'}\right)^{\epsilon}$, where $\epsilon > 0$ can be chosen arbitrarily small. We note that this loss arises because $(v^{(i)},b^{(i)})$ does not belong to $(L^2(\mathbb{R}_{+};L^{\infty}))^2$ or $(L^1(\mathbb{R}_{+},t^{-\frac{1}{2}}dt;L^{\infty}))^2$.
\begin{prop}\label{prop:4.2}
    For all $i \in {1,2}$, $(a,b) \in Y$, and $0 < t' \le t \le 1$, we have
    \begin{align*}
        &\quad\| (P^{(i)}(t,t')f,Q^{(i)}(t,t')g)\|_{L^{\infty}(\mathbb{T}^5)}+(t-t')^{\frac{1}{2}}t^{\frac{\kappa}{2}}\| \nabla(P^{(i)}(t,t')f,Q^{(i)}(t,t')g)\|_{C^{\kappa}(\mathbb{T}^5)}\\& \lesssim (t')^{-1+\alpha-\epsilon}t^{-\frac{1}{2}+\epsilon}\|(f,g)\|_Y.
    \end{align*}
\end{prop}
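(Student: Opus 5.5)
The plan is to adapt the argument of \cite[Proposition 4.2]{Coiculescu2025}. We treat the pair $(P,Q)\triangleq(P^{(i)}(t,t')f,Q^{(i)}(t,t')g)$ through its Duhamel formulation starting at $t'$. We close a Gronwall-type estimate using the two integrability bounds in Proposition \ref{prop of v}: the pointwise bound (\ref{1 estimate1 of v}) and the logarithmic bound (\ref{2 estimate of v}).

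\textbf{Duhamel form and initial term.} Write
\begin{align*}
P(t)&=e^{(t-t')\Delta}\mathbb{P}\mathrm{div}f(t')-\int_{t'}^t e^{(t-s)\Delta}\mathbb{P}\mathrm{div}\bigl(P\otimes v+v\otimes P-b\otimes Q-Q\otimes b\bigr)(s)\,ds,\\
Q(t)&=e^{(t-t')\Delta}\mathrm{div}g(t')-\int_{t'}^t e^{(t-s)\Delta}\mathrm{div}\bigl(v\otimes Q-Q\otimes v+P\otimes b-b\otimes P\bigr)(s)\,ds.
\end{align*}
For the free term, the $Y$-norm gives $\|f(t')\|_{L^\infty}\le (t')^{-1+\alpha}\|(f,g)\|_Y$ and $\|\nabla f(t')\|_{C^\kappa}\le (t')^{-\frac32+\alpha}\|(f,g)\|_Y$. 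We interpolate: for $t-t'\ge t'$ we use $\|e^{\tau\Delta}\mathbb{P}\mathrm{div}\|_{C^\kappa\to L^\infty}\lesssim\tau^{-\frac12+\frac\kappa2}$ on $f$; for $t-t'\le t'$ we put the derivative on $f$. Either way the free term is bounded by $(t')^{-1+\alpha}t^{-\frac12}$, which is better than required. The gradient part with weight $(t-t')^{\frac12}t^{\frac\kappa2}$ is handled the same way, using heat-kernel smoothing $\tau^{-\frac12-\frac\kappa2}$.

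\textbf{Gronwall step.} Set $X(s)\triangleq\|(P,Q)(s)\|_{L^\infty}$. The heat-kernel bound $\|e^{(t-s)\Delta}\mathbb{P}\mathrm{div}\|_{C^{\kappa'}\to L^\infty}\lesssim (t-s)^{-\frac12+\frac{\kappa'}2}$ requires a small Hölder norm of the products, since $\mathbb{P}$ is not $L^\infty$ bounded. We handle this as in Lemma \ref{lemma vk}, carrying the $C^\kappa$ seminorm of $(P,Q)$ together with that of $(v,b)$. This gives
$$X(t)\lesssim (t')^{-1+\alpha}t^{-\frac12}\|(f,g)\|_Y+\int_{t'}^t (t-s)^{-\frac12}\|(v,b)(s)\|_{L^\infty}X(s)\,ds,$$
up to the Hölder corrections. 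We split $[t',t]$ into $[t',t/2]$ and $[t/2,t]$. On $[t/2,t]$, (\ref{1 estimate1 of v}) gives $\|(v,b)\|_{L^\infty}\lesssim s^{-\frac12}$, so the singular kernel integrates to $O(1)$. That piece is not small, so we cannot absorb it directly. Instead we propagate $X$ on dyadic time blocks $[2^jt',2^{j+1}t']$ using the local smallness from (\ref{2 estimate of v}): on a block $[t_1,t_2]$, $\int_{t_1}^{t_2}\|(v,b)\|_{L^\infty}s^{-\frac12}\,ds\lesssim 1+(\log A)^{-1}\log(t_2/t_1)$.

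Applying the weighted Gronwall lemma for the quantity $\sup_s s^{\frac12-\epsilon}(t')^{1-\alpha+\epsilon}X(s)$ yields a growth factor $\exp\bigl(C(\log A)^{-1}\log(t/t')\bigr)=(t/t')^{C/\log A}$. We then choose $A$ large so that $C/\log A\le\epsilon$. The bounded "$1+$" part is absorbed by splitting into finitely many subintervals on which the $L^2_tL^\infty$ norm is small; this is where the $L^2([t_1,t_2];L^\infty)$ component of (\ref{2 estimate of v}) enters. The gradient bound then follows by inserting the $L^\infty$ bound back into Duhamel, putting one derivative on the heat kernel and using (\ref{1 estimate1 of v}) with $m=1$ for $\nabla(v,b)$.

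\textbf{Main obstacle.} The main obstacle is this Gronwall closure. The coefficient $\|(v,b)(s)\|_{L^\infty}\sim s^{-\frac12}$ is exactly critical, so no smallness is available uniformly. The loss must come only through the logarithmic term in (\ref{2 estimate of v}), and that loss must be converted into the power $(t/t')^\epsilon$ while controlling the $C^\kappa$ corrections required by $\mathbb{P}$.
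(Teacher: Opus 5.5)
\textbf{The gap: the Gronwall closure.} Your outline matches the paper's: Duhamel from $t'$, the split at $t'\vee\frac t2$, the weight $t^{1/2}$, and a Gronwall argument fed by \eqref{1 estimate1 of v} and \eqref{2 estimate of v} with $A$ large. The gap is in the step you single out as the main obstacle. The mechanism you describe does not close it.
\begin{itemize}
\item First, \eqref{2 estimate of v} gives no local smallness: on every dyadic block it gives $O(1)$. Propagating block by block therefore multiplies about $\log_2(t/t')$ factors $C_0>1$. The result is a fixed power $(t/t')^{\log_2 C_0}$, not $(t/t')^{\epsilon}$ with $\epsilon$ arbitrary.
\item Second, splitting further into subintervals where $\int\|(v,b)\|_{L^\infty}^2\,ds$ is small does not help by itself. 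Cauchy--Schwarz against the kernel $(t-s)^{-1/2}$ diverges logarithmically, so $L^2_t$-smallness does not control $\int(t-s)^{-1/2}\|(v,b)(s)\|_{L^\infty}X(s)\,ds$.
\item Third, the ``$1+$'' never needs absorbing. It should enter once, through a single Gronwall over all of $[t',t]$, where it only costs $e^{O(1)}$.
\end{itemize}

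\textbf{The missing tool.} What you need is the fractional Gronwall inequality, Lemma~\ref{B.3}. Set $h(t)=t^{1/2}\|(P,Q)(t)\|_{L^\infty}$.
\begin{itemize}
\item On $[t',t'\vee\frac t2]$, use $(t-s)^{-1/2}\le\sqrt2\,t^{-1/2}$. This gives the nonsingular kernel $g_1(s)=s^{-1/2}\|(v,b)(s)\|_{L^\infty}$.
\item On $[t'\vee\frac t2,t]$, use $(t/s)^{1/2}\le\sqrt2$. This gives $(t-s)^{-1/2}g_2(s)$ with $g_2=\|(v,b)\|_{L^\infty}$.
\end{itemize}
Lemma~\ref{B.3} with $p=3$ then bounds $h(t)$ by $(t')^{-1+\alpha}\|(f,g)\|_Y$ times $\exp\bigl(O(\|(v,b)\|_{L^1([t',t],s^{-1/2}ds;L^\infty)}+\|s^{1/2}(v,b)\|_{L^\infty}\|(v,b)\|^2_{L^2([t',t];L^\infty)})\bigr)$. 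By \eqref{1 estimate1 of v} and \eqref{2 estimate of v}, this factor is $e^{O(1)}(t/t')^{O(1/\log A)}$. Inside the lemma, the singular kernel is handled by H\"older with an exponent $p>2$, so that $(t-s)^{-1/2}\in L^{p'}$, together with the pointwise bound $s^{1/2}\|(v,b)\|_{L^\infty}\lesssim1$. That combination is exactly what your splitting lacks. If your ``weighted Gronwall lemma'' is meant to be this one, drop the dyadic and subinterval bookkeeping entirely.

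\textbf{Secondary issue: the H\"older detour.} No H\"older correction is needed. The operator $e^{\tau\Delta}\mathbb{P}\mathrm{div}$ maps $L^\infty$ to $L^\infty$ with norm $\lesssim\tau^{-1/2}$, because its kernel is integrable, and the paper uses precisely this bound. Your $C^\kappa\to L^\infty$ detour is actually harmful.
\begin{itemize}
\item For the free term with $t-t'\ge t'$, it only gives $(t-t')^{-1/2}(t')^{-1+\alpha}\bigl(\frac{t-t'}{t'}\bigr)^{\kappa/2}$. That is a fixed loss $(t/t')^{\kappa/2}$, not the bound you claim.
\item It also makes the $L^\infty$ Gronwall depend on a $C^\kappa$ bound for $(P,Q)$ that you only derive afterwards.
\end{itemize}

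\textbf{Secondary issue: the gradient estimate.} The gradient bound is not obtained by substituting the $L^\infty$ bound into Duhamel. On $[\frac{t+t'}2,t]$, the term $\|(v,b)\|_{L^\infty}\|\nabla(P,Q)\|_{C^\kappa}$ reappears on the right-hand side with the same critical singular kernel. The paper closes this with a second application of Lemma~\ref{B.3}, applied to $(t-t')^{1/2}t^{\frac{1+\kappa}{2}}\|\nabla(P,Q)\|_{C^\kappa}$.
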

\begin{proof}
    Without loss of generality, we write $(v,b)$ for either $(v^{(1)},b^{(1)})$ or $(v^{(2)},b^{(2)})$ and $(P(t,t'),Q(t,t'))$ for either $(P^{(1)}(t,t'),Q^{(1)}(t,t'))$ or $(P^{(2)}(t,t'),G^{(2)}(t,t'))$. For a fixed $t' \in (0,1)$, applying the Duhamel formula, we rewrite $(P(t,t'),Q(t,t'))$ and split it into three parts:
    \begin{align*}
        \begin{cases}
            P(t,t')f=e^{(t-t')\Delta}\mathbb{P}\text{div}f(t')-\int^{t}_{t'} e^{(t-s)\Delta}\mathbb{P}\text{div}\Bigl(
        P^{(i)}(t,t')f \otimes v^{(i)}(t) + v^{(i)}(t) \otimes P^{(i)}(t,t')f \\
        \quad\quad\quad\quad\quad\quad\quad\quad\quad\quad\quad\quad\quad\quad\quad\quad\quad\quad\quad\,\,- b^{(i)}(t)\otimes Q^{(i)}(t,t')g - Q^{(i)}(t,t')g\otimes b^{(i)}(t)\Bigr)ds\\
            \quad\quad\quad\quad:=I_1+II_1+III_1,\\
            Q(t,t')g=e^{(t-t')\Delta}\text{div}g(t')-\int^{t}_{t'} e^{(t-s)\Delta}\text{div}\Bigl(
        v^{(i)}(t)\otimes Q^{(i)}(t,t')g - Q^{(i)}(t,t')g\otimes v^{(i)}(t)\\
        \quad\quad\quad\quad\quad\quad\quad\quad\quad\quad\quad\quad\quad\quad\quad\quad\quad\quad+ P^{(i)}(t,t')f\otimes b^{(i)}(t) - b^{(i)}(t)\otimes P^{(i)}(t,t')f\Bigr)ds\\
            \quad\quad\quad\quad :=I_2+II_2+III_2,
        \end{cases}
    \end{align*}
    with $I_1$ and $I_2$ are the linear heat operator terms, $II_1$ and $II_2$ are the integral parts over over $[t',t'\vee (\frac{t}{2})]$ and $III_1$ and $III_2$ are the integral parts over $[t'\vee (\frac{t}{2}),t]$.
    
    We first adress $I_1$ and $I_2$. By applying Lemma \ref{2.7} to either $e^{(t-t')\Delta}\mathbb{P}$ or $e^{(t-t')\Delta}\mathbb{P}\mathrm{div}$, we choose the more favorable upper bound and have
    \begin{align}
        \| (I_1,I_2)\|_{L^{\infty}} &\lesssim (\| \nabla f(t')\|_{C^{\kappa}}+\| \nabla g(t')\|_{C^{\kappa}})\wedge((t-t')^{-\frac{1}{2}}(\| f(t')\|_{L^{\infty}}+\| g(t')\|_{L^{\infty}})\nonumber
        \\&\lesssim (t')^{-\frac{3}{2}+\alpha}\wedge ((t-t')^{-\frac{1}{2}}(t')^{-1+\alpha})\| (f,g)\|_{Y}\nonumber
        \\&\lesssim(t')^{-1+\alpha}(t'\vee (t-t'))^{-\frac{1}{2}}\| (f,g)\|_{Y}\nonumber
        \\&\lesssim(t')^{-1+\alpha}t^{-\frac{1}{2}}\| (f,g)\|_{Y}.\label{I1}
    \end{align}
    
    Next, we focus on $II_1$ and $II_2$. Combining Lemma \ref{2.7} and the fact that the integral vanishes identically unless $t>2t'$,  we obtain
    \begin{align}
        \| (II_1,II_2)\|_{L^{\infty}} &\lesssim \int^{t'\vee(\frac{t}{2})}_{t'} (t-s)^{-\frac{1}{2}}\| (v(s),b(s))\|_{L^{\infty}}\| (P(s,t')f,Q(s,t')g)\|_{L^{\infty}}ds\nonumber
        \\& \lesssim t^{-\frac{1}{2}} \int^{t'\vee(\frac{t}{2})}_{t'} (t-s)^{-\frac{1}{2}}\| (v(s),b(s))\|_{L^{\infty}}\| (P(s,t')f,Q(s,t')g)\|_{L^{\infty}}s^{\frac{1}{2}}ds.\label{II2}
    \end{align}
    
    For $III_1$ and $III_2$, by Lemma \ref{2.7} and the fact that $(\frac{t}{s})^\frac{1}{2} \le \sqrt{2}$ on the interval $s \in [t'\vee (\frac{t}{2}),t]$, it is easy to see that
    \begin{align}       
        \| (III_1,III_2)\|_{L^{\infty}} &\lesssim \int^{t}_{t'\vee(\frac{t}{2})} (t-s)^{-\frac{1}{2}}\| (v(s),b(s))\|_{L^{\infty}}\| (P(s,t')f.Q(s,t')g)\|_{L^{\infty}}ds\nonumber
        \\&\lesssim t^{-\frac{1}{2}} \int^{t}_{t'\vee(\frac{t}{2})}(t-s)^{-\frac{1}{2}}\| (v(s),b(s))\|_{L^{\infty}}\| (P(s,t')f.Q(s,t')g)\|_{L^{\infty}}s^{\frac{1}{2}}ds.\label{III3}
    \end{align}
    Letting $h(t):=t^{\frac{1}{2}}\| (P(s,t')f,Q(s,t')g)\|_{L^{\infty}}$ and summing the estimates from \eqref{I1}--\eqref{III3}, we deduce that
    \begin{align*}
        h(t) \lesssim (t')^{-1+\alpha}\|(f,g)\|_{Y}+\int^{t}_{t'} (s^{-\frac{1}{2}}+(t-s)^{-\frac{1}{2}})\|(v(s),b(s)\|_{L^{\infty}}h(s)ds.
    \end{align*}
    Now using Lemma \ref{B.3} with $p=3$, $g_1(s)=s^{-\frac{1}{2}}\|(v(s),b(s))\|_{L^{\infty}}$ and $g_2(s)=\|(v(s),b(s))\|_{L^{\infty}}$, we then have
    \begin{align*}
        h(t) &\lesssim (t')^{-1+\alpha}\|(f,g)\|_{Y}\exp(O(\| (v,b)\|_{L^1([t',t],t^{-\frac{1}{2}}dt;L^{\infty})}+\| s^{\frac{1}{2}}(v,b)\|_{L^{\infty}_{t,x}([t',t])}\| (v,b)\|_{L^2([t',t];L^{\infty})}))\\
        &\lesssim(t')^{-1+\alpha}\|(f,g)\|_{Y}\exp(O(1+(\log A)^{-1}\log(\frac{t}{t'}))).
    \end{align*}
    where \eqref{1 estimate1 of v} and \eqref{2 estimate of v} imply the second inequality.
    Choosing $A$ sufficiently large depending on $\epsilon$, the power becomes smaller than $\frac{\epsilon}{2}$ and we conclude that
    \begin{align}\label{3.13}
        \| (P(t,t')f,Q(t,t')g)\|_{L^{\infty}} \lesssim t^{-\frac{1}{2}}(t')^{-1+\alpha}(\frac{t}{t'})^{\frac{\epsilon}{2}}\| (f,g)\|_{Y}.
    \end{align}
    
    Next, we consider $C^{1,k}$ norm.  $I_1$ and $I_2$ stays the same but $II_1$ and $II_2$ split the integral into $[t',\frac{(t+t')}{2}]$, and $III_1$ and $III_2$ split the integral into $[\frac{(t+t')}{2},t]$. 
    
    Similarly, using Lemma \ref{2.7} and the fact that $t'\vee(t-t')^{1+\kappa}\ge(\frac{t}{2})^{1+\kappa}$ and $t\le 1$ give
    \begin{align}
        \| \nabla(I_1,I_2)\|_{C^{\kappa}} &\lesssim ((t-t')^{-\frac{1}{2}}\| \nabla(f(t'),g(t'))\|_{C^{\kappa}})\wedge((t-t')^{-1-\frac{\kappa}{2}}\|(f(t'),g(t'))\|_{L^{\infty}})\nonumber
        \\&\lesssim ((t-t')^{-\frac{1}{2}}(t')^{-\frac{3}{2}+\alpha})\wedge((t-t')^{-1-\frac{\kappa}{2}}(t')^{-1+\alpha})\|(f,g)\|_{Y}\nonumber
        \\&\lesssim (t-t')^{-\frac{1}{2}}(t')^{-1+\alpha}t^{-\frac{1+\kappa}{2}}\|(f,g)\|_{Y}.\label{Ia}
    \end{align}
    
    For $\nabla II_1$ and $\nabla II_2$, together with Lemma \ref{2.7}, \eqref{1 estimate1 of v} and \eqref{3.13}, we get
    \begin{align}
        \| \nabla(II_1,II_2)\|_{C^{\kappa}} &\lesssim \int^{\frac{(t+t')}{2}}_{t'} (t-s)^{-1-\frac{\kappa}{2}}\| (f(s),g(s))\|_{L^{\infty}}\|(P(s,t')f,Q(s,t')g)\|_{L^{\infty}}ds\nonumber
        \\&\lesssim (t')^{-1+\alpha-\frac{\epsilon}{2}}\int^{\frac{(t+t')}{2}}_{t'}(t-s)^{-1-\frac{\kappa}{2}}s^{-1+\frac{\epsilon}{2}}\|(f,g)\|_{Y}ds\nonumber
        \\&\lesssim (t-t')^{-\frac{\kappa}{2}}(t')^{-1+\alpha-\frac{\epsilon}{2}}t^{-1+\frac{\epsilon}{2}}\|(f,g)\|_{Y}.\label{IIb}
    \end{align}
    
    By \eqref{3.13} once again and the fact that $C^{1,\kappa}\cap L^{\infty}$ is a multiplication algebra, it holds that
\begin{align}
&\quad\| \nabla(III_1,III_2)\|_{C^{\kappa}} \nonumber\\
&\lesssim \int_{\frac{t+t'}{2}}^{t} (t-s)^{-\frac{1}{2}} \Bigl(
    \| \nabla(v(s),b(s))\|_{C^{\kappa}} \| (P(s,t')f,Q(s,t')g)\|_{L^{\infty}} \nonumber
\\&\quad\quad\quad\quad\quad\quad\quad\quad + \| (v(s),b(s))\|_{L^{\infty}} \| \nabla(P(s,t')f,Q(s,t')g)\|_{C^{\kappa}}
\Bigr) ds\nonumber \\
&\lesssim \int_{\frac{t+t'}{2}}^{t} (t-s)^{-\frac{1}{2}} \Bigl(
    s^{-\frac{3}{2}+\frac{\epsilon}{2}-\frac{\kappa}{2}} (t')^{-1+\alpha-\frac{\epsilon}{2}} \|(f,g)\|_{Y}  + \| (v(s),b(s))\|_{L^{\infty}} \| \nabla(P(s,t')f,Q(s,t')g)\|_{C^{\kappa}}
\Bigr) ds\nonumber\\
&\lesssim(t')^{-1+\alpha-\frac{\epsilon}{2}}t^{-1+\frac{\epsilon}{2}-\frac{\kappa}{2}}\|(f,g)\|_{Y}+\int_{\frac{t+t'}{2}}^{t} (t-s)^{-\frac{1}{2}}\| (v(s),b(s))\|_{L^{\infty}} \| \nabla(P(s,t')f,Q(s,t')g)\|_{C^{\kappa}}
\Bigr) ds\label{Ic}
\end{align}
    
     Hence, defining $h(t):=(t-t')^{\frac{1}{2}}t^{\frac{1+\kappa}{2}}\| \nabla(P(t,t'),Q(t,t'))\|_{C^{\kappa}}$ and summing \eqref{Ia}--\eqref{Ic},
    we then have
    \begin{align*}
        h(t)\lesssim (t')^{-1+\alpha}(\frac{t}{t'})^{\frac{\epsilon}{2}}\|(f,g)\|_{Y}+\int^{t}_{\frac{t+t'}{2}} (t-s)^{-\frac{1}{2}}\|(f(s),g(s))\|_{L^{\infty}}h(s)ds.
    \end{align*}
    Finally, we conclude once again by Lemma \ref{B.3}, \eqref{1 estimate1 of v} and \eqref{2 estimate of v} to gain the desired result.
\end{proof}

\subsection{Fixed point argument}\label{sec:4.3}
In Proposition \ref{Prop 4.1}, we have showed that the subcritical residual terms $\{(F^{(i)},G^{(i)})\}_{i\in\{1,2\}}$ can be arbitrary small. Combining this fact with the zero initial data, we can construct the perturbation $\{(w^{(i)},\rho^{(i)})\}_{i\in\{1,2\}}$ by the fixed point theorem. 

Let us introduce the critical sapce for the perturbation $\{(w^{(i)},\rho^{(i)})\}_{i\in\{1,2\}}$:
\begin{align*}
    X=\{(\omega,\rho)\in(C^0((0,1];C^{1,\kappa}(\mathbb{T}^5;\mathbb{R}^5))^2\},
\end{align*}
with norm
\begin{align*}
    \| (\omega,\rho)\|_{X}=\sup_{t\in (0,1]}(t^{\frac{1}{2}-\frac{\alpha}{2}}\|(\omega,\rho)\|_{L^{\infty}}+t^{1-\frac{\alpha}{2}}\|\nabla (\omega,\rho)\|_{C^{\kappa}})<\infty.
\end{align*}

\begin{prop}\label{prop:4.3}
    Let $A$ be sufficiently large.
    There exists $C'>0$ such that for all $\epsilon'\in (0,C'^{-1})$ and $i\in\{1,2\}$, there exist $(\omega^{(i)},\rho^{(i)})\in B_X(0,\epsilon')$ such that 
    \begin{align*}
        \begin{cases}
            \tilde{v}^{(i)}=v^{(i)}+\omega^{(i)},\\
            \tilde{b}^{(i)}=b^{(1)}+\rho^{(i)},
        \end{cases}
    \end{align*}
    is solution of (\ref{e:MHDe}).
    Furthermore, $(\omega^{(i)},\rho^{(i)}) \to (0,0)$ in $(C^{(-1+\frac{\alpha}{2})})^2$ as $t\to 0$.
\end{prop}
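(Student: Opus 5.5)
We construct $(\omega^{(i)},\rho^{(i)})$ as the fixed point of a Duhamel map built on the linearized propagator $(P^{(i)},Q^{(i)})$ of Proposition \ref{prop:4.2}. The unknown must satisfy the perturbed system written in the introduction with forcing $(\mathbb{P}\mathrm{div}F^{(i)},\mathrm{div}G^{(i)})$ and zero data. We treat the linear terms involving $(v^{(i)},b^{(i)})$ through the semigroup and keep only the forcing and the quadratic terms in the integral. This gives the map
\begin{align*}
\mathcal{T}(\omega,\rho)(t)=\int_0^t \bigl(P^{(i)}(t,s),Q^{(i)}(s\text{-}\mathrm{dependent})\bigr)\Bigl(F^{(i)}(s)-\omega\otimes\omega(s)+\rho\otimes\rho(s),\;G^{(i)}(s)-\omega\otimes\rho(s)+\rho\otimes\omega(s)\Bigr)\,ds .
\end{align*}
Here $(P^{(i)}(t,s),Q^{(i)}(t,s))$ acts on the pair (symmetric tensor, antisymmetric tensor) taken at time $s$. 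The tensor $\omega\otimes\omega-\rho\otimes\rho$ is symmetric and $\omega\otimes\rho-\rho\otimes\omega$ is antisymmetric, so the pair lies in the class on which the propagator is defined. By uniqueness for the linear problem (Proposition \ref{lemmaExistence}), a fixed point of $\mathcal{T}$ in $X$ solves the perturbed MHD system. Hence $(v^{(i)}+\omega^{(i)},b^{(i)}+\rho^{(i)})$ solves (\ref{e:MHDe}).

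\textbf{Step 1 (linear bound).} We show $\|\int_0^t(P,Q)(t,s)(f,g)\,ds\|_X\lesssim\|(f,g)\|_Y$. Fix $\epsilon<\alpha/2$ in Proposition \ref{prop:4.2}. The sup-norm part is then bounded by
\begin{align*}
\int_0^t s^{-1+\alpha-\epsilon}t^{-\frac12+\epsilon}\,ds\,\|(f,g)\|_Y\lesssim t^{-\frac12+\alpha}\|(f,g)\|_Y\le t^{-\frac12+\frac\alpha2}\|(f,g)\|_Y,
\end{align*}
using $t\le1$. For the gradient part, Proposition \ref{prop:4.2} gives the integrand
\begin{align*}
(t-s)^{-\frac12}t^{-\frac\kappa2}s^{-1+\alpha-\epsilon}t^{-\frac12+\epsilon}.
\end{align*}
To control the $C^\kappa$ seminorm with the weight $t^{1-\frac\alpha2}$, we split the integral at $s=t/2$. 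On $[0,t/2]$ we have $(t-s)^{-1/2}\sim t^{-1/2}$. On $[t/2,t]$ the singularity $(t-s)^{-1/2}$ is integrable. Together these give $t^{-1+\alpha-\kappa/2}$, and this requires $\kappa<\alpha$, which holds by the choice of $\kappa$.

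I expect the main obstacle to be the mismatch in how Proposition \ref{prop:4.2} normalizes the Hölder part ($t^{\kappa/2}$ and the $C^{1,\kappa}$ scaling). If the bound above does not close, the fallback is to trade via interpolation: bound $\|\nabla\cdot\|_{C^\kappa}$ by the $L^\infty$ bound and the $(t-s)^{-1-\kappa/2}$ heat bound, as was done in (\ref{IIb}).

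\textbf{Step 2 (bilinear bound).} For $(\omega,\rho),(\omega',\rho')\in X$, the product $\omega\otimes\omega'$ satisfies
\begin{align*}
t^{1-\alpha}\|\omega\otimes\omega'\|_{L^\infty}\le t^{1-\alpha}\,t^{-1+\alpha}\|\omega\|_X\|\omega'\|_X\le\|\omega\|_X\|\omega'\|_X .
\end{align*}
The gradient satisfies
\begin{align*}
t^{\frac32-\alpha}\|\nabla(\omega\otimes\omega')\|_{C^\kappa}\lesssim t^{\frac32-\alpha}\,t^{-\frac12+\frac\alpha2}\,t^{-1+\frac\alpha2}\|\omega\|_X\|\omega'\|_X=\|\omega\|_X\|\omega'\|_X .
\end{align*}
Here we use that $L^\infty\cap C^{1,\kappa}$ is an algebra. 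The same holds for the $\rho$ products, so the quadratic nonlinearity maps $X\times X$ to $Y$ boundedly.

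\textbf{Step 3 (contraction).} Combining Steps 1 and 2 with Proposition \ref{Prop 4.1} (which makes $\|(F^{(i)},G^{(i)})\|_Y$ as small as desired once $A$ is large) gives
\begin{align*}
\|\mathcal{T}(\omega,\rho)\|_X&\le C'\bigl(\|(F,G)\|_Y+\|(\omega,\rho)\|_X^2\bigr),\\
\|\mathcal{T}(z)-\mathcal{T}(z')\|_X&\le C'\bigl(\|z\|_X+\|z'\|_X\bigr)\|z-z'\|_X .
\end{align*}
Choose $\epsilon'<C'^{-1}/2$ and $A$ so large that $C'\|(F,G)\|_Y\le\epsilon'/2$. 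Then $\mathcal{T}$ maps $B_X(0,\epsilon')$ to itself and is a contraction there, and Banach's fixed point theorem applies.

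\textbf{Step 4 (time regularity and the limit $t\to0$).} Continuity in time with values in $C^{1,\kappa}$ follows from the smoothness of the propagator on $[t',1]$. For the limit, write $\omega=\mathbb{P}\mathrm{div}$ of a tensor and estimate that tensor by the sup bound above. From Step 1, $\omega$ is the divergence of a tensor of size $\lesssim t^{\alpha/2}\cdot t^{\frac\alpha2}\to0$; equivalently, the $C^{-1+\frac\alpha2}$ norm is bounded by interpolating the $L^\infty$ bound $t^{-\frac12+\alpha}$ against the heat smoothing gain $t^{\frac12-\frac\alpha4}$. This yields decay like $t^{\alpha/4}$, so $(\omega^{(i)},\rho^{(i)})\to(0,0)$ in $(C^{-1+\frac\alpha2})^2$ as $t\to0$.
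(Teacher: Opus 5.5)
Your Steps 1--3 match the paper's proof: the same Duhamel map built on $(P^{(i)},Q^{(i)})$, the same integration of the bounds of Proposition \ref{prop:4.2} (with $\epsilon<\alpha$ and $\kappa<\alpha$), the same product estimate $X\times X\to Y$, and Banach's fixed point theorem. One small point: Proposition \ref{lemmaExistence} gives existence, not uniqueness. You do not need uniqueness there, because a fixed point of $\mathcal{T}$ solves the perturbed system directly by Duhamel's principle for the propagator.

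The gap is in Step 4, the convergence $(\omega^{(i)},\rho^{(i)})\to0$ in $C^{-1+\frac{\alpha}{2}}$.

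\textbf{Why the propagator argument fails.} In the representation $\omega(t)=\int_0^tP(t,s)(\cdots)\,ds$, Proposition \ref{prop:4.2} controls $P(t,s)f$ only in $L^\infty$ and $C^{1,\kappa}$. It gives no bound on a tensor potential of $P(t,s)f$, and such a potential contains time integrals of $P\otimes v$, $b\otimes Q$, and so on. So Step 1 does not support the claim that $\omega$ is the divergence of a tensor of size $t^{\alpha}$. What Step 1 does give, $\|\omega(t)\|_{L^\infty}\lesssim t^{-\frac12+\alpha}$, only yields $\|\omega(t)\|_{C^{-1+\frac{\alpha}{2}}}\lesssim t^{-\frac12+\alpha}$, which blows up. Interpolating with the even worse gradient bound cannot produce decay in a weaker norm. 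Your exponent count also does not close: $t^{-\frac12+\alpha}\cdot t^{\frac12-\frac{\alpha}{4}}=t^{\frac{3\alpha}{4}}$, not $t^{\frac{\alpha}{4}}$.

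\textbf{What the paper does instead.} The missing idea is to drop the propagator for this step and use the plain heat Duhamel formula (\ref{czong}).
\begin{itemize}
\item There the linear terms $v\otimes\omega$, $b\otimes\rho$, $\omega\otimes b$, \dots\ appear explicitly as tensors under $e^{(t-t')\Delta}\mathbb{P}\mathrm{div}$ and $e^{(t-t')\Delta}\mathrm{div}$.
\item By (\ref{2.7}) with $m=1$, these operators map $C^{\frac{\alpha}{2}}$ tensors to $C^{-1+\frac{\alpha}{2}}$ with no factor of $t-t'$.
\item It then suffices to bound every tensor in $C^{\frac{\alpha}{2}}$. The terms $F,G$ and the quadratic terms are $O((t')^{-1+\frac{3\alpha}{4}})$.
\item The cross terms are bounded using $\|(v,b)(t')\|_{L^\infty}\lesssim (t')^{-\frac12}$ from (\ref{1 estimate1 of v}) and $\|(\omega,\rho)(t')\|_{L^\infty}\lesssim (t')^{-\frac12+\frac{\alpha}{2}}$, giving $O((t')^{-1+\frac{\alpha}{4}})$.
\end{itemize}
These cross terms do not appear in your formulation, yet they set the rate $t^{\frac{\alpha}{4}}$. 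They are integrable only because of the extra $\frac{\alpha}{2}$ over critical scaling built into the $X$ norm.
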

\begin{proof}
    Through this proof, we suppress the dependence on $i$ of $(v^{(i)},b^{(i)})$, $(\omega^{(i)},\rho^{(i)})$ and $(F^{(i)},G^{(i)})$.

    A necessary condition for $(\tilde{v},\tilde{b})=(v+\omega,\tilde{v}+\rho)$ to solve the \eqref{e:MHDe} with initial data $(v(0),b(0))$ is that the perturbation $(\omega,\rho)$ must satisfy
    \begin{align*}
        \begin{cases}
            (\partial_{t}-\Delta)\omega+\mathbb{P}\text{div}\, (v\otimes \omega+\omega\otimes v-b\otimes \rho- \rho\otimes b)=\mathbb{P}\text{div}\, (F-\omega\otimes \omega+\rho \otimes \rho),\\
            (\partial_{t}-\Delta)\rho+\text{div}\,(v\otimes \rho-\rho\otimes v+\omega\otimes b-b\otimes \omega)=\text{div}\, (G-\omega\otimes \rho+\rho\otimes \omega),\\
            \rho(0,\cdot)=\omega(0,\cdot)=0.
        \end{cases}
    \end{align*}
    where $(F,G)$ is as in Proposition \ref{Prop 4.1}. Recalling that $(P(t,t'),Q(t,t'))$ is the semigroup of the linearization around $(v,b)$ defined in Subsection \ref{sec:4.2}, we solve for $(\omega,\rho)$ as a fixed point of the $Y\rightarrow X$ map $(\omega,\rho)\mapsto(T^1(\omega),T^2(\rho))$:
    \begin{align*}
        (\omega,\rho)\mapsto \begin{pmatrix}
        T^1(\omega)(t)\\T^2(\rho)(t)
    \end{pmatrix}
        =\begin{pmatrix}
        \int^{t}_{0} P(t,t')(F-\omega\otimes \omega+\rho\otimes \rho)(t')dt'\\\int^t_0 Q(t,t')(G-\omega\otimes \rho+\rho\otimes \omega)(t')dt'
    \end{pmatrix}
    \end{align*}
    
     Firstly, we prove that $(T_1,T_2)$ is a well-defined operator on $X$.
    Due to the definition of $X$ and $Y$, we have the elementary product estimate
    \begin{align*}
        \|\omega\otimes\omega\|_{Y}\lesssim \sup_{0<t\le1}(t^{\frac{3}{2}-\alpha}\|\omega(t)\|_{L^{\infty}}\|\nabla \omega(t)\|_{C^{\kappa}}+t^{1-\alpha}\|\omega(t)\|^2_{L^{\infty}})\lesssim\|(\omega,\rho)\|^2_{X}.
    \end{align*}
    We then similarly have $\|\rho\otimes \rho\|_{Y},\|\omega\otimes\rho\|_{Y},\|\rho\otimes \omega\|_{Y}\lesssim \|(\omega,\rho)\|_{X}$.
    Combining this with Propostion \ref{Prop 4.1} and \ref{prop:4.2} and the fact that $\epsilon < \alpha$,
    \begin{align*}
        \|(T^1(\omega)(t),T^2(\rho)(t))\|_{L^{\infty}} &\lesssim t^{-\frac{1}{2}+\epsilon}\int^t_0 (t')^{-1+\alpha-\epsilon}dt'(\|(\omega,\rho)\|^2_X+\|(F,G)\|_Y)
        \\& \lesssim t^{-\frac{1}{2}+\alpha}(\|(\omega,\rho)\|_X^2+\epsilon_0).
    \end{align*}
    and 
    \begin{align*}
        \|\nabla(T^1(\omega)(t),T^2(\rho)(t))\|_{C^{\kappa}} &\lesssim t^{-\frac{1}{2}+\epsilon}\int^t_0 (t')^{-1+\alpha-\epsilon}(t-t')^{-\frac{1}{2}}dt'(\|(\omega,\rho)\|^2_X+\|(F,G)\|_Y)
        \\& \lesssim t^{-1+\alpha}t^{-\frac{\kappa}{2}}(\|(\omega,\rho)\|_X^2+\epsilon_0).
    \end{align*}
    We let $\epsilon_0$ in Proposition \eqref{Prop 4.1} be sufficiently small and then we get 
    \begin{align*}
        \|(T^{1}(\omega)(t),T^2(\rho)(t))\|_{X}\le O(\|(w,\rho)\|_{X})+\frac{1}{2}\epsilon',
    \end{align*}
    having also used $t\le 1$. By a similar calculation,
    \begin{align*}
            \| (T^1(\omega_1)-T^1(\omega_2),T^2(\rho_1)-T^2(\rho_2))\|_X\le \|(\omega_1-\omega_2,\rho_1-\rho_2)\|_X(\|(\omega_1,\rho_1)\|_{X}+\|(w_2,\rho_2)\|_X).
    \end{align*}
    Choosing $C'$ sufficiently large to compensate for the implicit constants, it follows that $T$ is a contraction on the $B_X(0,\epsilon')$ for all $\epsilon' \in (0,C'^{-1})$, and we conclude by the banach fix point theorem.
    
    Finally, we prove control of $\omega$ and $\rho$ near the initial time using the Duhamel formula
    \begin{align}
        \begin{cases}
            \omega(t)=\int^{t}_{0} e^{(t-t')\Delta}\mathbb{P}\mathrm{div}\,(F-\omega\otimes \omega+\rho \otimes\rho-v\otimes \omega-\omega\otimes v+b\otimes \rho+\rho\otimes b)(t')dt',\\
            \rho(t)=\int^{t}_{0} e^{(t-t')\Delta}\text{div}\,(G-\omega\otimes \rho+\rho\otimes \omega-v\otimes \rho+\rho\otimes v-\omega\otimes b+b\otimes \omega)(t')dt',
        \end{cases}\label{czong}
    \end{align}
By \eqref{2.7}, \eqref{1 estimate1 of v}, Proposition \ref{Prop 4.1}, the interpolation inequality and the fact that $\|(w,\rho)\|_X \le \epsilon' $, we arrive at
\begin{align}
    \| (F,G)\|_{C^{\frac{\alpha}{2}}}\lesssim (t')^{-1+\frac{3\alpha}{4}},\label{c1}
\end{align}
\begin{align}
    \|(\omega,\rho)\otimes (\omega,\rho)\|_{C^{\frac{\alpha}{2}}}\lesssim (t')^{-1+\frac{3\alpha}{4}}\|(\omega,\rho)\otimes(\omega,\rho)\|_{Y}\lesssim   (t')^{-1+\frac{3\alpha}{4}}\|(\omega,\rho)\|_{X}^2\lesssim (t')^{-1+\frac{3\alpha}{4}} \label{c2}
\end{align}
and
\begin{align}
    \|(v,b)\otimes(w,\rho)\|_{C^{\frac{\alpha}{2}}} &\lesssim \|(v,b)\|_{C^{\frac{\alpha}{2}}}\|(w,\rho)\|_{L^{\infty}}+\|(v,b)\|_{L^{\infty}}\|(\omega,\rho)\|_{C^{\frac{\alpha}{2}}}\nonumber
    \\&\lesssim (t')^{-\frac{1}{2}-\frac{\alpha}{4}}(t')^{\frac{\alpha}{2}-\frac{1}{2}}+(t')^{-\frac{1}{2}}(t')^{\frac{\alpha}{4}-\frac{1}{2}}\nonumber
    \\&\lesssim (t')^{-1+\frac{\alpha}{4}}\label{c3}
\end{align}
Plugging \eqref{c1}, \eqref{c2} and \eqref{c3} into (\ref{czong}), we obtain
\begin{align*}
    \|(\omega(t),\rho(t)\|_{C^{-1+\frac{\alpha}{2}}} \lesssim \int^{t}_{0} ((t')^{-1+\frac{\alpha}{2}} +(t')^{-1+\frac{3\alpha}{4}})dt'\lesssim t^{\frac{\alpha}{4}},\quad\forall t\in (0,1],
\end{align*}
which implies that the pair $(\omega(t),\rho(t)$ tends to $0$ in ${C^{-1+\frac{\alpha}{2}}}$ as $t\rightarrow0$. 

\end{proof}

\section{Proof of the non-uniqueness theorem}\label{Proof of the Non-Uniqueness}
Recall that
    \begin{align*}
        \begin{cases}
            v^{(1)}\triangleq\sum_{k\geq0\,even}\mathrm{curl}\mathrm{curl}(\psi_k+\theta_k)+\sum_{k\geq0\,odd}\bar{v}_k,\\b^{(1)}\triangleq\sum_{k\geq0\,even}\mathrm{curl}\mathrm{curl}\,\phi_k+\sum_{k\geq0\,odd}\bar{b}_k ,
        \end{cases}
    \end{align*}
    and 
    \begin{align*}
    \begin{cases}
        v^{(2)}\triangleq\sum_{k\geq0\,odd}\mathrm{curl}\mathrm{curl}(\psi_k+\theta_k)+\sum_{k\geq0\,even}\bar{v}_k,\\ b^{(2)}\triangleq\sum_{k\geq0\,odd}\mathrm{curl}\mathrm{curl}\,\phi_k^0+\sum_{k\geq0\,even}\bar{b}_k. 
    \end{cases} 
    \end{align*}

In Section \ref{sec:4.3}, we have shown that for $i \in \{1,2\}$, there exist $(\omega^{(i)},\rho^{(i)}) \in C^0((0,1];C^{1,\kappa})$ obeying suitable equations such that $(\tilde{v}^{(i)},\tilde{b}^{(i)})=(v^{(i)}+\omega^{(i)},b^{(i)}+\rho^{(i)})$ obey \eqref{e:MHDe}. Moreover, by standard regularity theory, $(\tilde{v}^{(i)},\tilde{b}^{(i)})$ is smooth on $(0,1]\times \mathbb{T}^5$.

Next we argue that both solutions attain the initial data $(U^0,B^0)$. As shown in Proposition \ref{prop:4.3}, $(\omega^{(i)},\rho^{(i)}) \to 0$ in $C^{-1+\frac{\alpha}{2}}(\mathbb{T}^5)$. However, this subcritical topology is too stong for the principal part of the solutions. Instead we claim that for all $1<p<\infty$, $(v^{(i)},b^{(i)})\to (U^0,B^0)$ in $\dot{W}^{-1,p}(\mathbb{T}^5)$.
Therefore, for the heat-dominated part of the data, we use Definition \ref{defi of v} and Proposition \ref{prop nablampsi} to estimate
\begin{align*}
    &\quad\quad\| \sum_{k\geq 0, even}(\mathrm{curl}(\psi_k+\theta_k)-\mathrm{curl}(\psi_k^0+\theta_k^0))\|_{L^p}\\
    &\lesssim\| \sum_{k\geq 0, even}(\mathrm{curl}(\psi_k^0+\theta_k^0)\mathrm{exp}(-N_k^2t)-\mathrm{curl}(\psi_k^0+\theta_k^0))\|_{L^p} \\
    &\lesssim \left(\sum_{k,\xi\in\Lambda_U}\|\mathrm{curl}\,(a_{\xi,k}\Psi^0_{\xi,k})\|_{L^p}+\sum_{k,\xi\in\Lambda_B}\|\mathrm{curl}\,(b_{\xi,k}\Theta^0_{\xi,k})\|_{L^p}\right)(1-e^{-N^2_kt})
    \\&\lesssim \sum_{k} |{\Omega}_k^3|^{\frac{1}{p}}(1\wedge(N_k^2t))
    \\&\lesssim \sum_{k:N_k\le t^{-\frac{1}{4}}} N_k^2t+\sum_{k:N_k>t^{-\frac{1}{4}}}2^{-\frac{k}{p}}.   
\end{align*}
By the same reasoning as before, we also obtain 
\begin{align*}
    \| \sum_{k\geq 0, even}(\mathrm{curl}\,\phi_k-\mathrm{curl}\,\phi_k^0)\|_{L^p}\lesssim \sum_{k:N_k\le t^{-\frac{1}{4}}} N_k^2t+\sum_{k:N_k>t^{-\frac{1}{4}}}2^{-\frac{k}{p}}.  
\end{align*}
Since the series in the right is finite and tends to 0 as $t\to0$, we conclude that
\begin{align*}
    (\mathrm{curl}\sum_{k,even}\mathrm{curl}\,(\psi_k+\theta_k)(t),\mathrm{curl}\sum_{k,even}\mathrm{curl}\,\phi_k(t))\underset{\dot{W}^{-1,p}}{\to}(\mathrm{curl}\sum_{k\in N_i}\mathrm{curl}\,(\psi_k^0+\theta_k^0),\mathrm{curl}\sum_{k\in N_i}\mathrm{curl}\,\phi^0_k).
\end{align*}

 For the inverse cascade-dominated part $\bar{v}_k(t)$, with Definition \ref{defi of v}, we split it into two parts:
\begin{align*}
    \mathrm{curl}\mathrm{curl}(\psi_k^0+\theta_k^0)-\bar{v}_k(t)=I_1+I_2,
\end{align*}
with
\begin{align*}
        I_1\triangleq&\frac{1}{2}N_{k+1}^{-2}\mathbb{P}\mathrm{div}\left(\sum_{\xi\in\Lambda_U}A_{\xi,k+1}a_{\xi,k+1}^2\xi_1\otimes\xi_1+\sum_{\xi\in\Lambda_B}B_{\xi,k+1}b_{\xi,k+1}^2(\xi_2\otimes\xi_2-\xi_1\otimes\xi_1)\right)\\&\times(1-\mathrm{exp}(-2N_{k+1}^2t))
\end{align*}
and 
\begin{align*}
        I_2\triangleq &\mathrm{curl}\mathrm{curl}\,(\psi_k^0+\theta_k^0)\\
        &-\frac{1}{2}N_{k+1}^{-2}\mathbb{P}\mathrm{div}\left(\sum_{\xi\in\Lambda_U}A_{\xi,k+1}a_{\xi,k+1}^2\xi_1\otimes\xi_1+\sum_{\xi\in\Lambda_B}B_{\xi,k+1}b_{\xi,k+1}^2(\xi_2\otimes\xi_2-\xi_1\otimes\xi_1)\right).
\end{align*}
By the same computation from Remark \ref{intialequal}, $I_2$ is identically zero. For $I_1$, using Propostion \ref{prop nablampsi} and \eqref{e Omegak}, we arrive at
\begin{align*}
    \|I_1\|_{\dot{W}^{-1,p}}&\lesssim \sum_{k,\xi\in\Lambda_U}(1-e^{-2N^2_{k+1}t})N^{-2}_{k+1}\|a_{\xi,k+1}^2\|_{L^p}+\sum_{k,\xi\in\Lambda_B}(1-e^{-2N^2_{k+1}t})N^{-2}_{k+1}\|b_{\xi,k+1}^2\|_{L^p}\\
    &\lesssim\sum_k(1\wedge(N^2_{k+1}t))|\Omega_k^3|^{\frac{1}{p}}.
\end{align*}
Similarly, it is easy to see that 
\begin{align*}
    \|\sum_{k\geq0\,odd} ( \mathrm{curl}\mathrm{curl}\,\phi_k^0-\bar{b}_k(t))\|_{\dot{W}^{-1,p}}\lesssim \sum_{k}(1\wedge(N^2_{k+1}t))|\Omega_k^3|^{\frac{1}{p}}.
\end{align*}
The previous series in the right is finite and also tends to 0 as $t\to0$, we then complete the proof that the initial data is attained by both solutions. Moreover, we prove that $\{(\tilde{v}^{(i)},\tilde{b}^{(i)})\}$ are belong to $C^0([0,\infty);\dot{W}^{-1,p}(\mathbb{T}^5))$.

Finnally, we show that the two solutions are distinct. We consider a time scale when all but the lowest frequency mode should have dissipated away, for instance $t_0=N_0^{-2}$. Recall from the Proposition \ref{prop:4.3} that $(\omega^{(i)},\rho^{(i)})\in B_{X}(0,\epsilon')$. By the triangle inquality, we get
\begin{align}
&\quad\|(v^{(1)}(t_0)-v^{(2)}(t_0),b^{(1)}(t_0)-b^{(2)}(t_0))\|_{L^\infty}\nonumber
\\&\ge \|(v_0,b_0)\|_{L^\infty} - \sum_{k\ge 1}\|(v_k(t_0),b_k(t_0))\|_{L^\infty}  - \sum_{k\ge 0}\|(\bar{v}_k(t_0),\bar{b}_k(t_0))\|_{L^\infty} \nonumber\\&
 - \|(w^{(1)}(t_0),\rho^{(1)}(t_0))\|_{L^\infty}  - \|(w^{(2)}(t_0),\rho^{(2)}(t_0))\|_{L^\infty}.\label{all}
\end{align}
By Lemma \ref{lemma vk}, one obtains 
\begin{align}
    \sum_{k\ge 1}\|(v_k(t_0),b_k(t_0))\|_{L^\infty}&\lesssim \sum_{k\ge 1}N_k\exp{(-N_k^2N_0^{-2})}\lesssim N_1\exp{(-N_1^2N_0^{-2})}
   \lesssim N_0\left(\frac{N_1}{N_0}\right)^{-100},\label{y1}
\end{align}
and
\begin{align}
    \sum_{k\ge 0}\|(\bar{v}_k(t_0),\bar{b}_k(t_0))\|_{L^{\infty}}\lesssim\sum_{k\ge 0}N_k\exp{(-N_{k+1}^2N_0^{-2})}\lesssim N_0\exp{(-N_1^2N_0^{-2})}\lesssim N_0\left(\frac{N_1}{N_0}\right)^{-100}.\label{y2}
\end{align}
 For the $\{(\omega^{(i)},\rho^{(i)})\}_{i\in\{1,2\}}$, we have by Proposition \ref{prop:4.3}
\begin{align}
    \|(\omega^{(i)}(t_0),\rho^{(i)}(t_0))\|_{L^{\infty}}\lesssim \epsilon't_0^{-\frac{1}{2}+\frac{\alpha}{2}}=\epsilon'N_0^{1-\alpha}=\epsilon'A^{-\alpha\gamma}N_0.\label{y3}
\end{align}
Recall the notation of Lemma \ref{lemmvp} and \eqref{e vek}, we have the decomposition $(v_0,b_0)=(v_0^p+v_0^e,b_0^p+b_0^e)$ and the estimate
\begin{align}
    \|(v_0^e,b_0^e)\|_{L^{\infty}}\lesssim M_0 \lesssim 2N_0A^{-\gamma+1}.\label{y4}
\end{align}
We now only need to establish the lower bound on $(v_0^p(t,x),b_0^e(t,x))$, recalling from Definition \ref{defi of psiok} that $(a_{\xi,0},b_{\xi',0})$ takes a particularly simple form when $k=0$. By inspection of Definition \ref{defi potential}, we easily have $\Psi_{1,0}\gtrsim N_0^{-2}$, $\Phi_{1,0}\gtrsim N_0^{-2}$ and $\Theta_{1,0}\gtrsim N_0^{-2}$, which yields 
\begin{align}
    \|(v_0^p(t_0),b^p_0(t_0))\|_{L^{\infty}}\ge C^{-1}N_0,\label{y5}
\end{align}
 where $C$ is a large enough constant. Plugging \eqref{y1}-\eqref{y5} into (\ref{all}), we have
\begin{align*}
    \|(v^{(1)}(t_0)-v^{(2)}(t_0),b^{(1)}(t_0)-b^{(2)}(t_0))\|_{L^\infty}\ge C^{-1}N_0-A^{-c(b,\alpha,\lambda)}N_0,
\end{align*}
for a $c(b,\alpha,\lambda)>0$. With a sufficiently large choice of $A$, we conclude that $(v^{(1)},b^{(1)})$ and $(v^{(2)},b^{(2)})$ are distinct.

Finally, extend our solutions on $[0,1]$ to be global solutions. Recall from \eqref{1 estimate1 of v} that $\|\nabla^m(v^{(i)},b^{(i)})|_{t=1}\|_{L^{\infty}}\le \exp{(-N_0^2/O_m(1))}$ for all $m\ge 0$. Then $\|(P_Nv^{(i)},P_Nb^{(i)})|_{t=1}\|_{L^{\infty}}\lesssim \langle N\rangle^{-5}\exp{-A^{2\lambda}}$ and therefore $(v^{(i)},b^{(i)})|_{t=1}$ can be made arbitrarily small in, say, $H^{-\frac{1}{2}}(\mathbb{T})$ or $B^{-1}_{\infty,2}\subset BMO^{-1}$, and existence of a global-in-time solution follows.

\appendix

\section{Tools from convex integration }
In this section, we recall some useful tools which is developed in the convex integration method. 

\begin{lemm}\label{lemmaNS}
    There exists a set $\Lambda_U \subset \mathbb{S}^{d-1} \cap \mathbb\,{\mathbb{Q}}^d$ that consists of vectors $\xi$ with associated orthonormal bases $(\xi,\xi_1,  \cdots, \xi_{d-1})$, $\varepsilon_0 > 0$, $C>0$ and smooth positive functions $\Gamma_{\xi}: B_{\varepsilon}(\mathrm{Id}) \to \mathbb{R}$, where $B_{\varepsilon_0}(\mathrm{Id})$ is the ball of radius $\varepsilon_0$ centered at $\mathrm{Id}$ in the space of $d \times d$ symmetric matrices, such that we have the following identity:
\begin{gather}
    S = \sum_{\xi \in \Lambda_U} \Gamma_{\xi}^2(S) \xi_1 \otimes \xi_1 ,\quad\forall S\in B_{\varepsilon_0}(\mathrm{Id}),\nonumber\\
    \frac{1}{C}\leq\Gamma_{\xi}(S)\leq C,\quad\forall S\in B_{\varepsilon_0}(\mathrm{Id}).\label{Gamma xiajie}
\end{gather}
\end{lemm}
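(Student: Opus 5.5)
The plan is to reduce the statement to a finite-dimensional linear algebra problem in the space $S^{d\times d}(\mathbb{R})$ of symmetric matrices, which has dimension $d(d+1)/2$. The smooth functions $\Gamma_\xi$ will then come from a linear right inverse plus a square root.

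\textbf{Step 1: a rational spanning family with Id in the interior of the cone.} Let $e_1,\dots,e_d$ be the standard basis. For $i<j$ set
\begin{align*}
u^{\pm}_{ij}\triangleq\tfrac35 e_i\pm\tfrac45 e_j .
\end{align*}
These are rational unit vectors. Take the family $\{e_i\}_{i}\cup\{u^\pm_{ij}\}_{i<j}$. It spans $S^{d\times d}$ under $u\mapsto u\otimes u$, because
\begin{align*}
u^+_{ij}\otimes u^+_{ij}-u^-_{ij}\otimes u^-_{ij}=\tfrac{24}{25}(e_i\otimes e_j+e_j\otimes e_i).
\end{align*}
It also writes $\mathrm{Id}$ with strictly positive coefficients. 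Indeed,
\begin{align*}
u^+_{ij}\otimes u^+_{ij}+u^-_{ij}\otimes u^-_{ij}=\tfrac{18}{25}e_i\otimes e_i+\tfrac{32}{25}e_j\otimes e_j .
\end{align*}
Choose a weight $\lambda>0$ on every $u^\pm_{ij}\otimes u^\pm_{ij}$. Then $\mathrm{Id}=\sum_i c_i e_i\otimes e_i+\lambda\sum_{i<j}(u^+_{ij}\otimes u^+_{ij}+u^-_{ij}\otimes u^-_{ij})$ with $c_i=1-O(d\lambda)>0$, provided $\lambda$ is small.

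\textbf{Step 2: smooth positive coefficients near Id.} Index the family as $\{w_n\}_{n=1}^{K}$ and let $L:\mathbb{R}^K\to S^{d\times d}$ be the map $L(g)=\sum_n g_n w_n\otimes w_n$. Step 1 shows $L$ is surjective, so it has a linear right inverse $R$. Let $g^0$ be the positive coefficient vector from Step 1 and define
\begin{align*}
g(S)\triangleq g^0+R(S-\mathrm{Id}),
\end{align*}
which gives $L(g(S))=S$. Choose $\varepsilon_0$ so small that every component satisfies $g_n(S)\in[\tfrac12\min g^0,2\max g^0]$ on $B_{\varepsilon_0}(\mathrm{Id})$. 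Then $\Gamma_n\triangleq\sqrt{g_n}$ is smooth (in fact real analytic) and positive, with the two-sided bound (\ref{Gamma xiajie}) for a suitable $C$.

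\textbf{Step 3: attaching rational orthonormal frames.} For each $w_n$ we need $\xi\in\mathbb{S}^{d-1}\cap\mathbb{Q}^d$ and a rational orthonormal basis $(\xi,\xi_1,\dots,\xi_{d-1})$ with $\xi_1=w_n$. If $w_n=e_1$, use the identity matrix. Otherwise use the Householder reflection $H=\mathrm{Id}-2\frac{(e_1-w_n)\otimes(e_1-w_n)}{|e_1-w_n|^2}$. It is orthogonal and has rational entries, and $He_1=w_n$. Set $\xi_1\triangleq He_1$, $\xi\triangleq He_2$ and $\xi_{m}\triangleq He_{m+1}$ for $m\ge 2$.

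If distinct $n$ produce the same $\xi$, the labels $\xi$ would collide. In that case compose $H$ with a rational rotation by the Pythagorean angle $(3/5,4/5)$ in the plane spanned by $He_2,He_3$, or use different Pythagorean triples. This keeps $\xi_1$ fixed and rationality intact, and makes the $\xi$'s pairwise distinct. We then relabel by $\Lambda_U=\{\xi\}$ and $\Gamma_\xi\triangleq\Gamma_n$.

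The only mildly delicate points are rationality of the frames and distinctness of the labels $\xi$, both handled in Step 3. The decomposition itself is routine once the spanning family with a positive representation of $\mathrm{Id}$ is in hand.
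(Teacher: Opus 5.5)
Your proof is correct. Note that the paper does not prove this lemma at all: it only recalls it in the appendix as a standard tool from convex integration, the geometric lemma of De Lellis--Sz\'ekelyhidi type. So the only meaningful comparison is with the standard argument in that literature.

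Your Steps 1--2 are essentially that standard argument. You choose rational directions whose rank-one tensors span $S^{d\times d}$, with $\mathrm{Id}$ written as a strictly positive combination. You then take coefficients that are affine in $S$ and are positive on a small ball around $\mathrm{Id}$, and take square roots. The one variation is that your family $\{e_i\}\cup\{u^{\pm}_{ij}\}$ is overcomplete, so you must fix a linear right inverse $R$ of $L$. If you instead used exactly $d(d+1)/2$ directions whose tensors form a basis, the coefficients would be unique and no such choice would be needed. The two versions are equally valid, and yours has the advantage of a very explicit Pythagorean family.

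Your Step 3 makes explicit something the statement and the literature usually leave implicit. The frames $(\xi,\xi_1,\dots,\xi_{d-1})$ must be rational, which the paper in fact relies on when it periodizes $\varphi_\xi$ and makes $\xi_1\mathbb{R}+\xi_2\mathbb{R}+x_\xi$ close up in $\mathbb{T}^5$. The labels $\xi$ must also be pairwise distinct. The Householder reflection gives the rational frame with $\xi_1=w_n$. The rotation by a Pythagorean angle in the plane of $He_2,He_3$ needs $d\geq 3$, which is fine here since $d=5$. Because there are infinitely many rational points on that circle and only finitely many indices $n$, a greedy choice gives distinct $\xi$.

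The same freedom also lets you make $\Lambda_U$ disjoint from $\Lambda_B$. This is not part of the stated lemma, but the paper tacitly uses it when it separates the supports of $\varphi_{\xi,k}$ over $\xi\in\Lambda_U\cup\Lambda_B$.
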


\begin{lemm}\label{lemmaMHD}
    There exists a set $\Lambda_B \subset \mathbb{S}^{d-1} \cap \mathbb\,{\mathbb{Q}}^d$ that consists of vectors $\xi$ with associated orthonormal bases $(\xi,\xi_1, \xi_2, \cdots, \xi_{d-1})$, $\varepsilon_0 > 0$, $C>0$ and smooth positive functions $\gamma_{\xi}: \tilde{B}_{\varepsilon_0}(0) \to \mathbb{R}$, where $\tilde{B}_{\varepsilon_0}(0)$ is the ball of radius $\varepsilon_0$ centered at $0$ in the space of $d \times d$ skew-symmetric matrices, such that we have the following identity:
\begin{gather}
    A = \sum_{\xi \in \Lambda_B} \gamma_{\xi}^2(A) \left(\xi_2 \otimes \xi_1 - \xi_1 \otimes \xi_2\right),\quad\forall A \in \tilde{B}_{\varepsilon_0}(0),\nonumber\\
    \frac{1}{C}\leq\gamma_{\xi}(A)\leq C,\quad\forall A \in \tilde{B}_{\varepsilon_0}(0).\label{gamma jie}
\end{gather}
\end{lemm}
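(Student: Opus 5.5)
The statement to prove is Lemma \ref{lemmaMHD}: a finite set $\Lambda_B$ of rational unit directions $\xi$, each with an orthonormal frame, and smooth coefficients $\gamma_\xi$ on a small ball $\tilde B_{\varepsilon_0}(0)$ of skew-symmetric matrices, such that every such $A$ is represented as $\sum_\xi\gamma_\xi^2(A)(\xi_2\otimes\xi_1-\xi_1\otimes\xi_2)$ with $\gamma_\xi$ bounded above and below. The argument is a geometric lemma in the spirit of the classical ones for Euler/MHD convex integration (cf.\ the MHD geometric lemmas of Beekie--Buckmaster--Vicol type). I would prove it by reducing to a linear algebra statement about a spanning set of the $\binom{d}{2}$-dimensional space $A^{d\times d}(\mathbb{R})$ and then using an implicit-function/positivity perturbation argument around $A=0$.

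\textbf{Step 1 (spanning set with positive combination of zero).} For each pair $1\le i<j\le d$ take the standard elementary skew matrices $E_{ij}=e_j\otimes e_i-e_i\otimes e_j$; these form a basis. To represent matrices near $0$ with \emph{positive} squared coefficients, I need a family $\{\xi_2\otimes\xi_1-\xi_1\otimes\xi_2\}$ which contains both $E_{ij}$ and $-E_{ij}$ for every pair: $-E_{ij}$ is obtained by swapping the roles, i.e.\ taking $\xi_1=e_j,\xi_2=e_i$. For each such ordered pair choose $\xi$ to be another coordinate vector $e_l$, $l\notin\{i,j\}$ (possible since $d\ge 3$; here $d=5$), and complete $(\xi,\xi_1,\xi_2)$ to an orthonormal rational basis by the remaining coordinate vectors. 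To keep the directions $\xi$ distinct (which the Mikado support separation in Section~\ref{subsectionMikado} requires), I would instead rotate each frame by a distinct rational rotation of small angle (rational points on circles via Pythagorean triples) in a plane not affecting the needed property; since small rational rotations of the frame perturb $E_{ij}$ only slightly, the perturbed family $\{M_\xi\}$ still spans and still contains $0$ in the interior of its convex cone. This set is $\Lambda_B$, of cardinality $2\binom{d}{2}$.

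\textbf{Step 2 (representation at $A=0$).} Because the family is (close to) $\{\pm E_{ij}\}$, the zero matrix is a strictly positive combination: $0=\sum_\xi c_\xi M_\xi$ with all $c_\xi>0$, e.g.\ $c_\xi=1$ in the unperturbed case; after perturbation, $0$ lies in the interior of the cone, so such positive $c_\xi$ still exist (solve the linear system $\sum c_\xi M_\xi=0$ with $c$ near the all-ones vector, using that the map $c\mapsto\sum c_\xi M_\xi$ is onto, its kernel having dimension $\binom d2$).

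\textbf{Step 3 (smooth extension).} The linear map $L:\mathbb{R}^{\Lambda_B}\to A^{d\times d}$, $L(c)=\sum c_\xi M_\xi$, is surjective; fix a right inverse $R$ (e.g.\ the Moore--Penrose pseudoinverse). Define $c_\xi(A)=c^0_\xi+(RA)_\xi$, which is affine hence smooth, satisfies $L(c(A))=A$, and by continuity stays in $[\tfrac12\min c^0,2\max c^0]$ for $|A|<\varepsilon_0$ with $\varepsilon_0$ small. Setting $\gamma_\xi=\sqrt{c_\xi}$ gives smooth positive functions with $C^{-1}\le\gamma_\xi\le C$, proving (\ref{gamma jie}). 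The main (mild) obstacle is Step 1: ensuring the directions are rational, mutually distinct, and compatible with the same construction for $\Lambda_U$ in Lemma \ref{lemmaNS} (disjoint from $\Lambda_U$); I would handle it by choosing different rational rotation angles for all frames, noting spanning is an open condition.
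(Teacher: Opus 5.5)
Your argument is correct. The paper gives no proof of this lemma; it quotes it from the MHD convex integration literature, and the standard proof there uses your device: finitely many rational orthonormal frames whose matrices $\xi_2\otimes\xi_1-\xi_1\otimes\xi_2$ realize $\pm$ a basis of the skew-symmetric matrices, coefficients $c_\xi(A)$ affine in $A$ around a positive vector (via a right inverse of $c\mapsto\sum_\xi c_\xi(\xi_2\otimes\xi_1-\xi_1\otimes\xi_2)$), and $\gamma_\xi=\sqrt{c_\xi}$. For $d=5$ the perturbative part of your Steps 1--2 is unnecessary: a rational (Pythagorean) rotation of $\xi=e_l$ in a plane such as $\mathrm{span}\{e_l,e_m\}$ with $l,m\notin\{i,j\}$ leaves $\xi_1,\xi_2$ fixed, so it separates the directions while keeping each matrix exactly $\pm E_{ij}$ and allowing $c^0\equiv 1$.
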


\noindent
Let us emphasize that the universal constant $\varepsilon_0$ and $C$ can be same in Lemma \ref{lemmaNS} and Lemma \ref{lemmaMHD}.

We recall two types of antidivergence operators $\mathcal{R}^V$ and  $\mathcal{R}^B$ which are introduced in \cite{D1} and \cite{GeolemmaMHD} respectively. 
\begin{prop}\label{def of antidiv}
    There exist two linear operators $\mathcal{R}^V:C^\infty(\mathbb{T}^5;\mathbb{R}^d)\rightarrow C^\infty(\mathbb{T}^5;S^{d\times d})$ and $\mathcal{R}^B:C^\infty(\mathbb{T}^5;\mathbb{R}^d)\rightarrow C^\infty(\mathbb{T}^5;A^{d\times d})$ satisfying
    \begin{align*}
        \text{div}\,\mathcal{R}^Vf&=f-\int_{\mathbb{T}^5}f,\quad\forall f\in C^\infty(\mathbb{T}^5;\mathbb{R}^d),\\
        \text{div}\,\mathcal{R}^Bg&=g-\int_{\mathbb{T}^5}g,\quad\forall g\in C^\infty(\mathbb{T}^5;\mathbb{R}^d).
    \end{align*}
\end{prop}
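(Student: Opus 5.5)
The statement to prove is Proposition \ref{def of antidiv}: the existence of a symmetric antidivergence $\mathcal{R}^V$ and a skew-symmetric antidivergence $\mathcal{R}^B$ on $\mathbb{T}^5$. Both operators will be written as explicit Fourier multipliers, of order $-1$, acting on mean-free fields. After the construction we check the divergence identities directly.

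\textbf{Step 1: reduction to mean-free fields.} Given $f\in C^\infty(\mathbb{T}^5;\mathbb{R}^d)$, we first replace it by $f-\fint_{\mathbb{T}^5} f$. Every formula below then only involves nonzero frequencies, where $\Delta^{-1}$ is well defined and is a smooth multiplier of order $-2$.

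\textbf{Step 2: the symmetric operator $\mathcal{R}^V$.} Following \cite{D1}, define
\begin{align*}
\mathcal{R}^V f \triangleq \tfrac14\bigl(\nabla\mathcal{P}u+(\nabla\mathcal{P}u)^T\bigr)+\tfrac34\bigl(\nabla u+(\nabla u)^T\bigr)-\tfrac12(\mathrm{div}\,u)\mathrm{Id},
\end{align*}
where $u=\Delta^{-1}f$ and $\mathcal{P}$ is the Leray projector. Symmetry is evident from the formula. To compute the divergence we use three facts: $\Delta\mathcal{P}u=\mathcal{P}f$, $\mathrm{div}\,\mathcal{P}u=0$, and $\nabla\mathrm{div}\,u=f-\mathcal{P}f$ (the last one because $f-\mathcal{P}f=\nabla\Delta^{-1}\mathrm{div} f$). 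With these, the divergence becomes
\[
\tfrac14\mathcal{P}f+\tfrac34 f+\tfrac34(f-\mathcal{P}f)-\tfrac12(f-\mathcal{P}f)=f.
\]
The coefficients are chosen exactly so that this identity holds. Since I am not fully sure of the coefficients, I would verify the identity symbol-by-symbol at each frequency $q\neq0$ before fixing them.

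\textbf{Step 3: the skew-symmetric operator $\mathcal{R}^B$.} This case is simpler; it is the construction of \cite{GeolemmaMHD}. Set
\[
(\mathcal{R}^B g)_{ij}\triangleq \partial_j\Delta^{-1}g_i-\partial_i\Delta^{-1}g_j .
\]
This is skew by construction. Its divergence is
\[
\partial_j(\mathcal{R}^Bg)_{ij}=g_i-\partial_i\Delta^{-1}\mathrm{div}\, g .
\]
\textbf{This is the point I expect to be the main obstacle:} a skew-symmetric tensor always has divergence-free divergence, so the identity $\mathrm{div}\,\mathcal{R}^Bg=g-\fint g$ can only hold for divergence-free $g$. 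I would therefore restrict the statement to divergence-free, mean-free $g$, for which $\mathrm{div}\,\mathcal{R}^Bg=g$. This restriction costs nothing: in the paper $\mathcal{R}^B$ is only applied to $\mathrm{div}\mathcal{M}_{k,2}$ with $\mathcal{M}_{k,2}$ antisymmetric, and such fields are automatically divergence-free.

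\textbf{Step 4: mapping properties.} The estimates invoked as (\ref{RBestimate}) follow from the same formulas. The multipliers $\nabla\Delta^{-1}$ and $\nabla\Delta^{-1}\mathcal{P}$ are homogeneous of degree $-1$ and smooth away from zero frequency, so:
\begin{itemize}
\item $\nabla\mathcal{R}^V$ and $\nabla\mathcal{R}^B$ are Calderón–Zygmund operators, hence bounded on $C^\kappa$;
\item $\mathcal{R}^{V},\mathcal{R}^B$ gain one derivative on oscillatory inputs $a\,e^{iM q\cdot x}$; the standard stationary-phase/commutator expansion, as in \cite{D1}, gives a bound of the form $M^{-1+\beta}\|a\|_{C^\beta}+M^{-m+\beta}\|a\|_{C^{m+\beta}}$.
\end{itemize}
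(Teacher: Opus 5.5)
Your proposal is correct and matches the paper, which gives no argument beyond citing the symmetric antidivergence of \cite{D1} and the skew-symmetric one of \cite{GeolemmaMHD}. Your explicit multipliers are the $d$-dimensional versions of those operators, and your coefficient check for $\mathcal{R}^V$ holds exactly in any dimension; only the trace-free property of the original 3D operator is lost, and the statement does not require it.

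Your objection to the $\mathcal{R}^B$ identity is also right. Since $\partial_i\partial_jA_{ij}=0$ for every antisymmetric $A$, the identity fails whenever $\mathrm{div}\,g\neq0$, so the statement must be restricted to divergence-free $g$. This restriction is harmless. Your formula still defines $\mathcal{R}^B$ linearly on all mean-free fields, which is all the estimates use, and the identity itself is only invoked for the solenoidal fields $\mathrm{div}\,\mathcal{M}_{k,2}$ and $\mathrm{div}\,\mathcal{M}_{0,1}$.
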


\begin{lemm}
    Let $\beta>0$, for any smooth function $f$ and $m>0$, we have
    \begin{align}
        \|\mathcal{R}^V(f(x)e^{i\lambda kx})\|_{C^{\beta}}\lesssim_{\beta,m}\lambda^{-1+\beta}\|f\|_{L^\infty}+ \lambda^{-m+\beta}\|\nabla^mf\|_{L^\infty}+\lambda^{-m}\|\nabla^mf\|_{C^{\beta}},\\
         \|\mathcal{R}^B(f(x)e^{i\lambda kx})\|_{C^{\beta}}\lesssim_{\beta,m}\lambda^{-1+\beta}\|f\|_{L^\infty}+ \lambda^{-m+\beta}\|\nabla^mf\|_{L^\infty}+\lambda^{-m}\|\nabla^mf\|_{C^{\beta}}.\label{RBestimate}
    \end{align}
\end{lemm}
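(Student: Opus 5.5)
The plan is to prove the two estimates by a standard stationary-phase argument on the torus, since $\lambda k$ is a large frequency and $f$ varies slowly. I treat $\mathcal{R}^V$ and $\mathcal{R}^B$ together: both are built from $\Delta^{-1}$, $\nabla$, $\mathrm{div}$ and constant-coefficient combinations of these (for $\mathcal{R}^V$ the operator from \cite{D1}, for $\mathcal{R}^B$ the skew operator $(\mathcal{R}^Bg)_{ij}=\varepsilon_{ijk}(\mathrm{curl}\,\Delta^{-1}g)_k$-type formula from \cite{GeolemmaMHD}). Hence each is a Fourier multiplier homogeneous of degree $-1$, smooth away from the origin. Two standard consequences follow. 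First, $\mathcal{R}$ is bounded from $C^\beta$ to $C^{1+\beta}$ (Schauder) and from $L^\infty$ to $C^\beta$ for $\beta<1$. Second, on functions with Fourier support in the annulus $|q|\sim\lambda$ it gains a full factor $\lambda^{-1}$ in $C^\beta$ norms, up to the harmless $\lambda^\beta$ loss coming from measuring in $C^\beta$ rather than $L^\infty$.

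I then decompose $f=P_{\le\lambda/2}f+P_{>\lambda/2}f$ using a Littlewood--Paley cutoff. For the low part, $P_{\le\lambda/2}f\,e^{i\lambda k\cdot x}$ has frequencies in $\{|q-\lambda k|\le\lambda/2\}$, which lies in an annulus of size $\sim\lambda$ (recall $|k|\ge 1$). On that annulus $\mathcal{R}$ coincides with a localized multiplier $m_\lambda$ whose kernel has $L^1$ norm $\lesssim\lambda^{-1}$. This gives the bound $\lambda^{-1}\|P_{\le\lambda/2}f\|_{L^\infty}$ in $L^\infty$. Since the function is frequency-localized at scale $\lambda$, the $C^\beta$ seminorm costs an extra $\lambda^{\beta}$, yielding the term $\lambda^{-1+\beta}\|f\|_{L^\infty}$ (using $\|P_{\le}f\|_{L^\infty}\lesssim\|f\|_{L^\infty}$).

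For the high part $g=P_{>\lambda/2}f$, Bernstein gives $\|g\|_{L^\infty}\lesssim\lambda^{-m}\|\nabla^m f\|_{L^\infty}$ and $\|g\|_{C^\beta}\lesssim\lambda^{-m}\|\nabla^m f\|_{C^\beta}$. The product $g e^{i\lambda k\cdot x}$ has $C^\beta$ norm bounded by $\lambda^\beta\|g\|_{L^\infty}+\|g\|_{C^\beta}$. Applying the $C^\beta\to C^{1+\beta}\subset C^\beta$ boundedness of $\mathcal{R}$ then produces exactly $\lambda^{-m+\beta}\|\nabla^mf\|_{L^\infty}+\lambda^{-m}\|\nabla^mf\|_{C^\beta}$.

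The main technical point is the annulus estimate for the low part, namely that the localized kernel of a degree $-1$ multiplier has $L^1$ norm $O(\lambda^{-1})$ uniformly in the direction $k$ and on the torus. I would handle this by scaling and by periodizing a Schwartz kernel on $\mathbb{R}^5$, which is legitimate because $\lambda k$ has integer entries. The mean-zero correction in Proposition~\ref{def of antidiv} is harmless here, since $fe^{i\lambda k\cdot x}$ has mean bounded by the high-frequency content of $f$, and that content is absorbed into the $m$-term.
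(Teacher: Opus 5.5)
The paper does not prove this lemma. It is quoted in the appendix as a standard convex-integration tool, and it is only used, in the estimate of $G^{(1)}_h$, with $\beta\in(0,\frac12)$. Your Littlewood--Paley argument is correct, and it takes a genuinely different route from the usual one.

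The standard proof integrates by parts in the phase. Iterating the identity
\[
g\,e^{i\lambda k\cdot x}=\mathrm{div}\Big(\frac{-ik}{\lambda|k|^2}\,g\,e^{i\lambda k\cdot x}\Big)+\frac{ik\cdot\nabla g}{\lambda|k|^2}\,e^{i\lambda k\cdot x}
\]
$m$ times writes $fe^{i\lambda k\cdot x}$ as the divergence of terms of size $\lambda^{-1-j}(\nabla^j f)e^{i\lambda k\cdot x}$, $0\le j<m$, plus a remainder $\lambda^{-m}(\nabla^m f)e^{i\lambda k\cdot x}$. One then applies $C^\beta$ Schauder bounds for the order-zero operator $\mathcal{R}\,\mathrm{div}$ and for $\mathcal{R}$. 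H\"older interpolation absorbs the intermediate terms $\lambda^{-1-j}(\lambda^\beta\|\nabla^jf\|_{L^\infty}+[\nabla^jf]_\beta)$ into the two extreme ones.

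In your version the low-frequency part of $f$ is handled exactly, with no expansion and no interpolation. After the cutoff the product is frequency-localized where $\mathcal{R}$ is a smooth multiplier of size $\lambda^{-1}$. The last two terms on the right-hand side then come directly from Bernstein applied to $P_{>\lambda/2}f$.

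Each approach has its cost. Yours needs Littlewood--Paley machinery on $\mathbb{T}^5$ and relies on the phase being linear, since multiplication by $e^{i\lambda k\cdot x}$ must be a pure frequency shift. The integration-by-parts proof is elementary and carries over to oscillations $e^{i\lambda\Phi(x)}$ with non-degenerate $\nabla\Phi$.

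Two small points to tighten.

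First, your argument as written needs $\beta\in(0,1)$. You use Schauder for $\mathcal{R}$ and the bound $[g\,e^{i\lambda k\cdot x}]_\beta\lesssim\lambda^\beta\|g\|_{L^\infty}+[g]_\beta$. This covers the paper's use, but not literally the stated range $\beta>0$.

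Second, for general $k\in\mathbb{Z}^5\setminus\{0\}$ the natural scale is $\lambda|k|$, not $\lambda$. The low part lives in an annulus of radius $\sim\lambda|k|$, so the kernel gain and the $C^\beta$ loss must both be taken at that scale. Your high-part bound also picks up a factor $|k|^\beta$ from $[e^{i\lambda k\cdot x}]_\beta$. The paper applies the lemma with the frequency vector $q$ running over all of $\mathbb{Z}^5\setminus\{0\}$ and then sums in $q$, so you want constants independent of $k$. You can get this in either of two ways:
\begin{itemize}
\item cut $f$ at frequency $\lambda|k|/2$ instead of $\lambda/2$; every resulting power of $\lambda|k|$ is then negative and can be replaced by the same power of $\lambda$;
\item bound the high part through $\mathcal{R}\colon L^\infty\to C^\beta$.
\end{itemize}
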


Finally, we recall the improved H\"{o}lder's inequality from \cite[Lemma 2.1]{modena} (see also \cite[Lemma3.7]{ns有限能量不唯一}).
\begin{lemm}\label{improved holder}
    Let $p\in[1,\infty]$ and $f,g:\mathbb{T}^5\rightarrow\mathbb{R}$ be smooth functions. Then for any $\sigma\in\mathbb{N}$, we have
    \begin{gather*}
        \|g\cdot f(\sigma\cdot)\|_{L^p}\lesssim\|g\|_{L^p}\|f\|_{L^p}+\sigma^{-\frac{1}{p}}\|g\|_{C^1}\|f\|_{L^p}.
    \end{gather*}
\end{lemm}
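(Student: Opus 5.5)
The approach is the standard one of \cite[Lemma 2.1]{modena}: split $\mathbb{T}^5$ into the cells on which $f(\sigma\cdot)$ is exactly periodic, freeze $g$ on each cell, and pay for the oscillation of $g$ with one derivative. We normalize $L^p$ norms on $\mathbb{T}^5$ with averaged integrals (otherwise only harmless constants change).

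The case $p=\infty$ is immediate, since $\|g f(\sigma\cdot)\|_{L^\infty}\le\|g\|_{L^\infty}\|f\|_{L^\infty}$. So fix $p\in[1,\infty)$.

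\emph{Step 1 (decomposition).} Since $\sigma\in\mathbb{N}$, we partition $\mathbb{T}^5=[0,2\pi)^5$ into $\sigma^5$ disjoint cubes $Q_j$ of side $2\pi/\sigma$. The function $f(\sigma\cdot)$ restricted to each $Q_j$ is a rescaled copy of $f$ on a full period. By the change of variables $y=\sigma x$,
\begin{equation*}
\int_{Q_j}|f(\sigma x)|^p\,dx=\frac{|Q_j|}{|\mathbb{T}^5|}\int_{\mathbb{T}^5}|f|^p .
\end{equation*}

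\emph{Step 2 (freezing $g$).} We bound $|g|^p$ on each cube by its supremum there:
\begin{equation*}
\int_{\mathbb{T}^5}|g|^p|f(\sigma x)|^p\,dx\le\sum_j\sup_{Q_j}|g|^p\int_{Q_j}|f(\sigma x)|^p\,dx=\|f\|_{L^p}^p\sum_j|Q_j|\sup_{Q_j}|g|^p ,
\end{equation*}
where the last equality uses Step 1 and the averaged normalization of $\|f\|_{L^p}$.

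\emph{Step 3 (oscillation of $|g|^p$).} On each $Q_j$ we use the elementary bound
\begin{equation*}
\sup_{Q_j}|g|^p\le\fint_{Q_j}|g|^p+\operatorname{osc}_{Q_j}|g|^p .
\end{equation*}
By the mean value theorem applied to $s\mapsto s^p$ and $\operatorname{diam}Q_j\lesssim\sigma^{-1}$,
\begin{equation*}
\operatorname{osc}_{Q_j}|g|^p\le p\|g\|_{L^\infty}^{p-1}\|\nabla g\|_{L^\infty}\operatorname{diam}Q_j\lesssim\sigma^{-1}\|g\|_{C^1}^p .
\end{equation*}
Multiplying by $|Q_j|$ and summing over $j$ gives
\begin{equation*}
\sum_j|Q_j|\sup_{Q_j}|g|^p\lesssim\|g\|_{L^p}^p+\sigma^{-1}\|g\|_{C^1}^p .
\end{equation*}

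\emph{Step 4 (conclusion).} Combining Steps 2 and 3,
\begin{equation*}
\|g f(\sigma\cdot)\|_{L^p}^p\lesssim\|f\|_{L^p}^p\big(\|g\|_{L^p}^p+\sigma^{-1}\|g\|_{C^1}^p\big).
\end{equation*}
Taking $p$-th roots and using $(a+b)^{1/p}\le a^{1/p}+b^{1/p}$ yields the claimed inequality, with the gain $\sigma^{-1/p}$ on the second term.

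There is no real obstacle. The only point needing care is Step 1, which requires $\sigma$ to be an integer so that each $Q_j$ contains exactly one full period of $f(\sigma\cdot)$. This is also why the estimate is stated only for $\sigma\in\mathbb{N}$.
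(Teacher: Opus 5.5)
Your argument is correct and is essentially the standard proof of \cite[Lemma 2.1]{modena}: you use cells of side $2\pi/\sigma$ on which $f(\sigma\cdot)$ completes exactly one period, freeze the slow factor $g$ on each cell, and pay $\sigma^{-1}\|g\|_{C^1}^p$ for its oscillation. The paper invokes exactly that result here without a proof of its own, and the only point worth recording is that your implicit constant depends on $p$ (through the factor $p$ in the mean value bound), which is harmless since $p$ is fixed.
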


\section{Existence for the perturbed MHD systems}
\begin{prop} \label{lemmaExistence}
    Fix $t_0 \in \mathbb{R}$. Let $(v,b) \in (C^{\infty}([t_0,\infty)\times T^5;\mathbb{R}^5))^2$ and $(\omega^0,\rho^0)\in (C^{\infty}(\mathbb{T}^5;\mathbb{R}^5))^2$ be divergence-free vector fields. The there exists a solution $(w,\rho) \in (C^{\infty}([t_0,\infty)\times \mathbb{T}^5;\mathbb{R}^5))^2$ of 
\begin{equation}\label{B.1}
    \begin{cases}
        \partial_t \omega-\Delta \omega+\mathrm{div}(v\otimes \omega+\omega\otimes v-b\otimes \rho-\rho\otimes b) +\nabla p=0 ,\\
        \partial_t \rho-\Delta \rho+\mathrm{div}(v\otimes \rho-\rho\otimes v+\omega\otimes b-b\otimes \omega)=0,\\
        \mathrm{div}\, \omega=\mathrm{div}\, \rho =0,\\
        \omega(t_0,x)=\omega^0(x),\quad \rho(t_0,x)=\rho^0(x).
    \end{cases}
\end{equation}
\end{prop}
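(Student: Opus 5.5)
Since the system (\ref{B.1}) is linear in $(\omega,\rho)$ with smooth coefficients $(v,b)$, I will prove existence by a Galerkin (Fourier truncation) scheme combined with energy estimates at every Sobolev level, and then pass to the limit. It suffices to work on an arbitrary finite interval $[t_0,t_0+T]$, because solutions on nested intervals are unique, as shown below, and can therefore be glued together. Let $\Pi_n$ be the projection onto Fourier modes $|q|\le n$, composed with the Leray projection $\mathbb{P}$ for the $\omega$-equation. I define $(\omega_n,\rho_n)$ as the solution of the finite-dimensional linear ODE obtained by applying $\Pi_n$ to (\ref{B.1}) (with $\mathbb{P}$ removing the pressure), and I take initial data $(\Pi_n\omega^0,\Pi_n\rho^0)$. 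Since the coefficients are continuous in $t$, this linear ODE has a unique global solution. Moreover, $\Pi_n$ preserves divergence-free fields, so $\mathrm{div}\,\omega_n=\mathrm{div}\,\rho_n=0$.

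The key step is a uniform-in-$n$ bound in $H^s$ for every $s\in\mathbb{N}$. I pair the equations with $(-\Delta)^s\omega_n$ and $(-\Delta)^s\rho_n$ and sum. Since $\Pi_n$ is self-adjoint and commutes with derivatives, it drops out of the pairing. The nonlinear-looking terms are bilinear in the unknown $(\omega_n,\rho_n)$ and a smooth coefficient; for example,
\begin{gather*}
\left|\langle \nabla^s\mathrm{div}(v\otimes\omega_n),\nabla^s\omega_n\rangle\right|\le \|\nabla^{s}(v\otimes\omega_n)\|_{L^2}\|\nabla^{s+1}\omega_n\|_{L^2}\le \tfrac14\|\nabla^{s+1}\omega_n\|_{L^2}^2+C_s\|v\|_{C^{s}}^2\|\omega_n\|_{H^s}^2 .
\end{gather*}
Here I simply move the divergence onto the test function and absorb it into the dissipation; no cancellation structure of MHD is needed. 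This yields
\begin{gather*}
\frac{d}{dt}\|(\omega_n,\rho_n)\|_{H^s}^2+\|(\omega_n,\rho_n)\|_{H^{s+1}}^2\le C_s(1+\|(v,b)\|_{C^s([t_0,t_0+T]\times\mathbb{T}^5)}^2)\|(\omega_n,\rho_n)\|_{H^s}^2 .
\end{gather*}
Gronwall's inequality then gives bounds uniform in $n$ in $L^\infty_tH^s\cap L^2_tH^{s+1}$. Using the equation, I also get bounds on $\partial_t^j$ of $(\omega_n,\rho_n)$ in $L^\infty_tH^{s}$ for every $j$ and $s$.

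Next I pass to the limit. The uniform bounds on all space-time derivatives allow me to apply Arzel\`a--Ascoli to a subsequence, which converges in $C^m([t_0,t_0+T]\times\mathbb{T}^5)$ for every $m$ to some smooth $(\omega,\rho)$. The equations are linear, so passing to the limit is immediate. The limit $(\omega,\rho)$ is divergence-free, satisfies the equation in the form $\partial_t\omega-\Delta\omega+\mathbb{P}\mathrm{div}(\cdots)=0$, and satisfies the $\rho$-equation exactly. To recover the pressure I set $\nabla p=-(\mathrm{Id}-\mathbb{P})\mathrm{div}(v\otimes\omega+\omega\otimes v-b\otimes\rho-\rho\otimes b)$, which is a smooth gradient. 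The $\rho$-equation requires no projection, because $\mathrm{div}\,\mathrm{div}$ of an antisymmetric tensor vanishes, so divergence-freeness of $\rho$ is preserved. The initial data are attained because $\Pi_n\omega^0\to\omega^0$ in every $H^s$. Uniqueness, which I need for gluing, follows from the $s=0$ energy estimate applied to the difference of two solutions.

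I expect the main obstacle to be only bookkeeping: keeping the $H^s$ energy estimate closed using the full dissipation, and checking that $\Pi_n$ commutes with $\mathbb{P}$ so that the approximate solutions stay divergence-free. The coefficients are assumed smooth on $[t_0,\infty)$, so smoothness up to $t=t_0$ is not an issue.
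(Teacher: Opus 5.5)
Your argument is correct, and it takes a different route from the paper's.

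\textbf{The paper's route.} The paper runs Galerkin in an abstract divergence-free basis and closes only the $L^2$ energy estimate. This uses Peter--Paul plus Gr\"onwall and needs just $\|(v,b)\|_{L^2_tL^\infty_x}$. It then extracts a weak-$*$ limit in $L^\infty_tL^2_x\cap L^2_tH^1_x$. It uses $\partial_t(\omega,\rho)\in L^2_tH^{-1}_x$ to get continuity in $L^2$ and attainment of the initial data. Only after that does it upgrade the weak solution to a smooth one, by writing it in Duhamel form and bootstrapping with heat-kernel smoothing.

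\textbf{Your route.} You choose the Fourier truncation $\Pi_n$, which commutes with derivatives and with $\mathbb{P}$, so the energy estimate closes at every $H^s$ level uniformly in $n$. Combined with the time-derivative bounds read off from the equation, this gives compactness in every $C^m([t_0,t_0+T]\times\mathbb{T}^5)$. The smooth solution, including smoothness up to $t=t_0$, then comes directly out of the limit. There is no weak-solution stage and no separate regularity bootstrap.

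\textbf{What each buys.} Your bounds use $\|(v,b)\|_{C^s}$ at every level, whereas the paper's existence step works for much rougher coefficients and treats regularity as a separate issue. That regularity step is only sketched in the paper. Since $(v,b)$ is smooth here, nothing is lost by your approach. Your version also handles explicitly three points the paper leaves implicit: recovery of the pressure, preservation of $\mathrm{div}\,\rho=0$ through $\mathrm{div}\,\mathrm{div}$ of an antisymmetric tensor, and uniqueness.

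For the gluing you could also avoid uniqueness altogether: your Galerkin approximations are global in time, so a diagonal subsequence over $T=1,2,3,\dots$ suffices.
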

\begin{proof}
    It is suffice to show that the proposition on the finite interval $[t_0,T]$ for all $T>0$. We begin by proving the existence of a weak solution in $(L^{\infty}_tL^2_x\cap L^2_tH^1_x([t_0,T]\times \mathbb{T}^5;\mathbb{R}^5))^2$ by the Galerkin method as in \cite[Theorem 3.5]{tsai2018lectures}. Fix $\{(\phi_j,\psi_j)\}^{\infty}_{j=1} \subset C^{\infty}(\mathbb{T}^5;\mathbb{R}^5)$, a divergence-free orthonormal basis of $(H^1(\mathbb{T}^5;\mathbb{R}^5))^2$. Consider the finite rank approximation of $(\omega_0,\rho_0)$ in this basis:
    \begin{align*}
        \omega^0_m=\sum^m_{j=1} \phi_j(x)(\phi_j,\omega^0)_{L^2},\quad \rho^0_m=\sum^m_{j=1} \psi_j(x)(\psi_j,\rho^0)_{L^2} .
    \end{align*}
    We search for a solution of \eqref{B.1} in this basis, namely a vector field of the form
    \begin{align}\label{B.2}
        \begin{cases}
            \omega_m(x,t)=\sum^m_{j=1} g_{j,1}(t)\phi_j(x),\\
            \rho_m(x,t)=\sum^m_{j=1} g_{j,2}(t)\psi_j(x).
        \end{cases}
    \end{align}
   and desire to solve a system of ODE's for the coefficient functions $(g_{j,1}(t),g_{j,2}(t))$ with initial data given by 
   \begin{align*}
       \begin{cases}
           g_{j,1}(t)=(\phi_j,\omega^0),\\
           g_{j,2}(t)=(\psi_j,\rho^0).
       \end{cases}
   \end{align*}
   We then choose the coefficients in the \eqref{B.2} so that $(w_m,\rho_m)$ obeys \eqref{B.1}. Of course this is not possible because the partial basis in not closed under the operations in the equation; instead, we solve \eqref{B.1} projected onto the subspace. Namely,
   \begin{align*}
       (\phi_i,\sum_{j=1}^m (g_{j,1}'\phi_j-g_{j,1}\Delta\phi_j+g_{j,1}\mathrm{div}(v\otimes \phi_j+\phi_j \otimes v )-g_{j,2}\mathrm{div}(b\otimes \psi_j+ \psi_j\otimes b)))_{L^2}=0,\\
       (\psi_i, \sum_{j=1}^m (g_{j,2}'\psi_j-g_{j,2}\Delta\psi_j+g_{j,1}\mathrm{div}(\phi_j\otimes b-b \otimes \phi_j)+g_{j,2}\mathrm{div}(v\otimes \psi_j- \psi_j\otimes v)) )_{L^2}=0,
   \end{align*}
which leads to the ODEs system\\
\begin{equation*}
    \begin{cases}
        g_{i,1}'+a^1_{ij}(t)g_{j,1}-a^2_{ij}(t)g_{j,2}=0,\\
    g_{i,2}'+b_{ij}(t)g_{j,1}+b^2_{ij}(t)g_{j,2}=0,\\
    g_{j,1}(t)=(\phi_j,\omega^0),\quad g_{j,2}(t)=(\psi_j,\rho^0),
    \end{cases}
\end{equation*}
   for $i=1,2,...,m,$ where\\
   \begin{equation*}
       \begin{cases}
              a^1_{ij}(t)=(\phi_i,-\Delta\phi_j+\mathrm{div}(v\otimes \phi_j+\phi_j\otimes v)) ,\\
       a^1_{ij}(t)=(\phi_i,\mathrm{div}(b\otimes \psi_j+\psi_j\otimes b)) ,\\
       b^1_{ij}(t)=(\psi_i,\mathrm{div}(\phi_j\otimes b-b\otimes \phi_j)) ,\\
       b^2_{ij}(t)=(\psi_i,-\Delta\psi_j+\mathrm{div}(v\otimes \psi_j-\psi_j\otimes v )).
   \end{cases}
   \end{equation*}
   Because we have $(v,b) \in (L^1([t_0,\infty);C^1(\mathbb{T})))^2$, the norm of $(a^1_{ij},b^1_{ij})$ and $(a^2_{ij},b^2_{ij})$ give a unique global solution $(g_{1,1},g_{1,2}),...,(g_{m,1},g_{m,2})$ to \eqref{B.1}. Having constructed the coefficients, we can build the function $(w_m,\rho_m)$ as defined in \eqref{B.2}. By construction, $(w_m,b_m)$ obeys \eqref{B.1} projected onto any of the $(\phi_1,\psi_1),...,(\phi_m,\psi_m)$. In particular we may test against $(w_m,\rho_m)$ to obtain
   \begin{align*}
\frac{1}{2}\int_{\mathbb{T}^5} |\omega_m(t)|^2dx + \int^t_{t_0}\int_{\mathbb{T}^5}|\nabla \omega_m|^2dxdt
+\int_{t_0}^t\int_{\mathbb{T}^5} \mathrm{div}\bigl(v\otimes \omega+\omega\otimes v-b\otimes \rho-\rho\otimes b\bigr)dx = \frac{1}{2}\int_{\mathbb{T}^5}|\omega_m^0|^2dx, \\
\frac{1}{2}\int_{\mathbb{T}^5} |\rho_m(t)|^2dx + \int^t_{t_0}\int_{\mathbb{T}^5}|\nabla \rho_m|^2dxdt
+\int_{t_0}^t\int_{\mathbb{T}^5} \mathrm{div}\bigl(v\otimes \rho-\rho\otimes v+\omega\otimes b-b\otimes \omega\bigr)dx = \frac{1}{2}\int_{\mathbb{T}^5}|\rho_m^0|^2dx.
\end{align*}
   which use that $v,b,\omega_m$ and$\rho_m$ are divergence-free. We estimate the last term using the Peter-Paul inequality, we then conclude that
   \begin{align*}
   &\quad\int_{\mathbb{T}^5} |\omega_m(t)|^2+|\rho_m(t)|^2dx+\int^t_{t_0}\int_{\mathbb{T}^5}|\nabla \omega_m|^2+|\nabla\rho_m|^2 dxdt\\& \lesssim \int_{\mathbb{T}^5}|\omega^0_m|^2 +|\rho_m^0|^2 dx+\int^t_{t_0} (\|\omega_m\|^2_{L^2}+\|\rho_m\|_{L^2}^2)(\|v\|^2_{L^{\infty}}+\|b\|_{L^{\infty}}^2)dx.
   \end{align*}
   
   By Gr$\ddot{\mathrm{o}}$nwall's inequality, we get that $(\omega_m,\rho_m)$ is a priori bounded in the energy space, uniformly in $m$. Hence, there exists a limiting function $(\omega_{\infty},\rho_{\infty})(x,t)$ such that $(\omega_m,\rho_m)$ converges weak-${*}$ to $(w_{\infty},\rho_{\infty})(x,t)$ in $L^{\infty}_tL^2_x$ and converges weakly to $(\omega_{\infty},\rho_{\infty})(x,t)$ in $L^2_tL^2_x$. Moreover $(\omega_{\infty},\rho_{\infty})(x,t)$ satisfies the same priori bound as the $(\omega_m,\rho_m)$. To show that $(\omega_{\infty},\rho_{\infty})$ is a weak solution of \eqref{B.1}, it is suffice to test aganist function of the form $\bar{\phi}:=\theta_1(t)\phi_j(x)$ and $\bar{\psi}:=\theta_2(t)\psi(t)$. Hence, for all $m\ge j$ we have
   \begin{align*}
       \int^T_{t_0}\int_{\mathbb{T}^d}-\omega_m\cdot\partial_t\bar{\phi}-\omega_m\cdot\Delta\bar{\phi}-\frac{1}{2}(v\otimes \omega+\omega\otimes v-b\otimes \rho-\rho\otimes b):\nabla\bar{\phi}dx=0,\\
       \int^T_{t_0}\int_{\mathbb{T}^d}-\rho_m\cdot\partial_t\bar{\psi}-\rho_m\cdot\Delta\bar{\psi}-\frac{1}{2}(v\otimes \rho-\rho\otimes v+\omega\otimes b-b\otimes \omega):\nabla\bar{\psi}dx=0.
   \end{align*}
   Consider the weak convergence of $(\omega_m,b_m)$, we can pass to the limit and conclude $(\omega_{\infty},\rho_{\infty})(x,t)$ Satisyfies \eqref{B.1}. From the energy bounds on $(\omega_m,\rho_m)(x,t)=(\omega_{\infty},\rho_{\infty})(x,t)$, we obain that $(\partial_t\omega_{\infty},\partial_t\rho_{\infty})\in (L^2_tH^{-1}_x)^2$ exists in the weak sense, so we have $(\omega_{\infty},\rho_{\infty})\in (C([t_0,T],L^2_x))^2$ and convergence to the initial data $(\omega^0,\rho^0)$.

   Finally, we upgrade this weak solution to a strong one. One approach is to consider $(w,\rho)$ as a solution of the following system
   \begin{equation*}
   \begin{cases}
              \partial_t\omega-\Delta \omega+\nabla p=\mathrm{div}\,f,\\
       \partial_t\rho-\Delta \rho=\mathrm{div}\,g,\\
       \mathrm{div}\,\omega=\mathrm{div}\,\rho=0,\\
       \omega(t_0,x)=\omega^0(x),\quad \rho(t_0,x)=\rho^0(x),
   \end{cases}
   \end{equation*}
   where $f=\mathrm{div}(v\otimes \omega+\omega\otimes v-b\otimes \rho-\rho\otimes b) \in L^{\infty}_tL^2_x$ and $g=\mathrm{div}(v\otimes \rho-\rho\otimes v+\omega\otimes b-b\otimes \omega) \in L^{\infty}_tL^2_x$. The weak solution can be expressed as follows
   \begin{align*}
       &w(t)=e^{t\Delta}w^0+\int_0^t e^{(t-t')\Delta}\mathbb{P}\mathrm{div}\,f(s)ds,
       \\&\rho(t)=e^{t\Delta}\rho^0+\int_0^t e^{(t-t')\Delta}\mathbb{P}\mathrm{div}\,g(s)ds,
   \end{align*}
   From this formula, we can iteratively bootstrap regularity using the smoothing effect of the heat kernel.
\end{proof}

\section{Gr\"{o}nwall inequality and heat esitmate}
We recall the fractional Gr\"{o}nwall inequality from \cite[Lemma B.3]{Coiculescu2025}.
\begin{lemm}\label{B.3}
    If $a(t)$, $f(t)$, $g_1(t)$, and $g_2(t)$ are positive continuous functions on $[t_0,\infty)$
with $a(t)$ non-decreasing and
\[
f(t) \le a(t) + \int_{t_0}^t \bigl(g_1(s) + (t-s)^{-\frac{1}{2}}g_2(s)\bigr)f(s)\,ds \quad \forall t \ge t_0,
\]
then for any $p > 2$,
\[
f(t) \lesssim_p a(t)\exp\biggl(O_p\Bigl(\int_{t_0}^t g_1(s)\,ds + \|s^{\frac{1}{2}}g_2\|_{L^\infty([t_0,t])}^{p-2}\int_{t_0}^t g_2(s)^2\,ds\Bigr)\biggr)
\quad \forall t \ge t_0.
\]
\end{lemm}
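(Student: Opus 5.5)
The plan is to prove the fractional Gr\"onwall inequality of Lemma \ref{B.3} by removing the singular kernel with one application of H\"older's inequality, and then closing with the classical Gr\"onwall lemma applied to a power of $f$. Fix $p>2$ and let $q=p/(p-1)$ be the conjugate exponent, so $q<2$ and $(t-s)^{-q/2}$ is integrable near $s=t$.

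\textbf{Step 1 (reduction to the worst case).} Since $a$ is non-decreasing, it suffices to prove the bound at a fixed final time $T$ for all $t\in[t_0,T]$ with $a(t)$ replaced by $a(T)$. Dividing by $a(T)$, we may assume $a\equiv1$ on $[t_0,T]$. We may also assume $g_2\ge0$, which holds by positivity.

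\textbf{Step 2 (H\"older on the singular term).} Write
\[
(t-s)^{-\frac12}g_2(s)f(s)=(t-s)^{-\frac12}\,s^{\frac12-\frac1p}\,\bigl(s^{-\frac12+\frac1p}g_2(s)f(s)\bigr).
\]
Applying H\"older with exponents $q$ and $p$ gives
\[
\int_{t_0}^t(t-s)^{-\frac12}g_2f\,ds\le\Bigl(\int_{t_0}^t(t-s)^{-\frac q2}s^{q(\frac12-\frac1p)}ds\Bigr)^{\frac1q}\Bigl(\int_{t_0}^t s^{-\frac p2+1}g_2^pf^p\,ds\Bigr)^{\frac1p}.
\]
By scaling $s=t\sigma$, the first factor is $\lesssim_p t^{(1-\frac q2)/q+\frac12-\frac1p}=t^{\frac1q-\frac1p}\cdot t^{0}$. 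I expect the exponent arithmetic to be the delicate point. The plan is to split the weights so that the kernel factor is $O_p(1)$ uniformly in $t$. Then use
\[
s^{-\frac p2+1}g_2^p=\bigl(s^{\frac12}g_2\bigr)^{p-2}g_2^{2}\,s^{0}\le\|s^{\frac12}g_2\|_{L^\infty}^{p-2}g_2^2,
\]
where the $s$-powers are chosen precisely so they cancel. If the powers do not match exactly, I will redistribute the $s$-weights between the two H\"older factors until the kernel integral is $O_p(1)$ after scaling and the second factor is exactly $K(s)f(s)^p$ with $K:=\|s^{1/2}g_2\|_\infty^{p-2}g_2^2$.

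\textbf{Step 3 (closing).} We now have $f(t)\le1+\int g_1f+C_p\bigl(\int_{t_0}^tK f^p\bigr)^{1/p}$. Raising to the $p$-th power and applying H\"older to $\int g_1 f$ in the form $(\int g_1f)^p\le(\int g_1)^{p-1}\int g_1f^p$ gives
\[
f^p\lesssim_p 1+\Bigl(\int g_1\Bigr)^{p-1}\int g_1f^p+\int Kf^p.
\]
The factor $(\int g_1)^{p-1}$ is unpleasant. The cleaner route is to first absorb the $g_1$ term by the substitution $F(t)=f(t)\exp(-\int_{t_0}^tg_1)$, which satisfies the same inequality without $g_1$ because the exponential weight is monotone. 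Then classical Gr\"onwall applied to $F^p\lesssim 1+\int KF^p$ gives $F^p\lesssim\exp(O_p(\int K))$. Taking $p$-th roots and undoing the substitution yields the claimed bound.

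\textbf{Main obstacle.} The main obstacle is the weight bookkeeping in Step 2, where the powers of $s$ must cancel so that the result is scale-invariant. The monotone-weight absorption in Step 3 also needs a careful check that the singular term stays controlled after multiplying by $e^{-\int g_1}$. This works since $\exp(-\int_{t_0}^t g_1)\le\exp(-\int_{t_0}^sg_1)$ for $s\le t$.
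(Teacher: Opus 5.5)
The paper does not prove this lemma; it quotes it from Coiculescu--Palasek, so I am judging your outline on its own. It has a genuine gap in Step 3. Set $G(t)=\int_{t_0}^t g_1$ and $F=fe^{-G}$. Monotonicity of the weight only gives $e^{-G(t)}f(s)\le F(s)$ for $s\le t$, so what you actually obtain is
\[
F(t)\le 1+\int_{t_0}^t g_1(s)e^{-(G(t)-G(s))}F(s)\,ds+\int_{t_0}^t(t-s)^{-\frac12}g_2(s)F(s)\,ds ,
\]
and the $g_1$-term does not disappear. It also cannot be dropped, even up to constants. In the equality case one finds $F(t)=1+e^{-G(t)}S(t)+\int_{t_0}^t g_1(s)e^{-G(s)}S(s)\,ds$, where $S(t)=\int_{t_0}^t(t-s)^{-\frac12}g_2f\,ds$. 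Take $g_2$ concentrated just before a time $t_1$ and $g_1$ just after it. Then the last term keeps the large values of $S$ near $t_1$, while the singular integral of $F$ at a later time $t$ has already decayed. Your fallback $(\int g_1f)^p\le(\int g_1)^{p-1}\int g_1f^p$ only yields $\exp\bigl(O_p\bigl((\int g_1)^p+\int K\bigr)\bigr)$. The linear dependence on $\int g_1$ is exactly what Proposition~\ref{prop:4.2} needs: there $\int g_1\lesssim 1+(\log A)^{-1}\log(t/t')$ may only cost a factor $(t/t')^{\epsilon}$.

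Two repairs make the argument go through.

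\textbf{Weights in Step 2.} Your weights go the wrong way. With $s^{\frac12-\frac1p}$ on the kernel side, the kernel factor scales like $t^{1-\frac2p}$. Also $s^{-\frac p2+1}g_2^p=s^{-(p-2)}(s^{\frac12}g_2)^{p-2}g_2^2$, not $(s^{\frac12}g_2)^{p-2}g_2^2$. Split instead as $(t-s)^{-\frac12}s^{-\frac12+\frac1p}\cdot\bigl(s^{\frac12-\frac1p}g_2f\bigr)$. Then $\int_0^t(t-s)^{-\frac q2}s^{-\frac{p-2}{2(p-1)}}\,ds$ is a Beta constant independent of $t$: both exponents are $<1$ because $p>2$, and this uses $t_0\ge 0$, which is implicit in the statement. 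The other factor is exactly $\bigl(\int_{t_0}^t Kf^p\bigr)^{1/p}$ with $K=(s^{\frac12}g_2)^{p-2}g_2^2$, so $\int K\le\|s^{\frac12}g_2\|_{L^\infty}^{p-2}\int g_2^2$.

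\textbf{Closing.} Work with $a\equiv a(T)$ and set $u=\int_{t_0}^t g_1f$, $w=\int_{t_0}^t Kf^p$, so that $f\le a+u+C_pw^{1/p}$. Put $Y=(a+u)^p+C_p^pw$. Then $f^p\le 2^{p-1}Y$ and $(a+u)^{p-1}\le Y^{(p-1)/p}$, so
\[
Y'=p(a+u)^{p-1}g_1f+C_p^pKf^p\le\bigl(2p\,g_1+2^{p-1}C_p^pK\bigr)Y,\qquad Y(t_0)=a^p .
\]
Classical Gr\"onwall then gives $f(T)\le 2Y(T)^{1/p}\le 2a(T)\exp\bigl(2\int_{t_0}^T g_1+O_p\bigl(\int_{t_0}^T K\bigr)\bigr)$, which is the claimed bound.
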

Now recall the heat estimate. 
\begin{lemm}
    For all $m \ge 0$, $s_1, s_2 \in \mathbb{R}$ satisfying $m + s_1 - s_2 \ge 0$, and $f \in \mathcal{C}^{s_2}$,
we have
\begin{align}\label{2.7}
    \|e^{t\Delta} \nabla^m f\|_{\mathcal{C}^{s_1}} \lesssim_{m,s_1,s_2} t^{-\frac{m+s_1-s_2}{2}} \|f\|_{\mathcal{C}^{s_2}}
\end{align}
where $\mathcal{C}^s$ is the homogeneous H\"{o}der-Zygmund space with norm
\begin{align*}
    \| u\|_{\mathcal{C}^s} := \sup_{N \in 2^\mathbb{N}} N^s \| P_N u \|_{L^\infty} < \infty.
\end{align*}
Moreover, the above estimate holds as well if $f$ on the left-hand side is replaced by $\mathbb{P}f$. We remark that for $s=m+\alpha$ with $m\in\mathbb{N}$ and $\alpha\in(0,1)$, $\mathcal{C}^s$ coincides with the standard homogeneous H\"{o}der space $C^s$.
\end{lemm}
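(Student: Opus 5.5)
This is the heat-semigroup estimate (\ref{2.7}). I will prove it by Littlewood--Paley decomposition, which is natural since $\mathcal{C}^s$ is defined through the projections $P_N$. On $\mathbb{T}^5$ the zero mode is excluded, either because the homogeneous norm only sees $N\ge1$ or because we work with mean-free $f$. So it suffices to control $N^{s_1}\|P_N e^{t\Delta}\nabla^m f\|_{L^\infty}$ uniformly in dyadic $N$.

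\textbf{Frequency-localized multiplier bound.} Write $P_N = P_N\widetilde P_N$, where $\widetilde P_N$ is a fattened projection whose symbol is supported on $|\zeta|\sim N$. The operator $e^{t\Delta}\nabla^m\widetilde P_N$ has symbol $e^{-t|\zeta|^2}(i\zeta)^{\otimes m}\tilde\chi(\zeta/N)$. Rescaling $\zeta=N\eta$, its kernel is $N^{5}K_{tN^2}(N\cdot)$, with symbol $N^m e^{-tN^2|\eta|^2}\eta^{\otimes m}\tilde\chi(\eta)$. This symbol is smooth and compactly supported in an annulus, and all its derivatives are bounded by $N^m e^{-ctN^2}$, uniformly in $tN^2$. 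Integrating by parts then gives an $L^1$ kernel bound on $\mathbb{R}^5$ of
\begin{equation*}
C_m N^m e^{-ctN^2}.
\end{equation*}
The bound survives periodization, since the $L^1(\mathbb{T}^5)$ norm of a periodized kernel is at most the $L^1(\mathbb{R}^5)$ norm. Young's inequality therefore yields
\begin{equation*}
\|P_N e^{t\Delta}\nabla^m f\|_{L^\infty}\lesssim N^m e^{-ctN^2}\|\widetilde P_N f\|_{L^\infty}\lesssim N^{m-s_2}e^{-ctN^2}\|f\|_{\mathcal{C}^{s_2}}.
\end{equation*}

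\textbf{Optimizing in $N$.} Multiplying by $N^{s_1}$, we need to bound $\sup_N N^{m+s_1-s_2}e^{-ctN^2}$. Since $m+s_1-s_2\ge0$, the function $x^{a}e^{-cx^2}$ with $a=m+s_1-s_2$ is bounded on $x\ge0$. Setting $x=t^{1/2}N$ gives
\begin{equation*}
N^{a}e^{-ctN^2}\lesssim_a t^{-a/2},
\end{equation*}
which is exactly (\ref{2.7}). When $a=0$ the bound is trivial.

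\textbf{Leray projector and identification with H\"older spaces.} For $\mathbb{P}f$, the symbol of $\mathbb{P}$ is homogeneous of degree $0$ and smooth away from the origin. Multiplying it by $\tilde\chi(\eta)$ keeps the rescaled symbol smooth and compactly supported with $N$-uniform bounds, so the same kernel estimate applies with $\mathbb{P}$ inserted. The identification $\mathcal{C}^s=C^s$ for non-integer $s$ is the classical Besov characterization $B^s_{\infty,\infty}=C^s$, which I would cite rather than reprove.

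The only mildly delicate point is the uniformity of the $L^1$ kernel bound with respect to the parameter $tN^2$. It holds because each derivative in $\eta$ of $e^{-tN^2|\eta|^2}$ produces factors $(tN^2)^j$, which are absorbed by $e^{-tN^2|\eta|^2/2}$ on the annulus.
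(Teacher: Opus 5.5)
Your proof is correct. Note that the paper gives no argument for this lemma: it is only recalled in the appendix as a standard heat estimate (from Coiculescu--Palasek), so there is no route of the paper's to compare against. What you wrote is the standard proof:
\begin{itemize}
\item an $L^1$ kernel bound $\lesssim N^m e^{-ctN^2}$ for $e^{t\Delta}\nabla^m$, or $e^{t\Delta}\nabla^m\mathbb{P}$, localized to the annulus $|\zeta|\sim N$, uniform in $tN^2$;
\item transfer of this bound to $\mathbb{T}^5$ by periodization;
\item the elementary bound $\sup_N N^{a}e^{-ctN^2}\lesssim_a t^{-a/2}$ for $a=m+s_1-s_2\ge 0$.
\end{itemize}
Your treatment of the zero mode is also needed: for $m=0$ the mean does not decay, but the seminorm does not see it.

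One minor simplification. Since $P_N=P_N\widetilde P_N$, you can let the annulus-localized operator act on $P_Nf$. This gives $N^{m-s_2}e^{-ctN^2}\|f\|_{\mathcal{C}^{s_2}}$ directly, so you do not need to bound $\|\widetilde P_N f\|_{L^\infty}$ through neighbouring blocks.
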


\smallskip
\noindent\textbf{Acknowledgments} This work was partially supported by the National Natural Science Foundation of China (No.12171493). 



\phantomsection
\addcontentsline{toc}{section}{\refname}
\bibliographystyle{abbrv} 
\bibliography{Reference}

\end{document}